\DeclareMathOperator{\inte}{int}
\def\cth{{\rm coth}\>}
\theoremstyle{plain}
\newtheorem{theorem}{Theorem}[section]
\newtheorem{lemma}[theorem]{Lemma}
\theoremstyle{definition}
\newtheorem{remark}[theorem]{Remark}  
\newtheorem{example}[theorem]{Example}
\numberwithin{equation}{section}
\title[Wigner- and Marchenko-Pastur-type limits for Jacobi processes]{Wigner- and Marchenko-Pastur-type limit
  theorems for Jacobi processes}
\author{Martin Auer, Michael Voit, Jeannette H.C. Woerner} 
\address{Fakult\"at Mathematik, Technische Universit\"at Dortmund,
          Vogelpothsweg 87,
          D-44221 Dortmund, Germany}
\email{martin.auer@math.tu-dortmund.de, michael.voit@math.tu-dortmund.de,\newline
 jeannette.woerner@math.tu-dortmund.de}
\subjclass[2010]{Primary 60F05; Secondary 60F15, 60B20, 60J60, 60K35,  70F10, 82C22 }
\keywords{Jacobi processes, interacting particle systems, Calogero-Moser-Sutherland models, 
 Jacobi polynomials,  $\beta$-Jacobi ensembles,
   semicircle laws, Marchenko-Pastur laws,
free convolution, Stieltjes transform.}
\begin{document}
\date{\today}

\begin{abstract}
  We study  Jacobi processes  $(X_{t})_{t\ge0}$  on the compact spaces  $[-1,1]^N$  and on the noncompact spaces $[1,\infty[^N$
      which are motivated by the Heckman-Opdam theory for the root systems of type BC and associated integrable particle systems.
      These processes
  depend on  three  positive parameters and degenerate in the freezing limit to solutions of
   deterministic dynamical systems. In the compact case, these models tend for $t\to\infty$
  to the distributions of the $\beta$-Jacobi ensembles and,
  in the freezing case, to  vectors consisting of  ordered zeros of 
  one-dimensional Jacobi polynomials.

  Representing these  processes by stochastic differential equations, we  derive almost sure analogues of Wigner's semicircle  and  Marchenko-Pastur
  limit laws for $N\to\infty$ for the empirical distributions of the $N$ particles on some local scale.
  We there allow for arbitrary initial conditions, which enter the limiting distributions via free convolutions.
  These results generalize corresponding stationary limit results in the compact case for
 $\beta$-Jacobi ensembles  and, in the deterministic case,  for 
the empirical distributions of the ordered zeros of Jacobi polynomials by Dette and Studden.
The results are also related to free limit theorems for multivariate Bessel processes, $\beta$-Hermite and  $\beta$-Laguerre
ensembles, and the asymptotic empirical distributions of the zeros of  Hermite and Laguerre polynomials for  $N\to\infty$.
\end{abstract}

\maketitle

\section{Introduction}

By classical results, the empirical distributions of $\beta$-Hermite, $\beta$-Laguerre, and $\beta$-Jacobi ensembles of
dimension $N$
tend for $N\to\infty$ to semicircle, Marchenko-Pastur  as well as  Kesten-McKay
and Wachter distributions respectively after suitable normalizations; see e.g.~\cite{CC, DN, J, RS, W} and references therein. 
Moreover, in the Hermite and Laguerre cases, there are dynamical versions of these results in terms of Bessel processes $(X_t^N)_{t\ge0}$
of dimension $N$ for the root systems of types A and B; see \cite{CGY, RV1} for the background on these processes. Namely,
let $\mu$ be some starting distribution on $\mathbb R$ or $[0,\infty[$, and let for $N\in\mathbb N$, $x_N$ be starting vectors in
    $\mathbb R^N$ such that the  empirical distributions of the components of the $x_N$ tend to $\mu$. If we consider the 
 Bessel processes $(X_t^N)_{t\ge0}$ with start in these points $x_N$, then under mild additional conditions and with an appropriate scaling, the 
 empirical distributions of the components of the $X_t^N$ tend for $N\to\infty$ almost surely weakly to measures $\mu_t$ for all $t\ge0$.
 In the  $\beta$-Hermite case, i.e., for  Bessel processes of type A, one has $\mu_t=\mu\boxplus\mu_{sc, 2\sqrt{t}}$, where
 $\mu_{sc, 2\sqrt{t}}$ denotes the semicircle distribution with radius $2\sqrt{t}$ and $\boxplus$ the usual additive free convolution;
 see Section 4.3 of \cite{AGZ} and \cite{VW1} for different approaches.
 Moreover, for  the  $\beta$-Laguerre case, i.e. for  Bessel processes of type B,
 there are corresponding results for $\mu_t$ in terms of Marchenko-Pastur distributions and a more complicated construction
 involving the usual additive free convolution in \cite{VW1}. This construction may also be described in terms of 
 the rectangular free convolutions of Benaych-Georges \cite{B1, B2}.
 Furthermore, these results for Bessel processes of types A and B can be transferred
 to stationary Ornstein-Uhlenbeck-type versions of these processes as indicated in the end of Section 2 of \cite{VW1}. For the background on stochastic analysis we recommend the monographs \cite{P, RW}, and\cite {AGZ, NS}
for free probability in our context.

 In this paper we  show that Ornstein-Uhlenbeck-type limit results also appear
 for certain $N$-dimensional Jacobi processes on $[-1,1]^N$  for $N\to\infty$. These
 Jacobi processes were introduced and studied from different points of views by  Doumerc
 \cite{Do}, Demni \cite{De2, De1},  Remling and R\"osler \cite{RR1, RR2}, and  \cite{V}.
 They depend on 3 parameters and may be described in different ways.
 One possibility, motivated by the theory of  special functions associated with root systems of Heckman and Opdam
 \cite{HS, HO}, is to describe these processes  as time-homogeneous diffusions
 on the alcoves
 $$\tilde A_N:= \{\theta\in [0,\pi]: \> 0\le \theta_N\le \ldots\le \theta_1\le \pi\}$$
 with the Heckman-Opdam Laplacians
$$
  L_{trig, k}f(\theta):= \Delta f(\theta)
 +  \sum_{i=1}^N\Biggl(k_1 \cot(\theta_i/2)+ 2k_2  \cot(\theta_i)\\
  +k_3\sum_{j: j\ne i} \Bigl(\cth(\frac{\theta_i-\theta_j}{2})+\cth(\frac{\theta_i+\theta_j}{2})\Bigr)\Biggr)f_{\theta_i}(\theta).
$$
  of type BC as generators with the  multiplicity parameters
  $k_1,k_2\in \mathbb R$, $k_3>0$ with $k_2\ge0$ and $k_1+k_2\ge 0$  where we assume reflecting boundaries.
It is convenient to transform the processes and their generators in the trigonometric form via the transform
$x_i:=\cos \theta_i$ ($i=1,\ldots,N$) into an algebraic form; see e.g. also  \cite{De2, V}.
We then obtain time-homogeneous diffusions on the alcoves
 $$A_N:=\{x\in\mathbb R^N: \> -1\le x_1\le \ldots\le x_N\le1\}$$
 with the algebraic Heckman-Opdam Laplacians
\begin{equation}\label{generator-algebraic}
 L_kf(x):= \sum_{i=1}^N (1-x_i^2)f_{x_i,x_i}(x)+ \sum_{i=1}^N\Biggl( -k_1-(1+k_1+2k_2)x_i
+2k_3\sum_{j: j\ne i} \frac{1-x_i^2}{x_i-x_j}\Biggr)f_{x_i}(x).
\end{equation}
 as generators with the  multiplicities $k_1,k_2,k_3$ with reflecting boundaries.
The eigenfunctions of the  $L_k$ may be described via Heckman-Opdam Jacobi polynomials,
and the transition probabilities of the Jacobi processes can
be expressed via series expansions in terms of these polynomials; see \cite{De2, RR1, RR2}.
On the other hand, these processes $(X_t=(X_{t,1}, \ldots,X_{t,N}))_{t\ge0}$
admit a description as a unique strong solution of the stochastic differential 
equation (SDE)
 \begin{equation}\label{SDE-alcove-k}
dX_{t,i} = \sqrt{2(1-X_{t,i}^2)}\> dB_{t,i} + \Bigl( -k_1-(1+k_1+2k_2)X_{t,i} +2k_3\sum_{j: j\ne i} \frac{1-X_{t,i}^2}{X_{t,i}-X_{t,j}}\Bigr)dt
 \end{equation}
for $i=1,\ldots,N$
with an $N$-dimensional Brownian motion $(B_t)_{t\ge0}$.
The paths of  $(X_t)_{t\ge0}$ are reflected on $\partial A_N$
and  we start in some point in the interior of $A_N$; see 
Theorem 2.1 of \cite{De2}. It is also possible to start the processes satisfying these SDEs on the boundary.
This is not shown precisely for this type of Jacobi processes on compact alcoves in the literature, but it may be shown in a similar way as for
multivariate Bessel  and  Jacobi processes on non-compact domains in \cite{GM, Sch1, Sch2}.

Following \cite{De2}, we introduce the parameters
\begin{equation}\label{parameter-change-k-p}
\kappa:=k_3>0, \quad q:= N-1+\frac{1+2k_1+2k_2}{2k_3}>N-1, \quad p:= N-1+\frac{1+2k_2}{2k_3}>N-1,
\end{equation}
and
rewrite (\ref{SDE-alcove-k}) as
\begin{align}\label{SDE-alcove}
dX_{t,i} &= \sqrt{2(1-X_{t,i}^2)}\> dB_{t,i} +\kappa\Bigl((p-q) +(2(N-1)-(p+q))X_{t,i}\notag\\
& \quad\quad\quad\quad\quad\quad \quad\quad\quad\quad\quad +
2\sum_{j: \>j\ne i}\frac{1-X_{t,i}^2}{X_{t,i}-X_{t,j}}\Bigr)dt\\
 &=
\sqrt{2(1-X_{t,i}^2)}\> dB_{t,i} +\kappa\Bigl((p-q) -(p+q)X_{t,i} +
2\sum_{j: \> j\ne i}\frac{1-X_{t,i}X_{t,j}}{X_{t,i}-X_{t,j}}\Bigr)dt
\notag
\end{align}
for $i=1,\ldots,N$ and $t>0$.
It is  known
 (see e.g.~\cite{De2, Do}) that  for
 $\kappa\ge 1$ and 
$p,q\ge N-1+2/\kappa$,
 the process does not meet  $\partial A_N^A$  almost surely.
 
It is useful, also to consider the transformed processes
$(\tilde X_{t}:=X_{t/\kappa})_{t\ge0}$ which satisfy
\begin{equation}\label{SDE-alcove-normalized}
d\tilde X_{t,i} = 
\frac{\sqrt 2}{\sqrt\kappa } \sqrt{1-\tilde X_{t,i}^2}\> d\tilde B_{t,i} +\Bigl((p-q) -(p+q)\tilde X_{t,i} +
2\sum_{j:\> j\ne i}\frac{1-\tilde X_{t,i}\tilde X_{t,j}}{\tilde X_{t,i}-\tilde X_{t,j}}\Bigr)dt 
\end{equation}
for $  i=1,\ldots,N$. For $\kappa=\infty$ and $p,q>N-1$, these SDEs with start in $x_0\in A_N$ degenerate to the ODE 
\begin{equation}\label{ODE_main}
	\begin{gathered}
		\frac{d}{dt}x_i(t)
		=(p-q)-(p+q)x_i(t)
			+2\sum_{j:\> j\neq i}\frac{1-x_i(t)
			x_j(t)}{x_i(t)-x_j(t)}\,,\quad i=1,\dots,N\,,\;t>0\,,
			\\
		x(0)=x_0\,.
	\end{gathered}
	\end{equation}
This ODE is interesting for itself and is closely related to the classical  one-dimensional Jacobi polynomials
$(P_N^{(\alpha,\beta)})_{N\ge 0}$ on $[-1,1]$ with the parameters
$$\alpha:=q-N>-1, \quad \beta:=p-N>-1.$$
These polynomials are orthogonal 
 w.r.t.~the weights $(1-x)^\alpha(1+x)^\beta$   on $]-1,1[$ as usual; see Ch.~4 of  \cite{S}.
    All essential informations about   \eqref{ODE_main} are collected in the following theorem which will
    be proved in the appendix in Section 6:

  \begin{theorem}\label{main-solutions-ode}
          Let $N\in\mathbb{N}$ and $p,q> N-1$. Then for each
		 each $x_0\in A_N$ the ODE \eqref{ODE_main} has a unique 
		 solution $x(t)$ for all $t\geq0$ in the following sense:
                 If $x_0$ is in the interior of $A_N$, then  $x(t)$ exists  also in the interior of $A_N$ for all $t\geq0$.
                 Moreover, for $x_0\in\partial A_N$, 
		there is a unique continuous function $x:[0,\infty)\to A_N$ with $x(0)=x_0$ and
		  $x(t)$ being in the interior of $A_N$ for $t>0$ such that $x(t)$ satisfies \eqref{ODE_main} for $t>0$.
                  
		Furthermore, for each  $x_0\in A_N$, the solution satisfies
		$\lim_{t\to\infty}x(t)=z$ where the coordinates of the vector $z$ in the interior of $A_N$ are
                the  ordered roots of the Jacobi polynomial $P_N^{(q-N,p-N)}$.
                This vector $z$ is the only stationary solution of  \eqref{ODE_main} in $A_N$.
	\end{theorem}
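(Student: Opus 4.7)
My plan rests on recognising equation \eqref{ODE_main} as the gradient flow of a strictly convex Stieltjes-type energy with respect to a Riemannian metric adapted to the alcove. Writing $\alpha := q-N > -1$ and $\beta := p-N > -1$, introduce
\begin{equation*}
E(x) := -(\alpha+1)\sum_{i=1}^N \log(1-x_i) - (\beta+1)\sum_{i=1}^N \log(1+x_i) - 2\sum_{i<j}\log(x_j - x_i)
\end{equation*}
on $\inte(A_N)$. Using the identity $(1-x_ix_j)/(x_i-x_j) = (1-x_i^2)/(x_i - x_j) + x_i$ together with the equivalent form of the drift already displayed in \eqref{SDE-alcove}, a short calculation shows that the right-hand side of \eqref{ODE_main} equals $-(1-x_i^2)\,\partial_{x_i} E(x)$. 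Hence \eqref{ODE_main} is the gradient flow of $E$ for the metric $\sum_i dx_i^2/(1-x_i^2)$, and every interior solution satisfies the Lyapunov inequality $\tfrac{d}{dt}E(x(t)) = -\sum_i (1-x_i(t)^2)(\partial_{x_i}E(x(t)))^2 \le 0$.

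For $x_0 \in \inte(A_N)$ the drift is real-analytic, so Picard--Lindel\"of gives a unique local solution. Since $\alpha+1, \beta+1 > 0$, the function $E$ blows up whenever $x$ approaches $\partial A_N$ (via a particle collision or via $x_i \to \pm 1$); the sublevel set $\{E \le E(x_0)\}$ is therefore a compact subset of $\inte(A_N)$, the Lyapunov inequality traps the trajectory in it, and global interior existence on $[0,\infty)$ follows. A direct computation of the Hessian yields
\begin{equation*}
v^\top \nabla^2 E(x)\, v = \sum_{i=1}^N v_i^2 \Bigl( \tfrac{\alpha+1}{(1-x_i)^2} + \tfrac{\beta+1}{(1+x_i)^2} \Bigr) + 2\sum_{i<j}\frac{(v_i - v_j)^2}{(x_j - x_i)^2} > 0
\end{equation*}
for all $v\ne 0$, so $E$ is strictly convex on $\inte(A_N)$ and admits a unique minimiser $z$.

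To identify $z$ with the Jacobi zeros, multiply $\nabla E(z) = 0$ by $1 - z_i^2$; using $2\sum_{k \neq i} 1/(z_i-z_k) = (P_N^{(\alpha,\beta)})''(z_i)/(P_N^{(\alpha,\beta)})'(z_i)$ and the Jacobi differential equation $(1-x^2)y'' + (\beta-\alpha - (\alpha+\beta+2)x)y' + N(N+\alpha+\beta+1)y = 0$ evaluated at a zero, this is precisely Stieltjes' classical electrostatic characterisation of the ordered zeros of $P_N^{(\alpha,\beta)} = P_N^{(q-N, p-N)}$. Since $E(x(t))$ is non-increasing, bounded below, and the trajectory is precompact in $\inte(A_N)$, LaSalle's invariance principle forces $x(t)\to z$ as $t\to\infty$, proving the interior part of the theorem.

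For the boundary case $x_0 \in \partial A_N$ I would approximate $x_0$ by interior points $x_0^{(n)}\to x_0$ and work with the corresponding global interior solutions $x^{(n)}$. The drift is strictly inward on each face: at $x_1 = -1$ it evaluates to $2(p-N+1)>0$ plus non-negative repulsive contributions, at $x_N = 1$ to $-2(q-N+1)<0$ by symmetry, and near a collision $x_i = x_{i+1}$ the term $2(1-x_ix_{i+1})/(x_i-x_{i+1})$ is singular and repulsive. Comparison arguments on the quantities $1+x_1^{(n)}(t)$, $1-x_N^{(n)}(t)$ and on the consecutive gaps $x_{i+1}^{(n)}(t) - x_i^{(n)}(t)$ yield quantitative lower bounds depending only on $t>0$ and not on $n$; Arzel\`a--Ascoli on compact subsets of $(0,\infty)$ then extracts a subsequential limit $x(t)$ in $\inte(A_N)$ solving \eqref{ODE_main} for $t>0$, with $x(t)\to x_0$ as $t\to 0+$ from $x^{(n)}(0)=x_0^{(n)}\to x_0$ and compactness of $A_N$. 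Uniqueness for the boundary problem reduces, via Picard--Lindel\"of applied at any $t_1>0$, to this continuity at $t=0$. The main obstacle is precisely the construction of these $n$-independent lower bounds that keep the limit strictly interior for every $t>0$; the remaining assertions are clean consequences of the gradient-flow structure, Stieltjes' theorem, and the Lyapunov/LaSalle argument.
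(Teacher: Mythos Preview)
Your treatment of the interior case is correct and essentially matches the paper: both recognise \eqref{ODE_main} as a gradient system, invoke Stieltjes' electrostatic characterisation of the Jacobi zeros, and conclude via a Lyapunov/LaSalle argument. The only cosmetic difference is that the paper first substitutes $x_i=\cos\tau_i$ to obtain a Euclidean gradient system in $\tau$, whereas you stay in the $x$--variables and absorb the Jacobian into the Riemannian metric $\sum_i dx_i^2/(1-x_i^2)$; these are equivalent.

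For $x_0\in\partial A_N$, however, your proposal has a genuine gap that you yourself flag: you have not actually produced the $n$--independent lower bounds on $1+x_1^{(n)}(t)$, $1-x_N^{(n)}(t)$, and the consecutive gaps. The pairwise comparison $\dot g\sim 4(1-c^2)/g$ is suggestive, but when several coordinates collide simultaneously, or a collision coincides with a boundary hit, the cross--interaction terms are of the same singular order and a clean differential inequality is not immediate. Your uniqueness argument is also incomplete: Arzel\`a--Ascoli only gives a subsequential limit, and ``continuity at $t=0$'' does not by itself force two interior solutions with the same boundary initial value to agree at any positive time, so Picard--Lindel\"of at $t_1>0$ cannot be invoked.

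The paper bypasses all of this with a different idea: it transforms \eqref{ODE_main} by the elementary symmetric polynomials $e_k^N(x)$. A direct computation shows that $t\mapsto(e_1^N(x(t)),\dots,e_N^N(x(t)))$ solves a \emph{linear, triangular} ODE, so for \emph{every} starting point in $e(A_N)$ (interior or boundary) there is a unique global solution, given explicitly as a finite linear combination of exponentials $e^{rt}$ with $r\le 0$. Since $e:A_N\to e(A_N)$ is a homeomorphism, uniqueness in $x$--coordinates is immediate. To see that a boundary start enters the interior instantly, they use the discriminant $D(x)=\prod_i(1-x_i^2)\prod_{i\ne j}(x_j-x_i)$, which, being symmetric, is a polynomial $\tilde D$ in the $e_k^N$; along any solution, $\tilde D(\tilde e(t))$ is therefore itself a finite sum of exponentials. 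If it vanished on some interval $[0,t_0]$ it would vanish for all $t\ge 0$, contradicting $\tilde e(t)\to e(z)\in e(\inte A_N)$, which they already know from the interior Lyapunov argument by continuous dependence on the initial data. This linearisation in symmetric coordinates is the key step you are missing; it replaces the delicate uniform gap estimates by an exact, globally defined flow.
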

	
  The stationary solution $z\in A_N$ in the deterministic case is the freezing limit for $\kappa\to\infty$ of the stationary
  distributions of the corresponding Jacobi processes with fixed parameters $p,q$; see e.g. \cite{HV} for more details. 
   These stationary distributions are just the distributions
  of the $\beta$-Jacobi
  (or  $\beta$-MANOVA) ensembles on $A_N$ having the Lebesgue densities
  \begin{equation}\label{weight-general}
    c(k_1,k_2,k_3)\cdot \prod_{i=1}^N (1-x_i)^{k_1+k_2-1/2}(1+x_i)^{k_2-1/2} \cdot \prod_{i<j}|x_i-x_j|^{2k_3} 
       \end{equation}
  with known Selberg-type norming constants $c(k_1,k_2,k_3)$.
  We here recapitulate that,  possibly
  after some affine linear-transformation and taking some cosine in all coordinates,
  these probability measures appear as the distributions of the ordered 
eigenvalues of 
the tridiagonal models in \cite{KN, K} and in some  log gas models on $[-1,1]$; see  \cite{F1}.
Moreover, for certain parameters, these distributions and the corresponding Jacobi processes have an interpretation
as invariant distributions and Brownian motions respectively on compact Grassmann manifolds of rank $N$ over the fields
$\mathbb F =\mathbb R, \mathbb C$ and the quaternions by the now classical connection between the Heckman-Opdam theory
and spherical functions; see  \cite{HO, HS, RR1, RR2, De2}. We also point out that, even more generally
for some parameters, these distributions and the corresponding Jacobi processes appear as the ordered eigenvalues of
matrices $B^*B$ for upper left blocks $B$ of size $M\times N$  of Haar distributed random variables and Brownian motions
in the unitary group $U(R,\mathbb F)$ respectively with the dimension parameters $R> M> N$; see \cite{Do, De2} for the details.

We now turn to the main content of this paper. We here follow the approach in \cite{VW1} for Bessel processes and derive several
almost sure limit theorems as $N\to\infty$ for the empirical distributions of
the rescaled Jacobi processes $(\tilde{X}_t^N)_{t\ge0}$ and their deterministic freezing limits for $\kappa=\infty$ which satisfy
the ODE  \eqref{ODE_main}, which are related in their flavour to mean field limits of Serfaty \cite{Se}.
Considering the three involved parameters $p,q,\kappa$, it will turn out that the limits depend on $\kappa$ only in a trivial way
while the parameters $p,q$, namely their dependence on $N$, lead  after suitable affine-linear transformations
to different limiting distributions. The different cases are motivated  by the stationary deterministic case,
where we just have the empirical distributions of
the classical Jacobi polynomials. In this setting several limiting regimes with semicircle,
Marchenko-Pastur and Wachter distributions were derived by Dette and Studden \cite{DS}. We thus follow their decomposition of the cases
and investigate the deterministic case with the ODE  \eqref{ODE_main} first.
For this we derive recurrence relations for the moments as well as PDEs for the  Stieltjes and the R-transforms  of
the empirical distributions of the solutions in a general setting in Section 2 for fixed dimensions;
see in particular Eq.~(\ref{stieltjes-pde}). This PDE can then be applied to the different regimes considered in  \cite{DS}.
We shall do this in Section 3 for two regimes where semicircle and Marchenko-Pastur distributions appear.
In the semicircle case we shall obtain the following result where $\mu_{sc,\tau}$ denotes the semicircle law with support $[-\tau,\tau]$
for $\tau\ge0$ and $\mu_{sc,0}=\delta_0$:

\begin{theorem}\label{semicirc_ODE_thm-intro}
  Consider sequences $(p_N)_{N\in \mathbb N},(q_N)_{N\in \mathbb N}\subset ]0,\infty[$
    with $\lim_{N\to\infty}p_N/N=\infty$ and $\lim_{N\to\infty}q_N/N=\infty$ such that
 $C:=\lim_{N\to\infty}p_N/q_N\geq0$ exists. Define
  $$a_N:=\frac{q_N}{\sqrt{Np_N}}, \quad b_N:=\frac{p_N-q_N}{p_N+q_N} \quad(N\in\mathbb N).$$
    Let $\mu\in M^1(\mathbb {R})$ be a probability measure satisfying
    some moment condition (see Theorem \ref{semicirc_ODE_thm} for the details),
    and  let $(x_N)_{N\in\mathbb {N}}=((x_{1}^N,\dots,x_{N}^N))_{N\in\mathbb {N}}$   be a sequence of starting vectors
$x_N\in A_N$  such that all moments of the empirical measures
	\begin{equation*}
		\mu_{N,0}:=\frac{1}{N}\sum_{i=1}^N\delta_{a_N( x_i^N-b_N)}
	\end{equation*}
        tend  to those of $\mu$ for $N\to\infty$.
Let $x_N(t)$ be the solutions of the ODEs \eqref{ODE_main} with $x_N(0)=x_N$ for $N\in\mathbb N$.
	Then for all $t>0$, all moments of the empirical measures
	\begin{equation*}
		\mu_{N,t/(p_N+q_N)}
		=\frac{1}{N}\sum_{i=1}^N \delta_{a_N( x_i^N(t/(p_N+q_N))-b_N)}\end{equation*}
        tend to those of the probability measures
  $$ \mu_t  :=   (e^{-t}\mu)
    \boxplus\left(\sqrt{1-e^{-2t}}\mu_{sc,4(1+C)^{-3/2}}\right).$$
  This in particular implies that the	$\mu_{N,t/(p_N+q_N)}$ tend weakly to the $\mu_t$.
	\end{theorem}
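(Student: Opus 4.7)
The plan is to apply the Stieltjes transform PDE \eqref{stieltjes-pde} of Section~2 to the rescaled empirical measures and pass to the limit $N\to\infty$. Let $s := t/(p_N+q_N)$ denote the rescaled time and set $y_i(s) := a_N(x_i^N(s(p_N+q_N)) - b_N)$, so that $\mu_{N,s}$ is the empirical measure of the $y_i(s)$. The first observation is purely algebraic: with the given choice of $b_N$ one has $(p_N-q_N) - (p_N+q_N)b_N = 0$, so the constant drift in \eqref{ODE_main} is killed by the centring, and the linear drift $-(p_N+q_N)x_i$ becomes exactly $-y_i$ after the time rescaling. Consequently, the ODE transforms into
\begin{equation*}
\frac{d y_i}{d s} = -y_i + \frac{2a_N}{p_N+q_N}\sum_{j\neq i}\frac{1-x_i x_j}{x_i-x_j},
\end{equation*}
and the task reduces to analysing the rescaled interaction term.

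Substituting $1-x_i x_j = (1-b_N^2) - b_N(y_i+y_j)/a_N - y_i y_j/a_N^2$ and $x_i-x_j = (y_i-y_j)/a_N$ splits the interaction into three contributions. The leading one has prefactor $2a_N^2(1-b_N^2)/(p_N+q_N)$; by means of $1-b_N^2 = 4p_N q_N/(p_N+q_N)^2$ and $a_N^2 p_N q_N = q_N^3/N$ this equals $(8/N)(q_N/(p_N+q_N))^3$ and converges to $\tau/N$ with $\tau := 8/(1+C)^3$. The prefactors of the two remaining pieces, $2a_N b_N/(p_N+q_N)$ and $2/(p_N+q_N)$, are of order $1/\sqrt{Nq_N}$ and $1/(p_N+q_N)$ respectively and vanish as $N\to\infty$. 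Feeding this into \eqref{stieltjes-pde} produces, in the limit, the complex Burgers equation
\begin{equation*}
\partial_s G = -G - z\,\partial_z G + \tau\, G\,\partial_z G,
\end{equation*}
which is the Fokker-Planck equation of the free Ornstein-Uhlenbeck semigroup. Passing to the R-transform $R(z,s) = G^{-1}(z,s) - 1/z$ and solving by characteristics yields $R(z,s) = e^{-s}R_\mu(e^{-s}z) + (\tau/2)(1-e^{-2s})z$, which is precisely the R-transform of $(e^{-s}\mu) \boxplus \sqrt{1-e^{-2s}}\,\mu_{sc,r}$ with $r^2 = 2\tau = 16/(1+C)^3$, that is $r = 4(1+C)^{-3/2}$. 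This identifies the limit with the target measure $\mu_s$.

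The remaining task is to upgrade this formal identification to rigorous convergence of every moment. Multiplying the ODE for $y_i$ by $y_i^{k-1}$, summing in $i$, and performing the standard symmetrization of the Coulomb sum produces a closed recursion for the rescaled moments $\bar m_k^N(s) := \int y^k\,d\mu_{N,s}(y)$ of the form
\begin{equation*}
\frac{d}{ds}\bar m_k^N = -k\,\bar m_k^N + \frac{k\tau}{2}\sum_{l=0}^{k-2}\bar m_l^N\, \bar m_{k-2-l}^N + R_k^N(s),
\end{equation*}
with a remainder $R_k^N$ whose prefactors were just shown to vanish uniformly on compact $s$-intervals as $N\to\infty$. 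An induction on $k$ combined with a Gr\"onwall estimate in $s$ then yields $\bar m_k^N(s)\to \bar m_k(s)$, where $\bar m_k(s)$ satisfies the limiting recursion and therefore coincides with the $k$-th moment of $\mu_s$; the moment hypothesis on $\mu$ in the detailed version of Theorem~\ref{semicirc_ODE_thm} supplies the uniform-in-$N$ bound at $s=0$ required to start the induction. The main technical obstacle is that the vanishing remainder $R_k^N$ involves sums such as $\sum_{j\neq i}(y_i+y_j)/(y_i-y_j)$ which are not directly dominated by moments of $\mu_{N,s}$; a careful algebraic rewrite, analogous to the symmetrization already carried out in Section~2, is needed to express these sums as polynomial combinations of the $\bar m_k^N$ before the smallness of their coefficients can be exploited in the Gr\"onwall loop.
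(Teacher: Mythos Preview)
Your approach is essentially the same as the paper's: both use the moment recursion \eqref{moment-ODE1}--\eqref{moment-ODE2} of Section~2, pass to the limit $N\to\infty$ by induction on the moment order, and identify the limit via the R-transform PDE \eqref{semicirc_R_PDE} solved by characteristics. The paper merely presents the steps in the opposite order---moment convergence first (to the recursion \eqref{recurrence-wigner-limit}), then Carleman, then identification---and the ``technical obstacle'' you flag concerning sums like $\sum_{j\ne i}(y_i+y_j)/(y_i-y_j)$ is exactly what the symmetrizations leading to \eqref{moment-ODE1} already resolve: every remainder term is a polynomial in the $S_{N,k}$ with explicit coefficients, so the induction goes through directly without an additional Gr\"onwall loop. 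The one step you omit is the verification of the Carleman condition \eqref{Carleman} for the limiting moment sequence $(S_l(t))_l$, which the paper supplies via the bound \eqref{ode_semicirc_pf_eq1}; without it, moment convergence does not by itself give weak convergence to a uniquely determined $\mu_t$.
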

	
We point out that Theorem \ref{semicirc_ODE_thm-intro} is a local limit theorem on the behaviour of the particle systems
with $N$ particles around the starting points $b_N\in]-1,1[$ for small times on the space scale $1/a_N$ for large $N$.
We also mention the slightly astonishing fact that this  local result preserves the asymptotic stationarity of the global systems.
In fact there are local limit results on different time and space scales in Section 3 where this                   
 asymptotic stationarity does not appear; see e.g.~ Theorem \ref{semicirc_ODE_thm2}. Besides these two results and further
  variants with Wigner-type  limits in Section 3, we shall
 also derive   local limit results with Marchenko-Pastur type limits in neighbourhoods
 of the boundary points $\pm 1$ in Section 3;
see for instance  Theorem \ref{MP_ODE_thm}
below. In the proof of this theorem 
 we again solve
 the associated PDE for the R-transforms explicitly. We point out that a modification of this PDE in the
 Marchenko-Pastur setting appears also in \cite{CG}.

 There are further limit regimes where Kesten-MacKay and Wachter distributions are involved, and which are also motivated by \cite{DS}
 and corresponding  limit results for $\beta$-Jacobi ensembles e.g.~in \cite{DN, W}.
In these cases it can be also shown that under corresponding conditions on the initial conditions, the empirical measures
$\mu_{N,t/(p_n+q_N)}$ also converge to some probability measures $\mu_t$ for $t\ge0$.
However, the details of the description of the limits are more involved here and will be published in the future separately.

The results  of Section 3 on the compact, deterministic case will be extended in Section 4 
to almost sure versions for Jacobi processes with fixed parameter $\kappa$ in the compact setting.
It turns out the limiting distributions stay the same for the rescaled processes.
Hence as for Bessel processes the form of the limiting distribution is already determined by the frozen process.

Furthermore, in Section 5 we  transfer some of our Wigner- and Marchenko-Pastur type results
in the Sections 2-4 to a noncompact setting. For some parameters, these results have interpretations in terms of
Brownian motions on the noncompact Grassmann manifolds over $\mathbb R, \mathbb C$, and the quaternions.
It will turn out that in these hyperbolic cases,
some results remain valid up to some kind of time inversion. However, it seems that here no analogue to the stationary
results like Theorem \ref {semicirc_ODE_thm-intro} are available, as the the initial conditions do not fit 
to the conditions on the parameters $p_N,q_N, a_N, b_N$ in this theorem.
Finally, as mentioned above, we  prove Theorem \ref{main-solutions-ode} and
its noncompact analogue  in Section 6.

\section{Moments of the empirical distributions in the deterministic case}

In this section we study  the  solutions $x^N(t)$ of the ODEs \eqref{ODE_main} for suitable
initial conditions $x_0^N\in A_N$ for
$N\in\mathbb N$
and suitable parameters $p=p_N, q=q_N>N-1$ where we are interested in the case $N\to\infty$ which implies that also
$p=p_N, q=q_N\to\infty$ holds. It will turn out that there are several limit regimes for the empirical measures
$$\frac{1}{N}(\delta_{x^N_1(t)}+\ldots+\delta_{x^N_N(t)})\in M^1([-1,1])$$
for $N\to\infty$ and all $t\ge0$ under the condition that a corresponding limit holds for the initial conditions at time $t=0$.
For some of these limit results we have to transform the data in an affine-linear way in all coordinates depending on $N$.
For this we introduce suitable  sequences $(a_N)_{N\in\mathbb N }\subset]0,\infty[$ and
	$(b_N)_{N\in\mathbb N}\subset\mathbb R $ which will be specified later in several specific situations.
                  We consider the transformed solutions $\tilde{x}^N(t)=(\tilde{x}^N_{i}(t),\ldots, \tilde{x}^N_{N}(t))$ with 
                  $$\tilde{x}^N_{i}(t):=a_N(x^N_{i}(t)-b_N)      \quad(1\le i\le N)$$
                  as well as the transformed empirical distributions
\begin{equation}
  \mu_{N,t}:=\frac{1}{N}(\delta_{\tilde x^N_1(t)}+\ldots+\delta_{\tilde x^N_N(t)})=
  \frac{1}{N}(\delta_{a_N(x_1^N(t)-b_N)}+\ldots+\delta_{a_N(x_N^N(t)-b_N)}).
	\end{equation}
In order to determine possible weak limits of the measures $\mu_{N,t}$, we shall study the moments
\begin{equation}
		S_{N,l}(t)
		:=\int_{[-1,1]}y^l\,d\mu_{N,t}(y)
		=\frac{a_N^{l}}{N}\sum_{i=1}^N(x^N_{i}(t)-b_N)^l
		=\frac{1}{N}\sum_{i=1}^N\tilde{x}^N_{i}(t)^l\,,
	\end{equation}
of these measures for 	$l\in\mathbb N_0$, $t\ge0$, and $N\in\mathbb N$.
In particular we have $S_{N,0}\equiv 1$.
To study higher moments, we rewrite the  ODE \eqref{ODE_main} as an ODE  for $\tilde{x}^N_{i}$ by
	\begin{align}\label{ODE-tilde-x}
		\frac{d}{dt}\tilde{x}_i^N(t)
		=&a_N(p-q-b_N(p+q))-(p+q)\tilde{x}_i^N(t)\\
		&	+2\sum_{j: j\ne i}\frac{a_N^2(1-b_N^2)
			-a_Nb_N(\tilde{x}_i^N(t)+\tilde{x}^N_j(t))
			-\tilde{x}_i^N(t)\tilde{x}^N_j(t)}
			{\tilde{x}_i^N(t)-\tilde{x}^N_j(t)}
	\notag\end{align}
	where we always agree  that a summation
        over $j: j\ne i$ means that we sum over all  $j\ne i$ from $1$ to $N$.
	In the following we  also suppress the dependence of $S_{N,l}$ and $\tilde{x}^N$ on $t$.
 (\ref{ODE-tilde-x}) yields the following  ODEs for the $S_{N,l}$ for  $l\in\mathbb N$:
	\begin{align}
		\frac{d}{dt}S_{N,l}
		&=\frac{l }{N}\sum_{i=1}^n\left(\tilde{x}_i^N\right)^{l-1}
			\left(\frac{d}{dt}\tilde{x}_i^N\right)\\		
		        &=\frac{l}{N}
\Biggl[ a_N(p-q-b_N(p+q))N \cdot  S_{N,l-1} -(p+q)N\cdot S_{N,l} \notag\\
			&\quad\quad
  +2\sum_{i,j: \> i\neq j} \frac{(a_N^2(1-b_N^2)-\tilde{x}_i^N\tilde{x}^N_j)\left(\tilde{x}_i^N\right)^{l-1}  - b_Na_N(\left(\tilde{x}_i^N\right)^l+\left(\tilde{x}_i^N\right)^{l-1}\tilde{x}^N_j)}{\tilde{x}_i^N-\tilde{x}^N_j}\Biggr]\,.
	\notag
	\end{align}
        In particular, for $l=1$,
     \begin{equation}   \frac{d}{dt}S_{N,1}=a_N(p-q-b_N(p+q))-(p+q)S_{N,1}.\end{equation}
	Moreover, for $l\geq2$ we first observe that
	\begin{align}
		2&\sum_{i,j: \> i\neq j}(a_N^2(1-b_N^2)-\tilde{x}_i^N\tilde{x}_j^N)
				\frac{\left(\tilde{x}_i^N\right)^{l-1}}{\tilde{x}_i^N-\tilde{x}_j^N}
		=2\sum_{i,j: \> i< j}(a_N^2(1-b_N^2)-\tilde{x}_i^N\tilde{x}_j^N)
				\frac{\left(\tilde{x}_i^N\right)^{l-1}-\left(\tilde{x}_j^N\right)^{l-1}}{\tilde{x}_i^N-\tilde{x}_j^N}\notag\\
		&=\sum_{k=0}^{l-2}\sum_{i,j: \> i\neq j}(a_N^2(1-b_N^2)-\tilde{x}_i^N\tilde{x}_j^N)\left(\tilde{x}_i^N\right)^k\left(\tilde{x}^N_j\right)^{l-2-k}\notag\\
		&=a_N^2(1-b_N^2)\Biggl(N^2\sum_{k=0}^{l-2}S_{N,k}S_{N,l-2-k}-(l-1)NS_{N,l-2}\Biggr)\notag\\
		&\quad\quad-N^2\sum_{k=0}^{l-2}S_{N,k+1}S_{N,l-1-k}+(l-1)NS_{N,l}\,.
\notag	\end{align}
	Furthermore, with the usual convention for empty sums, 
	\begin{align}
	2&\sum_{i,j: \> i\neq j}\frac{\left(\tilde{x}_i^N\right)^l+\left(\tilde{x}_i^N\right)^{l-1}\tilde{x}_j^N}
				{\tilde{x}_i^N-\tilde{x}_j^N}=
	2\sum_{i,j=1: \> i<j}\frac{\left(\tilde{x}_i^N\right)^l-\left(\tilde{x}^N_j\right)^l}{\tilde{x}_i^N-\tilde{x}_j^N}
		+2\sum_{i,j: \> i<j}\tilde{x}_i^N\tilde{x}_j^N\frac{\left(\tilde{x}_i^N\right)^{l-2}
		-\left(\tilde{x}_j^N\right)^{l-2}}
			{\tilde{x}_i^N-\tilde{x}_j^N}\notag\\
	=&\sum_{k=0}^{l-1}\sum_{i,j: \> i\neq j}\left(\tilde{x}_i^N\right)^k\left(\tilde{x}_j^N\right)^{l-1-k}
		+\sum_{k=0}^{l-3}\sum_{i,j: \> i\neq j}\left(\tilde{x}_i^N\right)^{k+1}\left(\tilde{x}_j^N\right)^{l-2-k}\notag\\
	=&N^2\sum_{k=0}^{l-1}S_{N,k}S_{N,l-1-k}-NlS_{N,l-1} +N^2\sum_{k=0}^{l-3}S_{N,k+1}S_{N,l-2-k}-N(l-2)S_{N,l-1}\notag\\
	=&N^2\sum_{k=0}^{l-1}S_{N,k}S_{N,l-1-k} +N^2\sum_{k=0}^{l-3}S_{N,k+1}S_{N,l-2-k}-2N(l-1)S_{N,l-1}\notag\\
	=&N^2\left[\sum_{k=0}^{l-2}S_{N,k}S_{N,l-1-k}+S_{N,l-1}
		+\sum_{k=0}^{l-2}S_{N,k+1}S_{N,l-2-k}-S_{N,l-1}\right]-2N(l-1)S_{N,l-1}\notag\\
	=&2N^2\sum_{k=0}^{l-2}S_{N,k}S_{N,l-1-k}-2N(l-1)S_{N,l-1}\,.
\notag	\end{align}
Therefore, for $l\geq2$, and $p=p_N,q=q_N$,
	\begin{align}\label{moment-ODE1}
		\frac{d}{dt}S_{N,l}
		=l&\Bigl[(p-q-b_N(p+q-2(l-1)))a_N S_{N,l-1}-(p+q-(l-1))S_{N,l}\notag\\
                  &-a_N^2(1-b_N^2)(l-1)S_{N,l-2}
		+Na_N^2(1-b_N^2)\sum_{k=0}^{l-2}S_{N,k}S_{N,l-2-k}\notag\\&-N\sum_{k=0}^{l-2}S_{N,k+1}S_{N,l-1-k}
		-2a_Nb_N N\sum_{k=0}^{l-2}S_{N,k}S_{N,l-1-k}\Bigr]\,.
                	\end{align}
In summary,  we have the  recursion (\ref{moment-ODE1}) together with
        \begin{equation}\label{moment-ODE2}
		\frac{d}{dt}S_{N,0}\equiv 0,\quad\quad
		\frac{d}{dt}S_{N,1}=-(p+q)S_{N,1}+a_N(p-q-b_N(p+q)).
	        	\end{equation}
In the next step we consider the Cauchy transforms of the measures $\mu_{N,t}$. For this we recapitulate that for
        $\mu\in M^1(\mathbb R)$ the Cauchy transform is given by
$$G_{\mu}(z):=\int_{\mathbb R}\frac{1}{z-x}\,d\mu(x) \quad\quad
	(z\in\{z\in\mathbb C\colon\,\Im(z)>0\}).$$
	 We set
	$G^N(t,z):=G_{\mu_{N,t}}(z)$. For $\lvert z\rvert$ sufficiently large we can write
	$G^N$ as
	\begin{equation}\label{series-cauchy}
		G^N(t,z)=\sum_{l=0}^{\infty}z^{-(l+1)}S_{N,l}(t)\,.
	\end{equation} 
	We now consider the partial derivatives  $G_t^N(t,z):=\partial_t G^N(t,z)$ and
	$G_z^N(t,z):=\partial_z G^N(t,z)$ and similarly for higher orders.
	(\ref{series-cauchy}) thus  leads to
	\begin{equation}\label{series-cauchy2}
		G^N_t(t,z)=\sum_{l=0}^{\infty}z^{-(l+1)}\frac{d}{dt}S_{N,l}(t)
		=\sum_{l=1}^{\infty}z^{-(l+1)}\frac{d}{dt}S_{N,l}(t)\,.
	\end{equation}
	We now  calculate this series by using  (\ref{moment-ODE1}) and (\ref{moment-ODE2}). For this we
use the following equations:
	\begin{align}\label{partial1}
		-\sum_{l=1}^{\infty}z^{-(l+1)}l(p+q-(l-1))S_{N,l}=&-(p+q)\sum_{l=1}^{\infty}z^{-(l+1)}lS_{N,l}+\sum_{l=1}^{\infty}z^{-(l+1)}l(l-1)S_{N,l}\\
		=&(p+q)zG^N_z(t,z)+(p+q)G^N(t,z)+\partial_{zz}\left(z^2G^N(t,z)\right)\,,\notag
                \end{align}
	\begin{align}\label{partial2}
		\sum_{l=1}^{\infty}lz^{-(l+1)}a_N(p-q-&b_N((p+q)-2(l-1)))S_{N,l-1}\\=&-a_N(p-q-b_N(p+q))G^N_z(t,z)+2a_Nb_N\partial_{zz}\left(zG^N(t,z)\right)\,,\notag
                \end{align}
	\begin{equation}\label{partial3}
		-\sum_{l=2}^{\infty}z^{-(l+1)}l(l-1)S_{N,l-2}=-G_{zz}^N(t,z), \quad
		\sum_{l=2}^{\infty}z^{-(l+1)}lN\sum_{k=0}^{l-2}S_{N,k}S_{N,l-2-k}=-2NG^N(t,z)G^N_z(t,z)\,,
	\end{equation}
	\begin{equation}\label{partial5}
		-\sum_{l=2}^{\infty}z^{-(l+1)}lN\sum_{k=0}^{l-2}S_{N,k+1}S_{N,l-1-k}=2N(z^2G^N(t,z)G^N_z(t,z)+z(G^N(t,z))^2-zG^N_z(t,z)-G^N(t,z))\,,
	\end{equation}
	and
	\begin{align}\label{partial6}
		-\sum_{l=2}^{\infty}z^{-(l+1)}l\sum_{k=0}^{l-2}&S_{N,k}S_{N,l-1-k}
		=\partial_z\left[\sum_{l=2}^{\infty}z^{-l}\sum_{k=0}^{l-2}S_{N,k}S_{N,l-1-k}\right]\notag\\
		=&\partial_z\left[\sum_{l=1}^{\infty}z^{-(l+1)}\sum_{k=0}^{l-1}S_{N,k}S_{N,l-k}\right]\notag\\
		=&\partial_z\left[\sum_{l=1}^{\infty}z^{-(l+1)}\sum_{k=0}^{l}S_{N,k}S_{N,l-k}-\sum_{l=1}^{\infty}z^{-(l+1)}S_{N,l}\right]\notag\\
		=&\partial_z\left[\sum_{l=0}^{\infty}z^{-(l+1)}\sum_{k=0}^lS_{N,k}S_{N,l-k}-z^{-1}-G^N+z^{-1}\right]\notag\\
		=&\partial_z\left[z(G^N)^2-G^N\right]
		=(G^N)^2+2zG_z^NG^N-G_z^N\,.
	\end{align}
	If we combine (\ref{partial1})-(\ref{partial6}) with (\ref{moment-ODE1}), (\ref{moment-ODE2}),  and  (\ref{series-cauchy2}), we finally obtain 
the  PDE
	\begin{align}\label{stieltjes-pde}
		G_t^N&(t,z)\\
		=&(p+q)zG^N_z(t,z)+(p+q)G^N(t,z)+\partial_{zz}\left(z^2G^N(t,z)\right)-a(p-q-b(p+q))G^N_z(t,z)\notag\\
			&+2ab\partial_{zz}\left(zG^N(t,z)\right)-(1-b^2)a^2G_{zz}^N(t,z)-2Na^2(1-b^2)G^N(t,z)G^N_z(t,z)\notag\\
		&+2N\left[z^2G^N(t,z)G^N_z(t,z)+z(G^N(t,z))^2-zG^N_z(t,z)-G^N(t,z)\right]\notag\\
		&+2bNa((G^N)^2+2zG_z^NG^N-G_z^N)\,\notag
	\end{align}
        for the Cauchy transforms $G^N(t,z)$ of the  measures $\mu_{N,t}$.
        This PDE can be used  to derive limit theorems for the $\mu_{N,t}$ under different assumptions on the parameters
        $p=p_N,q=q_N, a_N, b_N$
        for $N\to\infty$ and $t\ge0$. We present  such limit results in the next section where in the limit roughly
        free sums of the limit initial distributions with
        Wigner- and Marchenko-Pastur distributions appear.

\section{Wigner- and Marchenko-Pastur-type limit theorems in the deterministic case}

In this section we study several conditions  on the parameters $p_N,q_N, a_N, b_N$ above leading to
limit results for the  measures  $\mu_{N,t}$ which involve semicircle and Marchenko-Pastur distributions.
In both cases, we  consider $a_N\to\infty$ which implies that we must work possibly with measures with noncompact supports.
We thus need some condition on the moments.
We recapitulate e.g.~from \cite{A} that a probability measure $\mu\in M^1(\mathbb {R})$ satisfies the
	{\it Carleman condition} if  the moments $c_l=\int_{\mathbb {R}}x^l\,d\mu(x)$
	($l\in\mathbb {N}$), of $\mu$ satisfy
	\begin{equation}\label{Carleman}
		\sum_{l=1}^{\infty}c_{2l}^{-\frac{1}{2l}}=\infty\,.
	\end{equation}
 By  \cite{A},  a probability measure with the Carleman condition is determined uniquely by its moments.

We also recapitulate the  R-transform  of $\mu\in M^1(\mathbb {R})$ from \cite{AGZ}, which is given by
        	$R_{\mu}(z):=\sum_{n=0}^{\infty}k_{n+1}(\mu)z^n$ with the  $n$-th free
cumulants $k_n(\mu)$ of $\mu$. It is related to the Cauchy transform by
\begin{equation}\label{R-cauchy-connection}
  R_{\mu}(G_{\mu}(z))=z-1/G_{\mu}(z).
\end{equation}
Furthermore, the  R-transform satisfies
 $R_{\mu\boxplus\nu}=R_{\mu}+R_{\nu}$ for  $\mu,\nu\in M^1(\mathbb {R})$ for
 the free additive convolution $\boxplus$.

We shall also use the following notation:
 We denote the image of some probability measure 
$\mu\in M^1(\mathbb R)$ under some continuous mapping $f$ by  $f(\mu)$. We use this notation in particular for the maps
 $x\mapsto|x|$ and  $x\mapsto x^2$ and write $|\mu|$ and $\mu^2$. Moreover, for a constant  $v\in\mathbb {R}\setminus\{0\}$
 let 	$v\mu$ the image of $\mu$ under the map $x\mapsto vx$.
Finally, for a probability measure $\mu$ on $[0,\infty[$, let 
 $\mu_{even}$ the unique even  probability measure on $\mathbb R$ with $|\mu_{even}|=\mu$.

    With these notations we have
	$G_{v\mu}(z)=v^{-1}G_{\mu}\left(z/v\right)$ and thus, by (\ref{R-cauchy-connection}),
	\begin{equation}\label{R-scaling}
		R_{v\mu}(z)=vR_{\mu}(vz).
	\end{equation}

        We now turn to the first limit case where semicircle laws $\mu_{sc,\lambda}\in M^1(\mathbb R)$ with radius $\lambda>0$
        appear. We recapitulate that the Wigner law $\mu_{sc,\lambda}$ with radius $\lambda>0$
 	has the Lebesgue  density
	$$\frac{2}{\pi \lambda^2}\sqrt{\lambda^2-x^2}{\bf 1}_{[-\lambda,\lambda]}(x).$$
        It is well-known  that $R_{\mu_{sc,\lambda}}(z)=\frac{\lambda^2}{4}z$; see Section 5.3 of \cite{AGZ}.
We have the following first result:

\begin{theorem}\label{semicirc_ODE_thm}
  Consider sequences $(p_N)_{N\in \mathbb N},(q_N)_{N\in \mathbb N}\subset ]0,\infty[$
    with $\lim_{N\to\infty}p_N/N=\infty$ and $\lim_{N\to\infty}q_N/N=\infty$ such that
 $C:=\lim_{N\to\infty}p_N/q_N\geq0$ exists. Define
  $$a_N:=\frac{q_N}{\sqrt{Np_N}}, \quad b_N:=\frac{p_N-q_N}{p_N+q_N} \quad(N\in\mathbb N).$$
Let $\mu\in M^1(\mathbb {R})$ be a probability measure such that its  moments $c_l$  satisfy
	$\lvert c_l\rvert\leq(\gamma l)^l$ for $l\in\mathbb {N}_0$ with some constant $\gamma>0$.
  Moreover, let $(x_N)_{N\in\mathbb {N}}=((x_{1}^N,\dots,x_{N}^N))_{N\in\mathbb {N}}$   be a sequence of starting vectors
$x_N\in A_N$  such that all moments of the empirical measures
	\begin{equation*}
		\mu_{N,0}:=\frac{1}{N}\sum_{i=1}^N\delta_{a_N( x_i^N-b_N)}
	\end{equation*}
        tend  to those of $\mu$ for $N\to\infty$.
Let $x_N(t)$ be the solutions of the ODEs \eqref{ODE_main} with $x_N(0)=x_N$ for $N\in\mathbb N$.
	Then for all $t>0$, all moments of the empirical measures
	\begin{equation*}
		\mu_{N,t/(p_N+q_N)}
		=\frac{1}{N}\sum_{i=1}^N \delta_{a_N( x_i^N(t/(p_N+q_N))-b_N)}\end{equation*}
        tend to those of the probability measures
  \begin{equation}\label{limit-semi-main}
  	\mu_t  :=   (e^{-t}\mu)
			\boxplus\left(\sqrt{1-e^{-2t}}\mu_{sc,4(1+C)^{-3/2}}\right).
	\end{equation}
\end{theorem}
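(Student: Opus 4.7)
The plan is to rigorously analyze the time-rescaled moment ODEs from Section 2 in the limit $N \to \infty$, and to identify the limit with the stated free convolution using the $R$-transform machinery. Write $T_{N,l}(t) := S_{N,l}(t/(p_N+q_N))$ for the $l$-th moment of $\mu_{N,\,t/(p_N+q_N)}$; dividing the moment ODEs (\ref{moment-ODE1}) and (\ref{moment-ODE2}) by $p_N+q_N$ yields an ODE system for the $T_{N,l}$ whose coefficients one analyzes in the limit. The key algebraic simplification is the identity $b_N(p_N+q_N) = p_N - q_N$: it makes the inhomogeneity in (\ref{moment-ODE2}) vanish exactly, giving $T_{N,1}(t) = e^{-t} T_{N,1}(0)$, and in (\ref{moment-ODE1}) it collapses the coefficient $l(p-q-b_N(p+q-2(l-1)))a_N$ of $S_{N,l-1}$ to the lower-order expression $2l(l-1)\, a_N b_N$.

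The essential scaling observation is that
$$\frac{N a_N^2(1-b_N^2)}{p_N+q_N} \;=\; \frac{4 q_N^3}{(p_N+q_N)^3} \;\longrightarrow\; \frac{4}{(1+C)^3} \;=:\; K,$$
while every other coefficient in (\ref{moment-ODE1}) divided by $p_N+q_N$ tends to $0$ under the assumptions $p_N/N,\, q_N/N \to \infty$; in particular $a_N b_N N/(p_N+q_N) \to 0$, $N/(p_N+q_N) \to 0$, and $a_N^2(1-b_N^2)/(p_N+q_N) \to 0$. Consequently the limit moments $m_l(t) := \lim_N T_{N,l}(t)$, if they exist, must satisfy the recursion
$$\frac{d}{dt} m_l(t) \;=\; -l\, m_l(t) \,+\, K l \sum_{k=0}^{l-2} m_k(t)\, m_{l-2-k}(t), \qquad m_l(0) = c_l, \qquad m_0 \equiv 1,$$
which is a linear inhomogeneous first-order ODE for $m_l$ once $m_0, \dots, m_{l-1}$ are known, hence solvable recursively.

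To identify the unique solution with the moments of $\mu_t$, it is cleanest to pass to the limit in the PDE (\ref{stieltjes-pde}) itself: the same coefficient analysis yields the limit PDE
$$H_t \;=\; zH_z + H - 2K\, H H_z$$
for the Cauchy transform $H(t,z)$. Solving by the method of characteristics along $\dot z = 2KH - z$, $\dot H = H$ with $H(0,z_0) = G_\mu(z_0)$ gives $H = e^t G_\mu(z_0)$ and $z_0 = e^t z - KH(e^t - e^{-t})$. Substituting $G_\mu^{-1}(w) = R_\mu(w) + 1/w$, which follows from (\ref{R-cauchy-connection}), and simplifying yields
$$z - \frac{1}{H(t,z)} \;=\; e^{-t} R_\mu\bigl(e^{-t} H(t,z)\bigr) \,+\, K(1-e^{-2t})\, H(t,z).$$
Since $R_{\mu_{sc,\lambda}}(w) = \lambda^2 w/4$ and (\ref{R-scaling}) gives $R_{v\nu}(w) = v R_\nu(vw)$, the right-hand side equals $R_{e^{-t}\mu}(H) + R_{\sqrt{1-e^{-2t}}\,\mu_{sc,\,4(1+C)^{-3/2}}}(H)$, so $H(t,z) = G_{\mu_t}(z)$ with $\mu_t$ as in (\ref{limit-semi-main}).

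The main technical obstacle is to make all of this rigorous at the level of moments. What is required is a uniform-in-$N$ bound of Carleman type, say $|T_{N,l}(t)| \le (\gamma'(T) l)^l$ on any compact interval $[0,T]$, so that (i) the moment limits $m_l(t)$ can be produced inductively in $l$ from the rescaled moment ODE, (ii) the bound passes to $\mu_t$, whence the Carleman criterion (\ref{Carleman}) upgrades moment convergence to weak convergence, and (iii) the formal series used in the PDE argument converges for $|z|$ sufficiently large so the characteristic analysis is legitimate. Such bounds follow from the initial hypothesis $|c_l| \le (\gamma l)^l$ and an induction on $l$ using (\ref{moment-ODE1}) together with a Gronwall estimate; the delicate part is to track how the combinatorial convolution sum affects the $l^l$-type growth and to verify that the bound is preserved under the dynamics, equivalently under free convolution with a compactly supported semicircle.
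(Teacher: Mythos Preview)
Your approach is essentially the same as the paper's: you rescale time by $p_N+q_N$, identify the limit recursion for the moments via the same coefficient analysis (the paper also arrives at \eqref{recurrence-wigner-limit}, i.e.\ your $m_l$-recursion with $K=4(1+C)^{-3}$), and then identifies the limit measure through the Cauchy/$R$-transform. The only cosmetic difference is that the paper first converts the limit Cauchy-transform PDE into the linear $R$-transform PDE \eqref{semicirc_R_PDE} and solves that directly, whereas you run characteristics on the Cauchy PDE and then read off the $R$-transform; the two computations yield the same formula $R(t,z)=e^{-t}R_\mu(e^{-t}z)+K(1-e^{-2t})z$.

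One small overcomplication: you state that a \emph{uniform-in-$N$} Carleman-type bound $|T_{N,l}(t)|\le(\gamma'(T)l)^l$ is ``required''. In fact the paper only proves the Carleman bound for the \emph{limit} moments $S_l(t)$ (see \eqref{ode_semicirc_pf_eq1}), and this suffices. The pointwise convergence $T_{N,l}(t)\to S_l(t)$ is obtained inductively in $l$ directly from the integral form \eqref{moment-ODE-wigner}: once the lower moments converge locally uniformly, the integrand is bounded and converges, so dominated convergence applies; no uniform Carleman control is needed at finite $N$. The interchange of limits and derivatives in the PDE step is handled by the Laurent-series argument (the paper cites \cite[Prop.~2.9]{VW1}), which again relies only on the Carleman bound for the limit.
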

	
\begin{proof}
  Using the recurrence relations \eqref{moment-ODE1}, \eqref{moment-ODE2} together with the initial conditions for $t=0$ and our choice of $b_N$,
  we see that the moments $\tilde S_{N,l}(t):=S_{N,l}(t/(p_N+q_N))$ of
  $\mu_{N,t/(p_N+q_N)}$ satisfy
 $$\tilde S_{N,0}\equiv 1, \quad
	        \tilde S_{N,1}(t)=e^{-t}S_{N,1}(0)$$
                and, for $l\ge 2$,
                \begin{align}\label{moment-ODE-wigner}	\tilde	S_{N,l}(t)&=
                  \exp\Bigl(\Bigl(-l+\frac{l(l-1)}{p_N+q_N}\Bigr)t\Bigr)\Biggl[S_{N,l}(0)\\
        &\quad+\frac{l}{p_N+q_N}\int_0^t \exp\Bigl(\Bigl(l-\frac{l(l-1)}{p_N+q_N}\Bigr)s\Bigr)\Biggl(
2a_Nb_N(l-1) \tilde S_{N,l-1}(s)\notag\\&\quad\quad\quad\quad\quad -(1-b_N^2)a_N^2(l-1) \tilde S_{N,l-2}(s)
 +Na_N^2(1-b_N^2)\sum_{k=0}^{l-2} \tilde S_{N,k}(s) \tilde S_{N,l-2-k}(s)\notag\\
&\quad\quad\quad\quad\quad-
N\sum_{k=0}^{l-2} \tilde S_{N,k+1}(s) \tilde S_{N,l-1-k}(s)
-2b_NNa_N\sum_{k=0}^{l-2} \tilde S_{N,k}(s) \tilde S_{N,l-1-k}(s)
\Biggr)ds\Biggr].
	        \notag	\end{align}
As the starting moments $S_{N,l}(0)$ ($l\ge0$) converge to the corresponding moments of $\mu$
for $N\to\infty$, we conclude by induction on $l$, that the 
$\tilde S_{N,l}(t)$ converge  to some  functions $S_l(t)$ for  $l\ge0$ and $t\ge0$. Moreover, these limits satisfy
\begin{equation}\label{recurrence-wigner-limit}
  S_0\equiv1,\quad
		S_1(t)=S_1(0)e^{-t}\,,\quad
		S_l(t)=e^{-lt}\left(S_l(0)+4l(1+C)^{-3}\int_0^te^{ls}\sum_{k=0}^{l-2}S_k(s)S_{l-2-k}(s)
		\,ds\right)
                \end{equation}
for $l\ge2$. We will now prove that the $S_l(t)$ satisfy the Carleman condition \eqref{Carleman} for $t>0$ so that,
	by the moment convergence theorem, there exist unique $\mu_t\in M^1(\mathbb {R})$ with
	$(S_l(t))_l$ as  sequences of moments. For this we show that there exists
	an $R>1$ such that $\lvert S_l(t)\rvert\leq (Rl)^l$ for all $t\ge0$ and
	$l\in\mathbb {N}_0$. Clearly this holds for $l\in\{0,1\}$ for $R$ sufficiently large.
	Moreover, by induction we have for $l\geq2$ and $t\ge0$ that
	\begin{equation}\label{ode_semicirc_pf_eq1}
	\begin{split}
		\lvert S_l(t)\rvert
		&\leq e^{-lt}\lvert S_l(0)\rvert+ e^{-lt}4l(1+C)^{-3}\int_0^t e^{ls}
		\sum_{k=0}^{l-2}\lvert S_k(s)\rvert\,\lvert S_{l-2-k}(s)\rvert\,ds\\
	&= e^{-lt}\lvert S_l(0)\rvert+ 4l(1+C)^{-3}\int_0^t e^{-ls}
		\sum_{k=0}^{l-2}\lvert S_k(t-s)\rvert\,\lvert S_{l-2-k}(t-s)\rvert\,ds\\
    &\leq (\gamma l)^l+4(1+C)^{-3}(Rl)^{l-2}
		\leq (\gamma l)^l+R^{l-2}l^l\,.
	\end{split}
	\end{equation}
	For $R$ large enough (depending on $\gamma$) we can bound the RHS of
	\eqref{ode_semicirc_pf_eq1} by $(Rl)^l$ as claimed. We thus see that
	$(S_l(t))_{l\in\mathbb {N}_0}$ satisfies the Carleman condition for  $t\geq0$.
        We thus conclude that the measures $\mu_{N,t/(p_N+q_N)}$ tend weakly to  some probability measures $\mu_t$.
        
	To identify the $\mu_t$ we employ a PDE for the corresponding Cauchy and R-transforms.
	We set
	\begin{equation*}
		G(t,z):=G_{\mu_t}(z)
		=\lim_{N\to\infty}G_{\mu_{N,t/(p_n+q_N)}}(z)\,.
	\end{equation*}
	We now use the PDEs \eqref{stieltjes-pde} and interchange derivatives w.r.t.~$t,z$ with the limits
        $N\to\infty$. This interchangeability can be proved via the Laurent series for $G,G_N$ as in Proposition 2.9 of \cite{VW1}.
        In this way we obtain
       that  $G$ satisfies the PDE
	\begin{equation*}
		G_t(t,z)=zG_z(t,z)+G(t,z)-8(1+C)^{-3}G(t,z)G_z(t,z)\,,\quad G(0,z)=G_{\mu}(z)\,.
	\end{equation*}
Using the	transformation rules
	\begin{align}\label{R-Cauchy}
		R(t,G(t,z))&=z-1/G(t,z)\\
		R_z(t,G(t,z))&=1/G_z(t,z)+1/G^2(t,z)\notag\\
		R_t(t,G(t,z))&=-G_t(t,z)/G_z(t,z)\,.\notag
	\end{align}
for the R-transforms $R(t,z):=R_{\mu_t}(z)$, we see that
	\begin{equation}\label{semicirc_R_PDE}
		R_t(t,z)=-R(t,z)+8(1+C)^{-3}z-R_z(t,z)z\,,\quad R(0,z)=R_{\mu}(z)\,.
	\end{equation}
	As the solution of \eqref{semicirc_R_PDE} is given by
	\begin{equation*}
		R(t,z)=e^{-t}R_{\mu}(ze^{-t})+4(1+C)^{-3}(1-e^{-2t})z\,,
	\end{equation*}
	it follows from  \eqref{R-scaling} and the further properties of
the R-transform mentioned above  that
	\begin{equation*}
		\mu_t
		=(e^{-t}\mu)\boxplus\left(\sqrt{1-e^{-2t}}
			\mu_{sc,4(1+C)^{-3/2}}\right)\,
	\end{equation*}
        as claimed.
\end{proof}

\begin{remark}
 The exchange of the  $p_N,q_N$ in our dynamical systems corresponds to a sign change (and thus a reverse numbering)
   of all particles in $[-1,1]$. In this way we may assume w.l.o.g.~that $C:=\lim_{N\to\infty}p_N/q_N\in [0,1]$
   holds in Theorem \ref{semicirc_ODE_thm}.
   Moreover, the degenerated case $C=\infty$ corresponds to the  degenerated case $C=0$ and is thus also included in
   Theorem \ref{semicirc_ODE_thm} in principle.
\end{remark}

In order to understand the meaning of Theorem \ref{semicirc_ODE_thm}, consider the following example:

\begin{example} Let $p_N,q_N, a_N, b_N$ be given as in Theorem \ref{semicirc_ODE_thm}, and take $x_i^N:=b_N$ for all $i,N$.
  Then all $\mu_{N,0}=\delta_0$ and $\mu_0=\delta_0$. In this case the measures $\mu_t$ from (\ref{limit-semi-main}) are the semicircle laws
  $\mu_t=\sqrt{1-e^{-2t}}\mu_{sc,4(1+C)^{-3/2}}$ for $t>0$. These measures describe the deviation of the particles 
  $x_i^N(t)$ at time $t/(p_N+q_N)$ from the numbers $b_N\in ]-1,1[$ locally w.r.t.~to the space scalings $a_N$.

  Notice that this even makes sense for the degenerated case $C=0$ where $\lim_{N\to\infty}b_N=-1$ holds.  
\end{example}

In summary,  Theorem \ref{semicirc_ODE_thm} 
is a local limit theorem which  describes the behaviour of the system  around the numbers $b_N$ for small times.
It is therefore astonishing that in the limit (\ref{limit-semi-main})
a stationary behaviour appears which is available on the global scale  of the particle processes on $[-1,1]$.
This picture appears also
in a variant of Theorem \ref{semicirc_ODE_thm} in the degenerated case $C=0$
in the following Theorem \ref{semicirc_ODE_thm2}. However,
   this stationarity disappears if we use  scalings  in space and 
   time of higher orders  than in Theorem \ref{semicirc_ODE_thm}; see Theorem \ref{semicirc_ODE_thm3} below.

\begin{theorem}\label{semicirc_ODE_thm2}
  Consider sequences $(p_N)_{N\in \mathbb N},(q_N)_{N\in \mathbb N}\subset ]0,\infty[$
    with $\lim_{N\to\infty}p_N/N=\infty$ and $\lim_{N\to\infty}q_N/N=\infty$ and
 $C:=\lim_{N\to\infty}p_N/q_N=0$. Define
  $$a_N:=\frac{\sqrt{q_N}}{\sqrt{N}}, \quad b_N:=\frac{p_N-q_N}{p_N+q_N} \quad(N\in\mathbb N).$$
    Let $\mu\in M^1(\mathbb {R})$ be a starting  measure and  $(x_N)_{N\in\mathbb {N}}$
     starting vectors
$x_N\in A_N$ as in Theorem \ref{semicirc_ODE_thm}.
Let $x_N(t)$ be the solutions of the ODEs \eqref{ODE_main} with $x_N(0)=x_N$ for $N\in\mathbb N$.
Then for all $t>0$, all moments of the  measures	$\mu_{N,t/(p_N+q_N)}$  as in Theorem \ref{semicirc_ODE_thm}
 tend to those of the  measure $e^{-t}\mu$.
\end{theorem}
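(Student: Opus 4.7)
The plan is to mimic the moment-based approach from the proof of Theorem \ref{semicirc_ODE_thm}, working with the same rescaled recurrence \eqref{moment-ODE-wigner} for $\tilde S_{N,l}(t):=S_{N,l}(t/(p_N+q_N))$, but to verify that under the coarser scaling $a_N=\sqrt{q_N/N}$ (smaller than the one used there by a factor $\sqrt{p_N/q_N}\to 0$) together with the degenerate assumption $C=0$, every nonlinear and lower-order term in the integrand of \eqref{moment-ODE-wigner} picks up a vanishing prefactor. What survives is only the linear dissipation, forcing the limit moments to be $e^{-lt}c_l$, which are exactly the moments of $e^{-t}\mu$; no PDE or R-transform analysis is then necessary.

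The first step would be to record the coefficient asymptotics produced by the new scaling. Since $1-b_N^2=4p_Nq_N/(p_N+q_N)^2\sim 4p_N/q_N$ and $p_N+q_N\sim q_N$ under $C=0$, the five prefactors multiplying the various terms inside the integral in \eqref{moment-ODE-wigner} satisfy
\[
  \tfrac{Na_N^2(1-b_N^2)}{p_N+q_N}\sim\tfrac{4p_N}{q_N},\ \ \tfrac{N}{p_N+q_N}\sim\tfrac{N}{q_N},\ \ \tfrac{Na_N|b_N|}{p_N+q_N}\sim\sqrt{\tfrac{N}{q_N}},\ \ \tfrac{a_N^2(1-b_N^2)}{p_N+q_N}\sim\tfrac{4p_N}{Nq_N},\ \ \tfrac{a_N|b_N|}{p_N+q_N}\sim\tfrac{1}{\sqrt{Nq_N}},
\]
each of which tends to $0$ under the hypotheses $p_N/q_N\to 0$ and $q_N/N\to\infty$. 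Simultaneously, the exponential prefactor $\exp((-l+l(l-1)/(p_N+q_N))t)$ tends to $e^{-lt}$ and the initial moments $\tilde S_{N,l}(0)$ tend to $c_l$ by assumption.

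I would then proceed by induction on $l$, exactly as in the estimate \eqref{ode_semicirc_pf_eq1} but with the constant $4(1+C)^{-3}$ replaced by any eventual upper bound on the vanishing prefactors displayed above. This still yields an $N$-uniform polynomial-type bound $|\tilde S_{N,l}(t)|\le (Rl)^l$, and dominated convergence in the $s$-integral then gives $\tilde S_{N,l}(t)\to e^{-lt}c_l$. Because $|e^{-lt}c_l|\le(\gamma l)^l$, the limit sequence satisfies the Carleman condition, so the moment problem is determined and the unique limit measure is $e^{-t}\mu$, whose $l$-th moment is $e^{-lt}c_l$. The only genuinely delicate step is the bookkeeping of the five distinct terms in the integrand of \eqref{moment-ODE-wigner}, verifying in particular that none of them survives at order $1$ to produce an additive free-convolution correction as it did in Theorem \ref{semicirc_ODE_thm}; once this is checked, the rest is a routine rerun of the previous proof.
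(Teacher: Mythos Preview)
Your proposal is correct and follows essentially the same route as the paper, which merely states that the proof is ``completely analogue to that of Theorem \ref{semicirc_ODE_thm}'' and skips the details. You have supplied exactly those details: the verification that with the coarser scaling $a_N=\sqrt{q_N/N}$ all five prefactors in the integrand of \eqref{moment-ODE-wigner} tend to zero, and the inductive passage to the limit $S_l(t)=e^{-lt}c_l$. The only (cosmetic) difference is that you identify the limit moments directly as those of $e^{-t}\mu$, whereas the paper remarks that running the R-transform PDE \eqref{semicirc_R_PDE} as before produces $(e^{-t}\mu)\boxplus\mu_{sc,0}$ with the degenerate semicircle $\mu_{sc,0}=\delta_0$; your shortcut is a harmless simplification since the free convolution with $\delta_0$ is trivial.
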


\begin{proof}
  The proof is completely analogue to that of Theorem \ref{semicirc_ODE_thm}. We thus skip the proof. We only point out that the limit can be
  interpreted as  $(e^{-t}\mu)\boxplus(\sqrt{1-e^{-2t}}\mu_{sc,0})$ where the  semicircle law degenerates into $\mu_{sc,0}=\delta_0$.
\end{proof}

We next consider a further variant of  Theorem \ref{semicirc_ODE_thm} with a different scaling  in space and 
time  where the limit loses its stationary behaviour, and where
the limit corresponds to the results for the Bessel processes of type A and their frozen versions
in Sections 2 and 3 of \cite{VW1}. We point out that here the conditions on the parameters $p_N,q_N, b_N$
are much more flexible,
and that this result admits an analogue for Jacobi processes on noncompact spaces; see Section 5.

\begin{theorem}\label{semicirc_ODE_thm3}
  Consider sequences $(p_N)_{N\in \mathbb N},(q_N)_{N\in \mathbb N}\subset ]0,\infty[$
    with $p_N,q_N>N-1$ for $N\ge 1$.  Let $(b_N)_{N\in \mathbb N}\subset]-1,1[$ be any sequence such that 
    $B:=\lim b_N\in[-1,1]$ exists.
 Let  $(s_N)_{N\in \mathbb N}\subset]0,\infty[$ be a sequence of time scalings  with
    $$\lim_{N\to\infty}\frac{p_N+q_N}{\sqrt{Ns_N}}=0,$$
    and define the space scalings
$ a_N:=\sqrt{s_N/N}$.

       Let $\mu\in M^1(\mathbb {R})$ be a starting  measure and  $(x_N)_{N\in\mathbb {N}}$
     starting vectors as in Theorem \ref{semicirc_ODE_thm}.
Let $x_N(t)$ be the solutions of the ODEs \eqref{ODE_main} with $x_N(0)=x_N$ for $N\in\mathbb N$.
Then for all $t>0$, all moments of the empirical measures
\begin{equation*}
	\mu_{N,t/s_N}=\frac{1}{N}\sum_{i=1}^N \delta_{a_N( x_i^N(t/s_N)-b_N)}
\end{equation*}
 tend to those of  $   \mu\boxplus\mu_{sc,2\sqrt{2(1-B^2)t}}$.
\end{theorem}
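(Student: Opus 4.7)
The argument parallels that of Theorem \ref{semicirc_ODE_thm}, adapted to the new space scaling $a_N=\sqrt{s_N/N}$ and time scaling $t\mapsto t/s_N$. The plan is to: (i) identify the limiting moment ODE from \eqref{moment-ODE1}; (ii) verify the Carleman condition for the limit moments; (iii) identify the limit measure by solving the resulting PDE for the R-transform. Setting $\tilde S_{N,l}(t):=S_{N,l}(t/s_N)$ and dividing \eqref{moment-ODE1} by $s_N$, one first notes that since $p_N+q_N>2(N-1)$, the hypothesis $(p_N+q_N)/\sqrt{Ns_N}\to 0$ forces $s_N/N\to\infty$, and hence both $N/s_N\to 0$ and $(p_N+q_N)/s_N=\bigl((p_N+q_N)/\sqrt{Ns_N}\bigr)\sqrt{N/s_N}\to 0$. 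Using $a_N^2=s_N/N$, I would check that every prefactor in the rescaled recurrence vanishes in the limit, with the sole exception of $Na_N^2(1-b_N^2)/s_N=1-b_N^2\to 1-B^2$ in front of the convolution $\sum_{k=0}^{l-2}S_{N,k}S_{N,l-2-k}$; indeed $a_N(p_N-q_N)/s_N=(p_N-q_N)/\sqrt{Ns_N}\to 0$, $b_Na_N(p_N+q_N)/s_N\to 0$, $a_N^2/s_N=1/N\to 0$ and $Na_N/s_N=\sqrt{N/s_N}\to 0$. For $l=1$ every coefficient vanishes, so $S_1(t)=S_1(0)$, and by induction on $l$, $\tilde S_{N,l}(t)$ converges pointwise on compact intervals to a function $S_l(t)$ satisfying
\[
\frac{d}{dt}S_l(t)=l(1-B^2)\sum_{k=0}^{l-2}S_k(t)\,S_{l-2-k}(t)\qquad(l\ge 2),
\]
with $S_0\equiv 1$ and initial values inherited from the moments of $\mu$.

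Exactly as in \eqref{ode_semicirc_pf_eq1}, a Gronwall-type induction on $l$ then yields $|S_l(t)|\le (Rl)^l$ for some $R>0$ uniformly on compact intervals. Hence $(S_l(t))_l$ satisfies the Carleman condition and uniquely determines a probability measure $\mu_t$; moment convergence then gives weak convergence $\mu_{N,t/s_N}\to\mu_t$.

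To identify $\mu_t$ I would pass to the limit in the PDE \eqref{stieltjes-pde} after dividing by $s_N$. The same term-by-term analysis shows that the limiting Cauchy transform $G(t,z):=G_{\mu_t}(z)$ satisfies the Burgers-type equation $G_t(t,z)=-2(1-B^2)\,G(t,z)\,G_z(t,z)$ with $G(0,z)=G_\mu(z)$. Using \eqref{R-Cauchy}, this becomes $R_t(t,w)=2(1-B^2)w$ for the R-transform, with solution $R(t,w)=R_\mu(w)+2(1-B^2)tw$. Since $R_{\mu_{sc,\lambda}}(w)=\lambda^2w/4$, the choice $\lambda=2\sqrt{2(1-B^2)t}$ gives $R(t,w)=R_\mu(w)+R_{\mu_{sc,\lambda}}(w)$, so $\mu_t=\mu\boxplus\mu_{sc,2\sqrt{2(1-B^2)t}}$ as claimed. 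The main obstacle is the careful scaling bookkeeping in step (i), in particular the easily-missed deduction that the weaker hypothesis $(p_N+q_N)/\sqrt{Ns_N}\to 0$ already forces $(p_N+q_N)/s_N\to 0$, which is what kills the linear terms in $S_{N,l-1}$ and $S_{N,l}$; the interchange of the $N\to\infty$ limit with differentiation in the PDE is then justified via the Laurent expansion as in Proposition 2.9 of \cite{VW1}.
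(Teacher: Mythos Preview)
Your proof is correct and follows the same overall strategy as the paper's proof: rescale the moment ODEs \eqref{moment-ODE1}, \eqref{moment-ODE2} by $1/s_N$, show by induction that all coefficients vanish except $Na_N^2(1-b_N^2)/s_N\to 1-B^2$, and thereby obtain the limiting moment recursion \eqref{recurrence-wigner-limit-2}. The paper then simply invokes the computations in Section~2 of \cite{VW1} (Lemma~2.4 and Theorem~2.10 there) for the Carleman estimate and the identification of the limit, whereas you carry both steps out explicitly---first the Gronwall bound $|S_l(t)|\le (Rl)^l$, then the Burgers-type PDE $G_t=-2(1-B^2)GG_z$ and its R-transform solution $R(t,w)=R_\mu(w)+2(1-B^2)tw$. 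Your explicit derivation that the hypothesis $(p_N+q_N)/\sqrt{Ns_N}\to 0$ combined with $p_N+q_N>2(N-1)$ forces $s_N/N\to\infty$ (and hence $(p_N+q_N)/s_N\to 0$) is a point the paper leaves implicit but which is indeed needed to kill the linear terms; this is a useful clarification.
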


\begin{proof}
  The proof is again  analog to that of Theorem \ref{semicirc_ODE_thm}. In fact, 
  the recurrence relations \eqref{moment-ODE1}, \eqref{moment-ODE2} show
   that here the moments $\tilde S_{N,l}(t):=S_{N,l}(t/s_N)$ of
   $\mu_{N,t/s_N}$ satisfy
$$\tilde S_{N,0}\equiv 1, \quad\quad
	\frac{d}{dt}\tilde S_{N,1}=-\frac{p_N+q_N}{s_N}\tilde S_{N,1}(t)+\frac{a_N(p_N-q_N-b_N(p_N+q_N))}{s_N}\to 0$$
and, for $l\ge2$,	        
\begin{align}\label{moment-ODE-wigner-2}	\frac{d}{dt}	\tilde	S_{N,l}(t)&=
		l\Bigl[	\frac{(p_N-q_N-b_N(p_N+q_N-2(l-1)))a_N}{s_N}\tilde S_{N,l-1}(t)-\frac{p_N+q_N-(l-1)}{s_N}\tilde S_{N,l}(t)\notag\\
                  &-\frac{a_N^2(1-b_N^2)}{s_N}(l-1)\tilde S_{N,l-2}(t)
		  +\frac{Na_N^2}{s_N}(1-b_N^2)\sum_{k=0}^{l-2}\tilde S_{N,k}(t)\tilde S_{N,l-2-k}(t)\notag\\
                  &-\frac{N}{s_N}\sum_{k=0}^{l-2}\tilde S_{N,k+1}(t)\tilde S_{N,l-1-k}(t)
		-2\frac{a_Nb_N N}{s_N}\sum_{k=0}^{l-2}\tilde S_{N,k}(t)\tilde S_{N,l-1-k}(t)\Bigr]\notag\\
                &\overset{N\to\infty}{\sim} l(1-B^2)\sum_{k=0}^{l-2}\tilde S_{N,k}(t)\tilde S_{N,l-2-k}(t).
                	\end{align}
Our starting conditions and induction  show that the 
$\tilde S_{N,l}(t)$ tend  to some  functions $S_l(t)$  with
\begin{equation}\label{recurrence-wigner-limit-2}
  S_0\equiv1,\quad
		S_1(t)=S_1(0)\,,\quad
		S_l(t)=S_l(0)+l(1-B^2)\int_0^t\sum_{k=0}^{l-2}S_k(s)S_{l-2-k}(s)
		\,ds
                \end{equation}
for $l\ge2$ and $t\ge0$. The computations in Section 2 of \cite{VW1} (see in particular the proofs of Lemma 2.4 and Theorem 2.10 there)
now yield the claim similar to the proof of Theorem \ref{semicirc_ODE_thm}.
\end{proof}
Furthermore, with a slight modification in the assumptions:
\begin{theorem}\label{semicirc_ODE_thm3.1}
  Consider sequences $(p_N)_{N\in \mathbb N},(q_N)_{N\in \mathbb N}\subset ]0,\infty[$
    with $\lim_{N\to\infty}(p_N+q_N)/N=\infty$. Let $(b_N)_{N\in \mathbb N}\subset]-1,1[$ be any sequence such that 
    $B:=\lim b_N\in[-1,1]$ exists.
 Let  $(s_N)_{N\in \mathbb N}\subset]0,\infty[$ be a sequence of time scalings  with
    $$\lim_{N\to\infty}\frac{p_N+q_N}{\sqrt{Ns_N}}\geq0,$$
    and define the space scalings
$ a_N:=\sqrt{s_N/N}$. Set
$c:=\lim_{N\to\infty}a_N\left(p_N-q_N-b_N(p_N+q_N)\right)/s_N$.

       Let $\mu\in M^1(\mathbb {R})$ be a starting  measure and  $(x_N)_{N\in\mathbb {N}}$
     starting vectors as in Theorem \ref{semicirc_ODE_thm}.
Let $x_N(t)$ be the solutions of the ODEs \eqref{ODE_main} with $x_N(0)=x_N$ for $N\in\mathbb N$.
Then for all $t>0$, all moments of the empirical measures
\begin{equation*}
	\mu_{N,t/s_N}=\frac{1}{N}\sum_{i=1}^N \delta_{a_N( x_i^N(t/s_N)-b_N)}
\end{equation*}
 tend to those of  $   \mu\boxplus\mu_{sc,2\sqrt{2(1-B^2)t}}\boxplus\delta_{ct}$.
\end{theorem}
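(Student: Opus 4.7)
The plan is to adapt the proof of Theorem \ref{semicirc_ODE_thm3} almost verbatim, tracking the single additional contribution produced by the new finite limit $c$. First, I would check that under the assumptions $(p_N+q_N)/N \to \infty$ and $(p_N+q_N)/\sqrt{N s_N}$ convergent in $[0,\infty)$, all of the ``error'' coefficients in the rescaled moment recurrence still behave as in Theorem \ref{semicirc_ODE_thm3}: one has $(p_N+q_N)/s_N \to 0$, $N/s_N \to 0$, $a_Nb_N N/s_N \to 0$, $a_N^2(1-b_N^2)/s_N \to 0$, and $N a_N^2(1-b_N^2)/s_N \to 1-B^2$. The only new feature is that the coefficient $a_N(p_N-q_N-b_N(p_N+q_N))/s_N$ now tends to $c$ instead of $0$.

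Substituting these limits into the rescaled recurrence \eqref{moment-ODE-wigner-2} for $\tilde S_{N,l}(t):=S_{N,l}(t/s_N)$, the $l=1$ equation gives $S_1(t)=S_1(0)+ct$ in the limit, while for $l \geq 2$ the limit moments satisfy
\begin{equation*}
S_l(t) = S_l(0) + l \int_0^t \Bigl[c\,S_{l-1}(s) + (1-B^2)\sum_{k=0}^{l-2} S_k(s)\, S_{l-2-k}(s)\Bigr]\, ds.
\end{equation*}
Arguing by induction on $l$ as in the proof of Theorem \ref{semicirc_ODE_thm}, I would then establish a bound of the form $|S_l(t)| \leq (R(1+t))^l\, l^l$ for $R$ sufficiently large, so that the $S_l(t)$ satisfy the Carleman condition and are the moments of unique probability measures $\mu_t \in M^1(\mathbb R)$; this also yields the weak convergence $\mu_{N,t/s_N} \to \mu_t$ together with convergence of all moments.

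To identify the $\mu_t$ I would divide the PDE \eqref{stieltjes-pde} by $s_N$ and pass to the limit $N \to \infty$, the interchange of derivatives and limit being justified via the Laurent series as in Proposition 2.9 of \cite{VW1}. All but three terms drop out and the limiting Cauchy transform $G(t,z)$ satisfies
\begin{equation*}
G_t(t,z) = -c\,G_z(t,z) - 2(1-B^2)\,G(t,z)\, G_z(t,z), \qquad G(0,z) = G_\mu(z).
\end{equation*}
Via \eqref{R-Cauchy} this translates into the linear R-transform ODE $R_t(t,w) = c + 2(1-B^2)\, w$ with $R(0,w) = R_\mu(w)$, whose unique solution is $R(t,w) = R_\mu(w) + ct + 2(1-B^2)\, t\, w$. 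Since $R_{\delta_{ct}}(w) = ct$ and $R_{\mu_{sc,2\sqrt{2(1-B^2)t}}}(w) = 2(1-B^2)\, t\, w$, additivity of the R-transform under $\boxplus$ identifies $\mu_t = \mu \boxplus \mu_{sc,2\sqrt{2(1-B^2)t}} \boxplus \delta_{ct}$, as claimed.

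The main obstacle is really only the asymptotic bookkeeping in the first step: because $(p_N+q_N)/\sqrt{Ns_N}$ is now permitted to have a strictly positive finite limit (rather than being forced to vanish as in Theorem \ref{semicirc_ODE_thm3}), one must combine this with $(p_N+q_N)/N \to \infty$ to conclude that all remaining error coefficients indeed vanish at the required rate. Apart from this and the additional drift $c$ surviving through the R-transform ODE, the argument is identical to that of Theorem \ref{semicirc_ODE_thm3}.
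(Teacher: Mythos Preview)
Your proposal is correct and is precisely the natural argument; the paper itself gives no proof for this statement beyond the phrase ``with a slight modification in the assumptions,'' so you have simply filled in the details of the intended modification of the proof of Theorem~\ref{semicirc_ODE_thm3}. Your asymptotic bookkeeping is right: from $(p_N+q_N)/N\to\infty$ and the finiteness of $\lim (p_N+q_N)/\sqrt{Ns_N}$ one deduces $a_N\to\infty$, whence all the vanishing claims follow, and the surviving drift term $c$ propagates through the moment recursion and the R-transform ODE exactly as you describe.
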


In the next step we use the ideas of the proof of Theorem \ref{semicirc_ODE_thm} in combination with Theorem
\ref{main-solutions-ode} which says that the vectors with  the ordered zeros of corresponding Jacobi polynomials
form stationary solutions  of the ODEs (\ref{ODE_main}).  This
leads to the following limit result on the empirical measures
of the zeros of the Jacobi polynomials which was derived in \cite{DS} by different methods:

\begin{theorem}\label{semicirc_ODE_thm-zeros}
Consider sequences $(p_N)_{N\in \mathbb N},(q_N)_{N\in \mathbb N}\subset ]0,\infty[$
    with $\lim_{N\to\infty}p_N/N=\infty$ and  $\lim_{N\to\infty}q_N/N=\infty$ such that
 $C:=\lim_{N\to\infty}p_N/q_N\geq0$ exists. Define
  $$a_N:=\frac{q_N}{\sqrt{Np_N}}, \quad b_N:=\frac{p_N-q_N}{p_N+q_N} \quad(N\in\mathbb N).$$

     Let $-1<z_1^N<\ldots<z_N^N<1$ be the ordered zeros of the Jacobi polynomials $P_N^{(q_N-N,p_N-N)}$.
  Then all moments of the empirical measures
  $$\tilde \mu_{N}	:=\frac{1}{N}\sum_{i=1}^N \delta_{a_N (z_i^N-b_N)}$$
tend to those of $\mu_{sc,4(1+C)^{-3/2}}$. In particular, the $\tilde \mu_{N}$ tend weakly to 
 $\mu_{sc,4(1+C)^{-3/2}}$.
\end{theorem}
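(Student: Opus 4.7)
The plan is to apply Theorem \ref{semicirc_ODE_thm} with the starting vectors $x_N := z^N$, where $z^N=(z_1^N,\dots,z_N^N)$ is the ordered vector of zeros of $P_N^{(q_N-N,p_N-N)}$. By Theorem \ref{main-solutions-ode}, $z^N$ is the unique stationary solution of the ODE \eqref{ODE_main}, so the solution $x_N(t)$ started at $z^N$ satisfies $x_N(t)=z^N$ for every $t\ge 0$. Consequently $\mu_{N,t/(p_N+q_N)}=\tilde\mu_N$ for all $t\ge 0$ and all $N$, which turns Theorem \ref{semicirc_ODE_thm} into a fixed-point relation for the moment limit.

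The first step is to verify that $(\tilde\mu_N)_N$ admits a moment-convergent subsequence with limit $\mu$ satisfying the growth condition $|c_l|\le(\gamma l)^l$ required by Theorem \ref{semicirc_ODE_thm}. Because of stationarity, the moments $S_{N,l}$ of $\tilde\mu_N$ solve the algebraic system obtained from \eqref{moment-ODE1}, \eqref{moment-ODE2} by setting $\frac{d}{dt}S_{N,l}=0$. Our choice $b_N=(p_N-q_N)/(p_N+q_N)$ makes the coefficient $p_N-q_N-b_N(p_N+q_N)$ vanish, and dividing the $l$-th equation by $p_N+q_N$ one obtains, using $\lim_N Na_N^2/(p_N+q_N)$ and $\lim_N a_Nb_N N/(p_N+q_N)$ being finite while the coefficients of the ``degenerate'' terms $S_{N,l-1}$ and $S_{N,l-2}$ vanish, a recursion whose coefficients are bounded in $N$. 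An induction on $l$ then yields $|S_{N,l}|\le(\gamma l)^l$ uniformly in $N$ for a suitable $\gamma>0$, and a diagonal extraction produces moment limits along a subsequence, which by the Carleman condition determine a unique measure $\mu\in M^1(\mathbb R)$.

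The second step is to invoke Theorem \ref{semicirc_ODE_thm} along that subsequence. The hypotheses hold with this $\mu$, and since $\mu_{N,t/(p_N+q_N)}=\tilde\mu_N$ for every $t$, the theorem forces
\begin{equation*}
\mu = (e^{-t}\mu)\boxplus\Bigl(\sqrt{1-e^{-2t}}\,\mu_{sc,4(1+C)^{-3/2}}\Bigr)\qquad\text{for every } t>0.
\end{equation*}
Letting $t\to\infty$, we have $e^{-t}\mu\to\delta_0$ and $\sqrt{1-e^{-2t}}\,\mu_{sc,4(1+C)^{-3/2}}\to\mu_{sc,4(1+C)^{-3/2}}$ weakly, and weak continuity of $\boxplus$ (applicable here since the second factor has compact support and the first is tight, with moments bounded along the flow) identifies $\mu=\mu_{sc,4(1+C)^{-3/2}}$. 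Because every subsequential moment limit is the same measure, determined by its moments, the full sequence $\tilde\mu_N$ converges in moments to $\mu_{sc,4(1+C)^{-3/2}}$, and weak convergence follows.

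The main obstacle is the uniform moment bound $|S_{N,l}|\le(\gamma l)^l$: the crude estimate $|\tilde z_i^N|\le 2a_N$ degenerates when $a_N\to\infty$ (as happens whenever $q_N/N\to\infty$), so the bound must be extracted from the stationary moment recursion itself. Once this technical point is in place, the fixed-point argument above is short, and the recursion essentially tells us directly that the only solution is the semicircle law of radius $4(1+C)^{-3/2}$.
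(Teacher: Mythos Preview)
Your route differs from the paper's, and the difference matters because the obstacle you flag is genuine but also entirely avoidable.

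The paper does not start at $z^N$. It starts the ODE at $x_N=(b_N,\dots,b_N)$, so that $\mu=\delta_0$ and $S_{N,l}(0)=0$ for $l\ge1$. It then lets $t\to\infty$ in the recursion \eqref{moment-ODE-wigner}, obtaining by dominated convergence an explicit algebraic recursion \eqref{moment-ODE-wigner-limit} for the stationary moments $\tilde S_{N,l}(\infty)$. By Theorem \ref{main-solutions-ode} these are precisely the moments of $\tilde\mu_N$. Finally the paper lets $N\to\infty$ in this stationary recursion: all coefficients except $Na_N^2(1-b_N^2)/(p_N+q_N)\to 4(1+C)^{-3}$ vanish, so by induction on $l$ the full sequence $\tilde S_{N,l}(\infty)$ converges to numbers $S_l(\infty)$ satisfying the Catalan recursion $S_l(\infty)=4(1+C)^{-3}\sum_{k=0}^{l-2}S_k(\infty)S_{l-2-k}(\infty)$, which identifies the semicircle law directly.

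Your fixed-point argument via Theorem \ref{semicirc_ODE_thm} is correct in principle, but the uniform bound $|S_{N,l}|\le(\gamma l)^l$ you need for it is not easy: the coefficient of $S_{N,l-1}$ in the stationary recursion carries a factor $a_N$, and $a_N\to\infty$, so a naive induction in $l$ uniformly in $N$ does not close. The point is that you don't need it. The very stationary recursion you write down (set $\frac{d}{dt}S_{N,l}=0$ in \eqref{moment-ODE1} and divide by $p_N+q_N-(l-1)$) already proves, by induction on $l$ for \emph{fixed} $l$, that the whole sequence $(S_{N,l})_N$ converges, because every coefficient converges as $N\to\infty$. That limit recursion is exactly the Catalan one, so the subsequence extraction, the application of Theorem \ref{semicirc_ODE_thm}, and the $\boxplus$-continuity step are all superfluous. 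In short, your last sentence (``the recursion essentially tells us directly that the only solution is the semicircle law'') is the whole proof; the fixed-point layer on top of it is what creates the obstacle.
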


\begin{proof} Consider the solutions of the ODEs (\ref{ODE_main})
  as in Theorem \ref{semicirc_ODE_thm} with the initial conditions $x_N:=(b_N,\ldots,b_N)\in A_N$, i.e., with $\mu=\delta_0$ and
  $ S_{N,l}(0)=0$ for $l\ge1$.
  We show that for the moments $\tilde S_{N,l}(t)$ from the proof of Theorem \ref{semicirc_ODE_thm}  the limits
  $\tilde S_{N,l}(\infty):=\lim_{t\to\infty}\tilde S_{N,l}(t)$ exist. In fact, this is clear for $l=0,1$,
and (\ref{moment-ODE-wigner}) and   dominated convergence show
   inductively for $l\ge 2$ that
  \begin{align}\label{moment-ODE-wigner-limit}
    \tilde	S_{N,l}(\infty)&=\frac{l}{p_N+q_N}\lim_{t\to\infty}\int_0^t
 exp\Bigl(-\Bigl(l-\frac{l(l-1)}{p_N+q_N}\Bigr)(t-s)\Bigr) H_{N,l}(s) \> ds\\
&=\frac{l}{p_N+q_N}\lim_{t\to\infty}\int_0^t
 exp\Bigl(-\Bigl(l-\frac{l(l-1)}{p_N+q_N}\Bigr)s\Bigr) H_{N,l}(t-s) \> ds\\
&=\frac{1}{p_N+q_N-l+1}\Biggl(
2a_Nb_N(l-1) \tilde S_{N,l-1}(\infty) \notag\\
&\quad\quad\quad\quad\quad  - (1-b_N^2)a_N^2(l-1) \tilde S_{N,l-2}(\infty)
+Na_N^2(1-b_N^2)\sum_{k=0}^{l-2} \tilde S_{N,k}(\infty) \tilde S_{N,l-2-k}(\infty)
\notag\\
&\quad\quad\quad\quad\quad-
N\sum_{k=0}^{l-2} \tilde S_{N,k+1}(\infty) \tilde S_{N,l-1-k}(\infty)
-2b_NNa_N\sum_{k=0}^{l-2} \tilde S_{N,k}(\infty) \tilde S_{N,l-1-k}(\infty)\Biggr)
 \notag	\end{align}
where $H_{N,l}(s)$ is the term in the big brackets in the last 3 lines of (\ref{moment-ODE-wigner}).
On the other hand, we conclude from Theorem \ref{main-solutions-ode} that
the $\tilde S_{N,l}(\infty)$ are the moments of the empirical measures $\tilde \mu_{N}$.
Furthermore, similar to (\ref{recurrence-wigner-limit}),
 we see that   for all $l$ the limits
$S_l(\infty):=\lim_{N\to\infty} \tilde	S_{N,l}(\infty) $ exist with $S_0(\infty)=1$,  $S_1(\infty)=0$, and
	$$S_l(\infty)= \frac{4l}{(1+C)^3}\sum_{k=0}^{l-2}S_k(\infty)S_{l-2-k}(\infty)  \quad(l\ge2).$$
As this is just the recurrence for the Catalan numbers up to some rescaling
(see e.g.~Section 2.1.1 of \cite{AGZ}), it follows readily that the $S_l(\infty)$ are the moments of
$\mu_{sc,4(1+C)^{-3/2}}$.
\end{proof}

We next turn to the second limit case which concerns Marchenko-Pastur distributions, and which is motivated by 
 Corollary 2.5 of \cite{DS}.  We here assume that the sequences $(p_N)_{N\in \mathbb N},(q_N)_{N\in \mathbb N}$ satisfy
\begin{equation}\label{marchenko_param}
  \lim_{N\to\infty} p_N/N=:\hat{p}\in [1,\infty[,\quad\lim_{N\to\infty}q_N/N=\infty.
\end{equation}
We then choose the norming constants
\begin{equation}\label{marchenko_param1}
  b_N:=-1\,,\quad   a_N:=q_N/N\,.
\end{equation}
In this regime we will obtain a limit theorem which involves Marchenko-Pastur distributions.
 For this we recall that for $c\ge0$, $t>0$, the
 Marchenko-Pastur distribution $\mu_{MP,c,t}\in M^1([0,\infty[)$ is the probability measure
 with $\mu_{MP,c,t}=\tilde{\mu}$ for $c\geq 1$ and $\mu_{MP,c,t}=(1-c)\delta_0+c \tilde{\mu}$ for $0\leq c < 1$, where for
$x_\pm:=t(\sqrt{c}\pm1)^2$, the measure $\tilde{\mu}$ on $]x_-,x_+[$ has the density
 \begin{equation}\frac{1}{2\pi x t}\sqrt{(x_+-x)(x-x_-)}\,.\end{equation}
We also recall (see Exercise 5.3.27 of \cite{AGZ}) that  the R-transforms of the Marchenko-Pastur distributions are given by
\begin{equation}\label{r-transform-mp}
R_{MP,c,t}(z)=\frac{ct}{1-tz}.
\end{equation}
This in particular implies the following well-known relation
\begin{equation}\label{mp-product-formula} \mu_{MP,a,t}\boxplus \mu_{MP,b,t}=\mu_{MP,a+b,t} \quad(a,b,t>0).\end{equation}
We now the following local limit theorem of stationary type which corresponds to Theorem \ref{semicirc_ODE_thm}.

\begin{theorem}\label{MP_ODE_thm}
  Consider $p_N,q_N,a_N,b_N$ as in (\ref{marchenko_param}) and (\ref{marchenko_param1}).
  Let $\mu\in M^1([0,\infty[)$ be a probability measure such that its  moments $c_l$  satisfy
	$\lvert c_l\rvert\leq(\gamma l)^l$ for $l\in\mathbb {N}_0$ with some constant $\gamma>0$.
  Moreover, let $(x_N)_{N\in\mathbb {N}}=((x_{1}^N,\dots,x_{N}^N))_{N\in\mathbb {N}}$   be an associated sequence of starting vectors
  $x_N\in A_N$ as described in Theorem \ref{semicirc_ODE_thm}

Let $x_N(t)$ be the solutions of the ODEs \eqref{ODE_main} with start in $x_N(0)=x_N$ for $N\in\mathbb N, t\ge0$.
	Then for all $t>0$, all moments of the empirical measures
	\begin{equation*}
		\mu_{N,t/(p_N+q_N)}
		=\frac{1}{N}\sum_{i=1}^N \delta_{a_N( x_i^N(t/(p_N+q_N))-b_N)}\end{equation*}
tend to those of the probability measures
\begin{equation}\label{limit-measure-MP-case}
			\mu(t)
				:=\left(\mu_{SC,2\sqrt{2(1-e^{-t})}}
					\boxplus\left(\sqrt{e^{-t}\mu}\right)_{\text{even}}\right)^2
					\boxplus \mu_{MP,\hat{p}-1,2(1-e^{-t})}\,,\quad t>0\,.
		\end{equation}
\end{theorem}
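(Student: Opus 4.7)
My plan is to adapt the moment-based strategy of Theorem \ref{semicirc_ODE_thm} to the present Marchenko-Pastur regime. With $b_N=-1$, the factor $1-b_N^2$ in several terms of the recursion \eqref{moment-ODE1} vanishes, while $a_N(p_N-q_N-b_N(p_N+q_N))=2p_N q_N/N$ and $-2a_N b_N N=2q_N$. Setting $\tilde S_{N,l}(t):=S_{N,l}(t/(p_N+q_N))$ and using the asymptotics $2p_N q_N/(N(p_N+q_N))\to 2\hat p$, $q_N/(p_N+q_N)\to 1$, and $N/(p_N+q_N)\to 0$, I expect the limiting moment recursion to read
\begin{equation*}
S_0\equiv 1,\quad S_1'(t)=-S_1(t)+2\hat p,\quad S_l'(t)=l\!\left[2\hat p\, S_{l-1}(t)-S_l(t)+2\!\sum_{k=0}^{l-2} S_k(t)\, S_{l-1-k}(t)\right]\ (l\ge 2),
\end{equation*}
with $S_l(0)$ the $l$-th moment of $\mu$. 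An induction along the lines of \eqref{ode_semicirc_pf_eq1} should then deliver both the pointwise limits $S_l(t)=\lim_N \tilde S_{N,l}(t)$ and a Carleman-type bound $|S_l(t)|\le(Rl)^l$ for some $R=R(\hat p,\gamma)$, so that $\mu_{N,t/(p_N+q_N)}$ converges weakly to a unique $\mu(t)\in M^1(\mathbb R)$ whose moments are the $S_l(t)$.

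To identify $\mu(t)$, I would pass to the limit in \eqref{stieltjes-pde} after dividing by $(p_N+q_N)$. Several terms drop out, namely $\partial_{zz}(z^2 G^N)/(p_N+q_N)$, all $(1-b_N^2)$-terms, and everything carrying the prefactor $2N/(p_N+q_N)$, while the exchange of $N\to\infty$ with $\partial_t,\partial_z$ is justified by the Laurent-series argument of Proposition~2.9 of \cite{VW1}. The limiting Cauchy transform $G(t,z):=G_{\mu(t)}(z)$ should then satisfy
\begin{equation*}
G_t=(z-2\hat p+2)\,G_z+G-2G^2-4z\, G\, G_z,\qquad G(0,z)=G_\mu(z),
\end{equation*}
which, via \eqref{R-Cauchy}, transforms into the first-order linear PDE for the R-transform $R(t,w):=R_{\mu(t)}(w)$,
\begin{equation*}
R_t(t,w)=2\hat p+(4w-1)R(t,w)+(2w^2-w)R_w(t,w),\qquad R(0,w)=R_\mu(w).
\end{equation*}

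I would solve this via characteristics. The characteristic ODE $\dot w=w(1-2w)$ is separable with solution $w(s)=w_0 e^s/(1+2w_0(e^s-1))$, so tracing backward from $(t,w)$ to $s=0$ gives $w_0=w/E$ where $E=E(t,w):=e^t(1-2w)+2w$. Because $\dot w/w=1-2w$ and $(d/ds)\log(1-2w(s))=-2w(s)$, the integrating factor for $\dot R-(4w-1)R=2\hat p$ simplifies telescopically to $I(s)=w(s)(1-2w(s))/(w_0(1-2w_0))$, and the closed-form solution is
\begin{equation*}
R(t,w)=\frac{e^t}{E^2}\,R_\mu\!\left(\frac{w}{E}\right)+\frac{2\hat p\,(e^t-1)}{E}.
\end{equation*}
A useful sanity check is $\mu=\delta_0$, so $R_\mu\equiv 0$: the formula reduces to $2\hat p(1-e^{-t})/(1-2(1-e^{-t})w)=R_{\mu_{MP,\hat p,2(1-e^{-t})}}(w)$, which is consistent with \eqref{limit-measure-MP-case} since $\mu_{sc,2\sqrt{2(1-e^{-t})}}^2=\mu_{MP,1,2(1-e^{-t})}$ and \eqref{mp-product-formula} together force $\mu(t)=\mu_{MP,\hat p,2(1-e^{-t})}$.

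The main obstacle is identifying this closed-form expression with the R-transform of the measure in \eqref{limit-measure-MP-case} for general $\mu$. The Marchenko-Pastur summand simplifies via $1-2(1-e^{-t})w=e^{-t}E$ and \eqref{r-transform-mp} to $R_{\mu_{MP,\hat p-1,2(1-e^{-t})}}(w)=2(\hat p-1)(e^t-1)/E$, so the residual task is to show that the squared piece $R_{\nu_t^2}(w)$, with $\nu_t:=\mu_{sc,2\sqrt{2(1-e^{-t})}}\boxplus(\sqrt{e^{-t}\mu})_{\text{even}}$, equals $e^t R_\mu(w/E)/E^2+2(e^t-1)/E$. I would establish this by combining the additivity of $R$-transforms under $\boxplus$ with the squaring identity $R_{\tau^2}(s)=z\,R_\tau(zs)$ (valid for symmetric $\tau$, since $G_{\tau^2}(z^2)=G_\tau(z)/z$ together with the defining relation of $R$ yields $R_{\tau^2}(s)=z^2-1/s=z\,R_\tau(zs)$), applied first to $(\sqrt{e^{-t}\mu})_{\text{even}}$ using $((\sqrt{e^{-t}\mu})_{\text{even}})^2=e^{-t}\mu$ and the scaling rule \eqref{R-scaling}, and then to $\nu_t$ itself. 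Alternatively, since the first-order linear PDE for $R$ above has a unique solution given the initial data, it suffices to verify directly that the candidate R-transform satisfies both the PDE and the initial condition, thereby bypassing an explicit expansion.
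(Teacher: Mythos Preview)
Your setup matches the paper's: the limiting moment recursion, the Cauchy-transform PDE $G_t=(z-2(\hat p-1)-4zG)G_z+G-2G^2$, the R-transform PDE, and the characteristic solution are all correct, and your closed form $R(t,w)=e^tR_\mu(w/E)/E^2+2\hat p(e^t-1)/E$ with $E=e^t(1-2w)+2w$ coincides with the paper's \eqref{MP_R_lim} via $1-2(1-e^{-t})w=e^{-t}E$.

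The genuine gap is the identification step you yourself flag as ``the main obstacle''. Your route~(a), the implicit squaring identity $R_{\tau^2}(s)=zR_\tau(zs)$ with $z$ determined by $s=G_\tau(z)/z$, does not by itself produce an explicit formula for $R_{\nu_t^2}$: applying it to $\nu_t$ gives $R_{\nu_t^2}(s)=2(1-e^{-t})z^2s+zR_\rho(zs)$ with one implicit variable, and unwinding $R_\rho$ from $R_{e^{-t}\mu}$ introduces a second; you give no indication how to eliminate these and land on $e^tR_\mu(w/E)/E^2+2(e^t-1)/E$. Your route~(b) is circular as stated: verifying that the candidate satisfies the PDE requires differentiating $R_{\nu_t^2}$ in $t$ and $w$, which presupposes the explicit description you are trying to establish. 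The paper closes the gap by a different device. After splitting off the Marchenko--Pastur summand $2(\hat p-1)(1-e^{-t})/(1-2(1-e^{-t})w)$, it fixes $s>0$, sets $\widehat\phi(z):=e^{-s}R_\mu(e^{-s}z)$, and checks by direct computation that the remaining two summands, written as $f(t',z)=(1-t'z)^{-2}\widehat\phi(z/(1-t'z))+t'/(1-t'z)$, solve the \emph{auxiliary} PDE $f_{t'}=z^2f_z+2zf+1$ with $f(0,\cdot)=R_{e^{-s}\mu}$. This PDE is exactly the one identified in \cite[Theorem~4.8]{VW1} as governing $R_{(\mu_{sc,2\sqrt{t'}}\boxplus(\sqrt{e^{-s}\mu})_{\text{even}})^2}$; evaluating at $t'=2(1-e^{-s})$ then yields the required $R_{\nu_s^2}$. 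You should either invoke that result or carry out the corresponding verification of the auxiliary PDE explicitly.
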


\begin{proof}
As in the proof of Theorem \ref{semicirc_ODE_thm} we see that the recurrence relations \eqref{moment-ODE1}, \eqref{moment-ODE2} together with the initial conditions for $t=0$ and our choice of $b_N$,
  that the moments $\tilde S_{N,l}(t):=S_{N,l}(t/(p_N+q_N))$ of
  $\mu_{N,t/(p_N+q_N)}$ satisfy
\begin{equation*}
	\tilde S_{N,0}\equiv 1, \quad
	\tilde S_{N,1}(t)
	=e^{-t}\left(S_{N,1}(0)-\frac{2a_Np_N}{(p_N+q_N)}\right)+\frac{2a_Np_N}{(p_N+q_N)}
\end{equation*}
and, for $l\ge 2$,
\begin{align}\label{moment-ODE-mp}
	\tilde	S_{N,l}(t)
	&=\exp\Bigl(\Bigl(-l+\frac{l(l-1)}{p_N+q_N}\Bigr)t\Bigr)\Biggl[S_{N,l}(0)\\
		&\quad+\frac{l}{p_N+q_N}\int_0^t \exp\Bigl(\Bigl(l-\frac{l(l-1)}{p_N+q_N}\Bigr)s\Bigr)\Biggl(
		2a_N(p_N-2(l-1)) \tilde S_{N,l-1}(s)\notag\\
		&\quad\quad\quad\quad\quad-N\sum_{k=0}^{l-2} \tilde S_{N,k+1}(s) \tilde S_{N,l-1-k}(s)
		+2a_NN\sum_{k=0}^{l-2} \tilde S_{N,k}(s) \tilde S_{N,l-1-k}(s)
\Biggr)ds\Biggr].
	        \notag
\end{align}
As the starting moments $S_{N,l}(0)$ ($l\ge0$) converge to the corresponding moments of $\mu$
for $N\to\infty$, we conclude by induction on $l$, that the 
$\tilde S_{N,l}(t)$ converge  to some  functions $S_l(t)$ for  $l\ge0$ and $t\ge0$. Moreover, these limits satisfy
\begin{equation}\label{recurrence-mp-limit}
\begin{gathered}
  S_0\equiv1,\quad
		S_1(t)=e^{-t}\left(S_1(0)-2\hat{p}\right)+2\hat{p}\,,\\
		S_l(t)
		=e^{-lt}\left(S_l(0)+2l\int_0^te^{ls}\left(\hat{p}S_{l-1}(s)+
			\sum_{k=0}^{l-2}S_k(s)S_{l-1-k}(s)\right)
		\,ds\right)\,,\;l\geq2\,.
\end{gathered}
\end{equation}
Analogously to the the proof of Theorem \ref{semicirc_ODE_thm} one can show that the $S_l(t)$ satisfy the Carleman condition \eqref{Carleman} for $t>0$. Thus, by the moment convergence theorem there exist unique $\mu_t\in M^1(\mathbb {R})$ with
	$(S_l(t))_l$ as sequences of moments.\\
    To identify the $\mu_t$ we again derive a PDE for the Cauchy and R-transforms of the $\mu_t$.
	We set
	\begin{equation*}
		G(t,z):=G_{\mu_t}(z)
		=\lim_{N\to\infty}G_{\mu_{N,t/(p_n+q_N)}}(z)\,.
	\end{equation*}
	The PDEs \eqref{stieltjes-pde} here lead to the PDE
	\begin{align}
	G_t(t,z)=&zG_z(t,z)+G(t,z)-2(G(t,z)^2+2zG(t,z)G_z(t,z)
		-G_z(t,z))-2\hat{p}G_z(t,z)\notag\\
		=&(z-2(\hat{p}-1)-4zG(t,z))G_z(t,z)+G(t,z)-2G(t,z)^2\,.
	\end{align}
	Using \eqref{R-Cauchy}, we obtain 
	\begin{equation*}
	\begin{split}
		-R_t(t,G(t,z))=&\frac{G_t(t,z)}{G_z(t,z)}\\
		=&(R(t,G(t,z))+\frac{1}{G(t,z)})(1-4G(t,z))-2(\hat{p}-1)\\
                &\quad +(G(t,z)-2G(t,z)^2)(R_z(t,G(t,z))-\frac{1}{G(t,z)^2})\\
		=&-4G(t,z)R(t,G(t,z))-2(\hat{p}-1)-2+(G(t,z)-2G(t,z)^2)R_z(t,G(t,z))+R(t,G(t,z))
	\end{split}
	\end{equation*}
	and thus
	\begin{equation*}
		0=R_t(t,z)-(2z^2-z)R_z(t,z)-(4z-1)R(t,z)-2\hat{p}\,.
	\end{equation*}
	If $\phi(z):=R(0,z)$, the method of characteristics (see e.g. \cite{St}) leads to the solution
\begin{equation}\label{MP_R_lim}
		R(t,z)
		=e^{-t}(1-2z(1-e^{-t}))^{-2}\phi(e^{-t}z(1-2z(1-e^{-t}))^{-1})
			+\frac{2(1-e^{-t})}{1-2(1-e^{-t})z}
			+\frac{2(\hat{p}-1)(1-e^{-t})}{1-2(1-e^{-t})z}\,.
\end{equation}
The third summand on the RHS of this equation corresponds to the second $\boxplus$-summand in 
(\ref{limit-measure-MP-case}). We thus only have to investigate the first two summands on the RHS of (\ref{MP_R_lim}).
For this we fix $s>0$ and define the function $\widehat{\phi}(z):=e^{-s}\phi(e^{-s}z)$.
We also define
\begin{equation*}
f(t,z):=(1-tz)^{-2}\widehat{\phi}\left(\frac{z}{1-tz}\right)+\frac{t}{1-tz}\quad(z\in\mathbb C\setminus \mathbb R, t>0)\,.
\end{equation*}
With the abbreviation $\hat{z}:=\frac{z}{1-tz}$ we then obtain
		\begin{equation*}
		\begin{split}
			f_t(t,z)
			={}&2z(1-tz)^{-3}\widehat{\phi}\left(\hat{z}\right)
				+\frac{z^2}{(1-tz)^{4}}\widehat{\phi}'\left(\hat{z}\right)+\frac{1}{(1-tz)^2}\\
			={}&\frac{2z(1-tz)+2tz^2}{(1-tz)^{3}}\widehat{\phi}\left(\hat{z}\right)
				+\frac{z^2}{(1-tz)^{4}}\widehat{\phi}'\left(\hat{z}\right)+\frac{2zt(1-tz)+t^2z^2+(1-tz)^2}{(1-tz)^2}\\
			={}&\frac{2tz^2}{(1-tz)^{3}}\widehat{\phi}\left(\hat{z}\right)
				+\frac{z^2}{(1-tz)^{4}}\widehat{\phi}'\left(\hat{z}\right)+\frac{t^2z^2}{(1-tz)^2}
				+\frac{2z}{(1-tz)^{2}}\widehat{\phi}\left(\hat{z}\right)+\frac{2zt}{1-tz}+1\\
			={}&z^2f_z(t,z)+2zf(t,z)+1\,.
		\end{split}
		\end{equation*}
Therefore, our $f$ solves the PDE
		\begin{equation}
			f_{t}(t,z)=1+2zf(t,z)+z^2f_z(t,z)\,, \quad
			f(0,z)=R_{\exp(-s)\mu}(z).
		\end{equation}
	 Theorem 4.8 in \cite{VW1} und \eqref{R-scaling} now imply that
		\begin{equation*}
			f(t,z)
			=R_{\left(\mu_{SC,2\sqrt{t}}
				\boxplus \left(\sqrt{\exp(-s)\mu}\right)_{\text{even}}\right)^2}(z)
				\quad\text{for}\,\,t>0\,.
		\end{equation*}
                This and the formula
                $R_{\mu\boxplus\nu}=R_{\mu}+R_{\nu}$ for the R-transform  now complete the proof.
	\end{proof}
	
	\begin{remark}
          If we take the starting distribution $\mu=\mu_{MP,r,s}$ for $r\ge0, s>0$, then,
         with the notations of the preceding proof, $\widehat{\phi}(z)=R_{\mu_{MP,r,s}}(z)=\frac{rs}{1-sz}$.
A partial fraction decomposition here leads to
		\begin{equation*}
			f(t,z)=(1-tz)^{-2}\widehat{\phi}\left(\frac{z}{1-tz}\right)+\frac{t}{1-tz}
			=\frac{rs}{(1-tz)(1-(t+s)z)}+\frac{t}{1-tz}=\frac{r(t+s)}{1-(t+s)z}+\frac{(1-r)t}{1-tz}\,.
		\end{equation*}
	This leads to
		\begin{equation*}
			R_{\left(\mu_{SC,\sqrt{t}}\boxplus\left(\sqrt{\mu_{MP,r,s}}\right)_{\text{even}}\right)^2}(z)
			=\frac{r(t+s)}{1-(t+s)z}+\frac{(1-r)t}{1-tz}\,,\quad r,\,s,\,t\geq0\,
		\end{equation*}
                which generalizes (4.14) in \cite{VW1} slightly.    
	\end{remark}
	
	Similarly to Theorem \ref{semicirc_ODE_thm3} we now consider a variant of  Theorem \ref{MP_ODE_thm} with a different scaling  in space and 
time  where the limit loses its stationary behaviour, and where
the limit corresponds to the results for the Bessel processes of type B and their frozen versions
in Sections 4 and 5 of \cite{VW1}.

\begin{theorem}\label{MP_ODE_thm2}
  Consider sequences $(p_N)_{N\in\mathbb{N}},(q_N)_{N\in\mathbb{N}}\subset]0,\infty]$ with 
  $p_N,q_N>N-1$ 
  for $N\geq1$ and $\lim_{N\to\infty}p_N/N=\hat{p}$. Let  $(s_N)_{N\in \mathbb N}\subset]0,\infty[$ be a sequence of time scalings  with
    $\lim_{N\to\infty}(p_n+q_N)/s_N=0$.  Define the space scalings 
    $a_N:=s_N/N$, $b_N:=-1$ ($N\in\mathbb{N}$).
       Let $\mu\in M^1([0,\infty[)$ be a starting  measure and  $(x_N)_{N\in\mathbb {N}}$
     starting vectors as in Theorem \ref{MP_ODE_thm}.
Let $x_N(t)$ be the solutions of the ODEs \eqref{ODE_main} with $x_N(0)=x_N$ for $N\in\mathbb N$.
Then for all $t>0$, all moments of the empirical measures
\begin{equation*}
		\mu_{N,t/s_N}
		=\frac{1}{N}\sum_{i=1}^N \delta_{a_N( x_i^N(t/s_N)-b_N)}
\end{equation*}
 tend to those of  $\left(\mu_{sc,2\sqrt{2t}}
					\boxplus\left(\sqrt{\mu}\right)_{\text{even}}\right)^2
					\boxplus \mu_{MP,\hat{p}-1,2t}$.
\end{theorem}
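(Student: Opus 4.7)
The plan is to mimic the structure of Theorem \ref{MP_ODE_thm} while following the non-stationary scaling strategy of Theorem \ref{semicirc_ODE_thm3}. First, I would set $\tilde S_{N,l}(t):=S_{N,l}(t/s_N)$ and apply the chain rule to the moment ODEs \eqref{moment-ODE1}, \eqref{moment-ODE2}. With $b_N\equiv -1$ and $a_N=s_N/N$, the coefficients on the right-hand side must be divided by $s_N$, and one carefully identifies which terms survive in the limit $N\to\infty$. Under the assumption $(p_N+q_N)/s_N\to 0$ together with $p_N,q_N>N-1$, one gets $N/s_N\to 0$, so that $(p_N+q_N)/s_N$, $(l-1)/s_N$, and $N/s_N$ all vanish. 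Since $1-b_N^2=0$, all terms carrying this factor drop. The surviving asymptotics are $a_N(p_N-q_N-b_N(p_N+q_N-2(l-1)))/s_N=2(p_N-(l-1))/N\to 2\hat p$ and $-2a_Nb_N N/s_N=2$. Thus $\frac{d}{dt}\tilde S_{N,1}\to 2\hat p$ and for $l\ge 2$,
\begin{equation*}
\frac{d}{dt}\tilde S_{N,l}(t)\;\xrightarrow[N\to\infty]{}\;
l\Bigl(2\hat p\, S_{l-1}(t)+2\sum_{k=0}^{l-2}S_k(t)S_{l-1-k}(t)\Bigr),
\end{equation*}
which by induction on $l$ yields well-defined limiting moments $S_l(t)$ with $S_0\equiv 1$, $S_1(t)=S_1(0)+2\hat p t$.

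Next, I would verify the Carleman condition \eqref{Carleman} for $(S_l(t))_l$ by an induction estimate $|S_l(t)|\le (R(1+t)l)^l$ of the same style as in \eqref{ode_semicirc_pf_eq1}, so that the moment-convergence theorem provides unique probability measures $\mu_t$ with these moments and so that $\mu_{N,t/s_N}\to\mu_t$ weakly.

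To identify $\mu_t$, I would apply the same limiting procedure to the Stieltjes PDE \eqref{stieltjes-pde}, rescaled by $s_N$. Interchanging the $N\to\infty$ limit with derivatives in $t,z$ is justified via the Laurent series argument of Proposition 2.9 in \cite{VW1}. The surviving terms give
\begin{equation*}
G_t(t,z)=\bigl(2-2\hat p-4zG(t,z)\bigr)G_z(t,z)-2G(t,z)^2,\qquad G(0,z)=G_\mu(z).
\end{equation*}
Passing to the R-transform via \eqref{R-Cauchy} and $z=R+1/G$ turns this into the linear first-order PDE
\begin{equation*}
R_t(t,z)=2\hat p+4z\,R(t,z)+2z^2 R_z(t,z),\qquad R(0,z)=R_\mu(z).
\end{equation*}

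Finally I would solve this PDE by the method of characteristics. The characteristics satisfy $\dot z=-2z^2$, hence $z(t)=z_0/(1+2z_0 t)$, along which $\dot R=4z R+2\hat p$. Using the integrating factor $(1+2z_0 t)^{-2}$ and inverting the characteristic via $z_0=z/(1-2tz)$, $1+2z_0t=1/(1-2tz)$, one obtains
\begin{equation*}
R(t,z)=(1-2tz)^{-2}R_\mu\!\left(\frac{z}{1-2tz}\right)+\frac{2t}{1-2tz}+\frac{2(\hat p-1)t}{1-2tz}.
\end{equation*}
By Theorem 4.8 of \cite{VW1} together with the scaling rule \eqref{R-scaling}, the first two summands equal $R_{(\mu_{sc,2\sqrt{2t}}\boxplus(\sqrt\mu)_{\text{even}})^2}(z)$, while the last summand is $R_{\mu_{MP,\hat p-1,2t}}(z)$ by \eqref{r-transform-mp}. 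Additivity of the R-transform under $\boxplus$ then yields the claimed identification. The main obstacle I expect is bookkeeping for the Carleman bound under this non-exponential scaling (the absence of the damping factor $e^{-lt}$ that was available in Theorem \ref{MP_ODE_thm}) and the careful justification that the lower-order terms in the moment ODE truly vanish uniformly in $l$; this is where the condition $(p_N+q_N)/s_N\to 0$ is essential.
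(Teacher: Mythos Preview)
Your proposal is correct and follows essentially the same route as the paper. The paper derives exactly your limiting moment recursion $S_l(t)=S_l(0)+2l\int_0^t\bigl(\hat p\,S_{l-1}(s)+\sum_{k=0}^{l-2}S_k(s)S_{l-1-k}(s)\bigr)\,ds$ from \eqref{moment-ODE1}, \eqref{moment-ODE2} and then simply cites Section~4 of \cite{VW1} (in particular Lemma~4.3 and Theorem~4.8 there) for the Carleman bound and the identification of the limit; your explicit derivation of the Stieltjes PDE $G_t=(2-2\hat p-4zG)G_z-2G^2$, the R-transform PDE $R_t=2\hat p+4zR+2z^2R_z$, and its characteristic solution is precisely the content of that reference, so you have unpacked what the paper leaves packaged.
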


\begin{proof}
  The proof is analogous to that of Theorem \ref{MP_ODE_thm}. In fact, 
  the recurrence relations \eqref{moment-ODE1}, \eqref{moment-ODE2} show
   that here the moments $\tilde S_{N,l}(t):=S_{N,l}(t/s_N)$ of
   $\mu_{N,t/s_N}$ satisfy
$$\tilde S_{N,0}\equiv 1, \quad\quad
	\frac{d}{dt}\tilde S_{N,1}=-\frac{p_N+q_N}{s_N}\tilde S_{N,1}(t)+\frac{a_N(p_N-q_N-b_N(p_N+q_N))}{s_N}\to 2\hat{p}$$
and, for $l\ge2$,	        
\begin{align}\label{moment-ODE-MP-2}	\frac{d}{dt}	\tilde	S_{N,l}(t)&=
		l\Bigl[	\frac{(p_N-q_N-b_N(p_N+q_N-2(l-1)))a_N}{s_N}\tilde S_{N,l-1}(t)-\frac{p_N+q_N-(l-1)}{s_N}\tilde S_{N,l}(t)\notag\\
                  &-\frac{a_N^2(1-b_N^2)}{s_N}(l-1)\tilde S_{N,l-2}(t)
		  +\frac{Na_N^2}{s_N}(1-b_N^2)\sum_{k=0}^{l-2}\tilde S_{N,k}(t)\tilde S_{N,l-2-k}(t)\notag\\
                  &-\frac{N}{s_N}\sum_{k=0}^{l-2}\tilde S_{N,k+1}(t)\tilde S_{N,l-1-k}(t)
		-2\frac{a_Nb_N N}{s_N}\sum_{k=0}^{l-2}\tilde S_{N,k}(t)\tilde S_{N,l-1-k}(t)\Bigr]\notag\\
                &\overset{N\to\infty}{\sim} 2l\hat{p}\tilde{S}_{N,l-1}(t)
                +2l\sum_{k=0}^{l-2}\tilde{S}_{N,k}(t)\tilde{S}_{N,l-1-k}(t).
                	\end{align}
Our starting conditions and induction  show that the 
$\tilde S_{N,l}(t)$ tend  to some  functions $S_l(t)$  with
\begin{equation}\label{recurrence-MP-limit-2}
  S_0\equiv1,\quad
		S_1(t)=S_1(0)+2\hat{p}t\,,\quad
		S_l(t)=S_l(0)+2l\int_0^t\left(\hat{p}S_{l-1}(s)+\sum_{k=0}^{l-2}S_k(s)S_{l-1-k}(s)
		\right)ds
                \end{equation}
for $l\ge2$ and $t\ge0$. The computations in Section 4 of \cite{VW1} (see in particular the proofs of Lemma 4.3 and Theorem 4.8 there)
now yield the claim similar to the proof of Theorem \ref{semicirc_ODE_thm}.
\end{proof}

        A slight modification of the proof of Theorem \ref{MP_ODE_thm}  in combination with the assertion about the stationary case in
        Theorem \ref{main-solutions-ode}  leads to the following limit result on the zeros of the Jacobi polynomials
        which was derived in \cite{DS} by different methods. As the modification is completely analogous to the relations between Theorems
        \ref{semicirc_ODE_thm-zeros} and \ref{semicirc_ODE_thm}, we skip the proof.

        \begin{theorem}\label{MP_ODE_thm-zeros}
Consider sequences $p_N,q_N$ with 
$$
  \lim_{N\to\infty} p_N/N=:\hat{p}\in [1,\infty[,\quad\lim_{N\to\infty}q_N/N=\infty,$$
and define the norming constants
 $ b_N:=-1\,,\quad   a_N:=q_N/N$.
 
  Let $-1<z_1^N<\ldots<z_N^N<1$ be the ordered zeros of the Jacobi polynomials $P_N^{(q_N-N,p_N-N)}$.
  Then all moments of the empirical measures
  $$\tilde \mu_{N}	:=\frac{1}{N}\sum_{i=1}^N \delta_{a_N (z_i^N-b_N)}$$
  tend to those of
\begin{equation}\label{limit-measure-MP-case-zero}
		\left(\mu_{SC,2\sqrt{2}}\right)^2
		\boxplus \mu_{MP,\hat{p}-1,2}= \mu_{MP,\hat{p},2}.
		\end{equation}
 In particular, the $\tilde \mu_{N}$ tend weakly to 
   $\mu_{MP,\hat{p},2}$.
\end{theorem}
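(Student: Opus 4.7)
The strategy is to adapt the proof of Theorem \ref{semicirc_ODE_thm-zeros} to the Marchenko--Pastur regime (\ref{marchenko_param})--(\ref{marchenko_param1}), exploiting the fact that, by Theorem \ref{main-solutions-ode}, the stationary point of the ODE \eqref{ODE_main} is precisely the vector of ordered zeros of $P_N^{(q_N-N,p_N-N)}$. I would start the ODE at the degenerate point $x_N(0) := (b_N,\dots,b_N) = (-1,\dots,-1) \in \partial A_N$ (admissible by Theorem \ref{main-solutions-ode}), so that $S_{N,0}(0) = 1$ and $S_{N,l}(0) = 0$ for $l \geq 1$, and then read off the moments of $\tilde\mu_N$ as the stationary values $\tilde S_{N,l}(\infty) := \lim_{t \to \infty} \tilde S_{N,l}(t)$.

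For each fixed $N$ I would establish existence of $\tilde S_{N,l}(\infty)$ by induction on $l$, applying dominated convergence to the integral representation \eqref{moment-ODE-mp} (substitute $s \mapsto t-s$ and exploit exponential decay of the prefactor, valid for $N$ large compared to $l$). Passing to the limit $t \to \infty$ in \eqref{moment-ODE-mp} gives the algebraic fixed-point identity
\begin{equation*}
\begin{split}
(p_N+q_N-l+1)\tilde S_{N,l}(\infty)
={}& 2a_N(p_N - 2(l-1))\tilde S_{N,l-1}(\infty) + 2 a_N N \sum_{k=0}^{l-2}\tilde S_{N,k}(\infty)\tilde S_{N,l-1-k}(\infty) \\
& - N \sum_{k=0}^{l-2}\tilde S_{N,k+1}(\infty)\tilde S_{N,l-1-k}(\infty).
\end{split}
\end{equation*}
Dividing by $p_N+q_N$ and using $a_N = q_N/N$, $p_N/N \to \hat p$, $q_N/N \to \infty$ (so that $2a_N p_N/(p_N+q_N) \to 2\hat p$, $2a_N N/(p_N+q_N) \to 2$, and $N/(p_N+q_N) \to 0$), a second induction on $l$ produces the limits $S_l(\infty) := \lim_N \tilde S_{N,l}(\infty)$ together with the recurrence
\begin{equation*}
S_0(\infty) = 1, \qquad S_l(\infty) = 2\hat p\, S_{l-1}(\infty) + 2 \sum_{k=0}^{l-2} S_k(\infty) S_{l-1-k}(\infty),\quad l \geq 1.
\end{equation*}

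To identify the limit, I would compare with $\mu_{MP,\hat p,2}$: its Cauchy transform satisfies $2 z G^2 - (z + 2(1-\hat p))G + 1 = 0$ (derived from \eqref{r-transform-mp} via \eqref{R-cauchy-connection}), and expanding $G(z) = \sum_{l \geq 0} m_l z^{-l-1}$ in powers of $1/z$ and matching coefficients yields exactly the same recurrence for the moments $m_l$. Carleman-type growth bounds $|S_l(\infty)| \leq (Rl)^l$, propagated through the induction as in the proof of Theorem \ref{semicirc_ODE_thm}, ensure the moment problem is determinate, so $\tilde\mu_N$ converges in moments, and hence weakly, to $\mu_{MP,\hat p,2}$. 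The auxiliary identity $(\mu_{SC,2\sqrt 2})^2 \boxplus \mu_{MP,\hat p - 1, 2} = \mu_{MP,\hat p, 2}$ stated in the theorem is then a consistency check: a direct R-transform calculation gives $(\mu_{SC,r})^2 = \mu_{MP,1,r^2/4}$, so the left-hand side reduces to $\mu_{MP,1,2} \boxplus \mu_{MP,\hat p-1,2} = \mu_{MP,\hat p,2}$ by \eqref{mp-product-formula}. The principal obstacle is justifying the interchange of the limits $t \to \infty$ and $N \to \infty$; as in Theorem \ref{semicirc_ODE_thm-zeros}, this is bypassed by performing the $t$-limit first at fixed $N$ (which, thanks to Theorem \ref{main-solutions-ode}, delivers the exact moments of $\tilde\mu_N$) and only afterwards sending $N \to \infty$ in the resulting algebraic recursion.
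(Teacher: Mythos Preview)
Your proposal is correct and follows essentially the same approach that the paper outlines (and then omits): start the ODE at $(b_N,\dots,b_N)$, let $t\to\infty$ at fixed $N$ using Theorem \ref{main-solutions-ode} and dominated convergence in \eqref{moment-ODE-mp} to obtain the stationary moment recursion, and then pass to the $N\to\infty$ limit---exactly the scheme of Theorem \ref{semicirc_ODE_thm-zeros} transplanted into the Marchenko--Pastur regime. Your direct identification of the limiting recursion with the moments of $\mu_{MP,\hat p,2}$ via the quadratic equation for its Cauchy transform, together with the check $(\mu_{SC,2\sqrt2})^2=\mu_{MP,1,2}$, is a clean way to close the argument.
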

	
        \section{Almost sure limit theorems for Jacobi processes}
        
In this section we  study the empirical measures of the renormalized Jacobi 
processes $(\tilde{X}_t)_{t\ge0}$ on $A_N$ from the introduction. Recall that these processes satisfy
\begin{equation}\label{SDE_process_normalized_sec4}
	d\tilde{X}_{t,i}
	=\frac{\sqrt{2}}{\sqrt{\kappa}}\sqrt{(1-\tilde{X}_{t,i}^2)}\,dB_{t,i} 
		+\left((p_N-q_N)-(p_N+q_N)\tilde{X}_{t,i}
		+2\sum_{j\colon j\neq i}\frac{1-\tilde{X}_{t,i}\tilde{X}_{t,j}}{\tilde{X}_{t,i}-\tilde{X}_{t,j}}\right)\,dt
\end{equation}
for 
$i=1,\dots,N$ with fixed $\kappa>0$.
\\ Let $a_N\subset]0,\infty[$ and $b_N\subset\mathbb{R}$.
As in  Section 3 we investigate the empirical measures
\begin{equation*}
	\mu_{N,t}
	:=\frac{1}{N}\sum_{i=1}^N\delta_{a_N(\tilde{X}_{t/s_N,i}-b_N)}
\end{equation*}
for $N\to\infty$ for appropriate scalings $a_N,b_N, s_N$.
We begin with the following almost sure  version of Theorem \ref{semicirc_ODE_thm}:

\begin{theorem}\label{semicirc_SDE_thm}
  Consider sequences $(p_N)_{N\in \mathbb N},(q_N)_{N\in \mathbb N}\subset]0,\infty]$
  with $\lim_{N\to\infty}p_N/N=\infty$ and $\lim_{N\to\infty}q_N/N=\infty$ such that
  $C:=\lim_{N\to\infty}p_N/q_N\geq0$ exists. Define
  $$a_N:=\frac{q_N}{\sqrt{Np_N}}, \quad b_N:=\frac{p_N-q_N}{p_N+q_N} \quad(N\in\mathbb N).$$
  
Let $\mu\in M^1(\mathbb {R})$ be a probability measure such that its  moments $c_l$  satisfy
	$\lvert c_l\rvert\leq(\gamma l)^l$ for $l\in\mathbb {N}_0$ with some constant $\gamma>0$.
  Moreover, let $(x^N)_{N\in\mathbb {N}}=((x_{1}^N,\dots,x_{N}^N))_{N\in\mathbb {N}}$   be a sequence of starting vectors
$x_N\in A_N$  such that all moments of the empirical measures
	\begin{equation*}
		\mu_{N,0}:=\frac{1}{N}\sum_{i=1}^N\delta_{a_N( x_i^N-b_N)}
	\end{equation*}
        tend  to those of $\mu$ for $N\to\infty$.
Let $(\tilde{X}^N_{t})_{t\ge0}$ be the solutions of the SDEs \eqref{SDE_process_normalized_sec4} with start in $\tilde{X}^N(0)=x^N$ for $N\in\mathbb N$, $t\ge0$.
	Then for all $t>0$, all moments of the empirical measures
	\begin{equation*}
		\mu_{N,t/(p_N+q_N)}
		=\frac{1}{N}\sum_{i=1}^N \delta_{a_N(\tilde{X}_{t/(p_N+q_N),i}^N-b_N)}\end{equation*}
tend to those of the probability measures	$(e^{-t}\mu)
			\boxplus\left(\sqrt{1-e^{-2t}}\mu_{sc,4(1+C)^{-3/2}}\right)$ almost surely.
\end{theorem}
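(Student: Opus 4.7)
The plan is to reduce the stochastic statement to the deterministic one (Theorem \ref{semicirc_ODE_thm}) by applying It\^o's formula to the rescaled empirical moments and showing that both the It\^o correction and the martingale part vanish in the appropriate sense as $N\to\infty$. Concretely, set $\tilde{x}^N_{t,i}:=a_N(\tilde{X}^N_{t,i}-b_N)$ and $S_{N,l}(t):=\frac{1}{N}\sum_{i=1}^N(\tilde{x}^N_{t,i})^l$. By It\^o's formula applied to $(\tilde{x}^N_{t,i})^l$ using \eqref{SDE_process_normalized_sec4}, the $dt$-drift of $S_{N,l}(t)$ decomposes into the same expression as in the ODE case (see \eqref{moment-ODE1}, \eqref{moment-ODE2}) plus an It\^o correction coming from the quadratic variation
\[
d[\tilde{x}^N_{\cdot,i}]_t=\tfrac{2}{\kappa}a_N^2(1-\tilde{X}^N_{t,i}{}^2)\,dt=\tfrac{2}{\kappa}\bigl(a_N^2(1-b_N^2)-2a_Nb_N\tilde{x}^N_{t,i}-(\tilde{x}^N_{t,i})^2\bigr)\,dt,
\]
yielding an extra drift term of the form $\tfrac{l(l-1)}{\kappa}[a_N^2(1-b_N^2)S_{N,l-2}-2a_Nb_NS_{N,l-1}-S_{N,l}]$, plus a martingale $M_{N,l}(t)$.

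I would next perform the time change $t\mapsto t/(p_N+q_N)$ and verify that, under the scalings $a_N=q_N/\sqrt{Np_N}$, $b_N=(p_N-q_N)/(p_N+q_N)$ of Theorem \ref{semicirc_SDE_thm}, all coefficients of the It\^o correction decay to zero: indeed one checks that
\[
\frac{a_N^2(1-b_N^2)}{\kappa(p_N+q_N)}=\frac{4q_N^3}{\kappa N(p_N+q_N)^3}\longrightarrow 0
\]
because $q_N/(p_N+q_N)\to 1/(1+C)$ is bounded and $N\to\infty$; the $a_Nb_N/(p_N+q_N)$ and $1/(p_N+q_N)$ terms vanish in the same way. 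Hence the moment equation for $\tilde S_{N,l}(t):=S_{N,l}(t/(p_N+q_N))$ has exactly the same drift limit as in the deterministic case, and the induction argument of the proof of Theorem \ref{semicirc_ODE_thm} (using the Carleman bound $|S_l(t)|\leq(Rl)^l$) applies unchanged once the martingale is controlled.

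To handle the martingale $\widetilde M_{N,l}(t)$ arising after the time change, I would estimate its quadratic variation
\[
\bigl[\widetilde M_{N,l}\bigr]_t\leq\frac{2l^2}{\kappa N^2(p_N+q_N)}\int_0^t\sum_{i=1}^N(\tilde{x}^N_{s,i})^{2(l-1)}\bigl(a_N^2(1-b_N^2)+\text{lower order}\bigr)\,ds\leq \frac{C_l(t)}{N^2(p_N+q_N)}
\]
where $C_l(t)$ is controlled via the a-priori moment bound on $\tilde S_{N,2(l-1)}$ (boundedness uniformly in $N$ follows inductively from the Carleman argument). Doob's $L^2$-inequality then gives $\mathbb{P}\bigl(\sup_{s\leq t}|\widetilde M_{N,l}(s)|>\varepsilon\bigr)\leq C_l(t)/(\varepsilon^2N^2(p_N+q_N))$, which is summable in $N$, so Borel--Cantelli yields $\sup_{s\leq t}|\widetilde M_{N,l}(s)|\to0$ almost surely. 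Combining this with the convergence of the deterministic drift part by the induction of Theorem \ref{semicirc_ODE_thm} shows that $\tilde S_{N,l}(t)\to S_l(t)$ almost surely for every $l\in\mathbb N_0$ and $t\geq 0$, and the limit measures are identified with $(e^{-t}\mu)\boxplus(\sqrt{1-e^{-2t}}\,\mu_{sc,4(1+C)^{-3/2}})$ exactly as in the proof of Theorem \ref{semicirc_ODE_thm}.

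The main obstacle is the uniform (in $N$) control of the higher moments $\tilde S_{N,l}$ that feeds both the induction and the martingale estimate; in the stochastic setting this requires an inductive argument in $l$ that simultaneously propagates the Carleman-type bound $|\tilde S_{N,l}(t)|\leq (Rl)^l$ almost surely, which is precisely the type of argument used for Bessel processes of type A in Section 2 of \cite{VW1}, and I would invoke it here in the same form.
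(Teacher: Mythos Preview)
Your proposal follows essentially the same route as the paper: apply It\^o's formula to the rescaled empirical moments, identify the drift with the ODE drift of \eqref{moment-ODE1}--\eqref{moment-ODE2} plus an It\^o correction that vanishes under the scaling, control the martingale part via its quadratic variation and Borel--Cantelli, and then run the deterministic induction from Theorem~\ref{semicirc_ODE_thm}. The one substantive difference is how you propose to obtain the uniform-in-$N$ moment control that you correctly flag as the main obstacle. You suggest propagating an almost-sure Carleman-type bound $|\tilde S_{N,l}(t)|\le(Rl)^l$ by induction on $l$, but this does not close: the quadratic variation of $\widetilde M_{N,l}$ involves $\tilde S_{N,2l-2}$, and for $l\ge3$ one has $2l-2>l$, so controlling the martingale at level $l$ requires a moment bound at a \emph{higher} level. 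The paper avoids this circularity by bounding \emph{expectations} instead (Lemma~\ref{SDE_lem}): in the integral equation \eqref{stoch_moment_ode_sol} for $S_{N,l}$ the drift involves only $S_{N,k}$ with $k\le l$, and Jensen's inequality gives $|S_{N,k}S_{N,l-1-k}|\le 1+S_{N,l}$, $|S_{N,k}S_{N,l-2-k}|\le 1+S_{N,l}$, etc., so that a closed Gronwall estimate on $\mathbb E[S_{N,l}]$ for even $l$ follows (odd moments are then handled via \eqref{even-odd}). This expectation bound is exactly what is needed to control $\mathbb E\bigl[[\widetilde M_{N,l}]_T\bigr]$ and then apply Burkholder--Davis--Gundy plus Borel--Cantelli. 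Apart from this point your argument matches the paper's.
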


Before proving this theorem with  the specific  scaling there, we first proceed as in Section 2 and
 investigate
arbitrary affine shifts of $\tilde{X}_t$ first.
For this, define $Y_t:=a_N(\tilde{X}_{t/((p_N+q_N))}-b_N)$ and
\begin{equation*}
	\mu_{N,t}=\frac{1}{N}\sum_{i=1}^N\delta_{Y_{t,i}}\,,\quad
	S_{N,l}(t)=\frac{1}{N}\sum_{i=1}^NY_{t,i}^l\,
\end{equation*}
which fits to the notation in our theorem.
For abbreviation, we now suppress the dependence of $p,q,a,b$ on $N$.
 Then by It\^{o}'s formula
\begin{equation}\label{trans_jacobi_sde}
\begin{split}
	dY_{t,i}
	={}&\sqrt{\frac{2}{\kappa(p+q)}}
		\sqrt{a^2-(Y_{t,i}+ab)^2}\,dB_{t,i}\\
		&+\left[a\left(\frac{p-q}{p+q}-b\right)-Y_{t,i}
		+\frac{2}{p+q}
		\sum_{j\colon j\neq i}\frac{a^2(1-b^2)-Y_{t,i}Y_{t,j}-ab(Y_{t,i}+Y_{t,j})}
			{Y_{t,i}-Y_{t,j}}\right]\,dt\,.
\end{split}
\end{equation}
Furthermore,
for $l\in\mathbb{N}$ we define
\begin{equation}\label{def-martingale}
	M_{l,t}:=\frac{l}{N}\sqrt{\frac{2}{\kappa(p+q)}}
		\int_0^t\sum_{i=1}^NY_{s,i}^{l-1}\sqrt{a^2-(Y_{s,i}+ab)^2}\,dB_{s,i}\,.
\end{equation}
Note that all  $(M_{l,t})_{t\ge0}$ are continuous martingale (w.r.t.~the usual filtration) since 
$\lvert Y_{t,i}\rvert\le a(1+\lvert b\rvert)$ holds for all $i,t$. 
The first empirical moment now satisfies
\begin{equation*}
\begin{split}
	&S_{N,1}(t)-S_{N,1}(0)\\
	={}&\sqrt{\frac{2}{\kappa(p+q)}}\frac{1}{N}\sum_{i=1}^N
		\int_{0}^t\sqrt{a^2-(Y_{s,i}+ab)^2}\,dB_{s,i}\\
		&+\frac{1}{N}\sum_{i=1}^N\int_0^t\left[a\left(\frac{p-q}{p+q}-b\right)-Y_{s,i}
		+\frac{2}{p+q}
		\sum_{j\colon j\neq i}\frac{a^2(1-b^2)-Y_{s,i}Y_{s,j}-ab(Y_{s,i}+Y_{s,j})}
			{Y_{s,i}-Y_{s,j}}\right]\,ds\\
	={}&\int_0^t-S_{N,1}(s)+a\left(\frac{p-q}{p+q}-b\right)\,ds+M_{1,t}\,.
\end{split}
\end{equation*}
This is a linear stochastic differential equation of the form
\begin{equation}\label{linearSDE}
  f(t)-f(0)=\int_0^t(\lambda f(s)+g(s))\,ds+h(t),\end{equation}
  where, in our case, 
$$\lambda=-1,\quad
  f(t)=S_{N,l}(t),,\quad g(t)=a\left(\frac{p-q}{p+q}-b\right),\quad
h(t)=M_{1,t}.$$
As the solution of (\ref{linearSDE}) is given by
\begin{equation}\label{int_eq_lin}
	f(t)
	= e^{\lambda t}\left( f(0)+\int_0^te^{-\lambda s}\left(g(s)+\lambda h(s)\right)\,ds
		\right)+h(t),
\end{equation}
we have
\begin{equation}\label{stoch_moment_ode_sol_1}
	S_{N,1}(t)
	=e^{-t}\left(S_{N,1}(0)+\int_0^te^{s}\left(a\left(\frac{p-q}{p+q}-b\right)-M_{1,s}
		\right)\,ds\right)+M_{1,t}\,.
\end{equation}
By another application of It\^{o}'s formula the higher empirical moments satisfy
\begin{align}\label{stoch_moment_ode}
	&S_{N,l}(t)-S_{N,l}(0)\\
	={}&\frac{1}{N}\sqrt{\frac{2}{\kappa(p+q)}}
		\sum_{i=1}^N\int_{0}^tlY_{s,i}^{l-1}\sqrt{a^2-(Y_{s,i}+ab)^2}\,dB_{s,i}\notag\\
		&+\frac{l}{N}\sum_{i=1}^N\int_0^tY_{s,i}^{l-1}\left[a\left(\frac{p-q}{p+q}-b\right)-Y_{s,i}
		+\frac{2}{p+q}\sum_{j\colon j\neq i}\frac{a^2(1-b^2)-Y_{s,i}Y_{s,j}-ab(Y_{s,i}+Y_{s,j})}
			{Y_{s,i}-Y_{s,j}}\right]ds\notag\\
		&+\frac{1}{N}\sum_{i=1}^N\frac{2}{\kappa(p+q)}
		\int_{0}^tl(l-1)Y_{s,i}^{l-2}\left(a^2-(Y_{s,i}+ab)^2\right)\,ds\notag\\
	={}&M_{l,t}+\int_0^tF_l(s)\,ds
		-\frac{2l(l-1)}{\kappa(p+q)}
		\int_{0}^tS_{N,l}(s)+2abS_{N,l-1}(s)-a^2(1-b^2)S_{N,l-2}(s)\,ds\,,\notag
\end{align}
where by the calculations in \eqref{moment-ODE1} and \eqref{moment-ODE2}
\begin{align*}
	F_l
	={}&- l\Bigl[\left(1-\frac{l-1}{p+q}\right)S_{N,l}
		-a\left(\frac{p-q}{p+q}-b\left(1-2\frac{l-1}{p+q}\right)\right)S_{N,l-1}\\
		&\hphantom{-l}\quad+\frac{a^2(1-b^2)(l-1)}{p+q}S_{N,l-2}
		 -\frac{Na^2(1-b^2)}{p+q}\sum_{k=0}^{l-2}S_{N,k}S_{N,l-2-k}\\
		&\hphantom{-l}\quad+\frac{N}{p+q}\sum_{k=0}^{l-2}S_{N,k+1}S_{N,l-1-k}
			+\frac{2bNa}{p+q}\sum_{k=0}^{l-2}S_{N,k}S_{N,l-1-k}
                  \Bigr].
\end{align*}
Rearranging \eqref{stoch_moment_ode} we obtain
\begin{equation*}
	S_{N,l}(t)-S_{N,l}(0)
	=\int_0^tC_l
		S_{N,l}(s)+f_l(S_{N,1}(s),\dots,S_{N,l-1}(s))\,ds+M_{l,t}\,,
\end{equation*}
with
\begin{equation}\label{const-cl}
  C_l:=-l\left(1+\frac{l-1}{p+q}\left(\frac{2}{\kappa}-1\right)\right)\end{equation}
  and 
\begin{equation*}
\begin{split}
	&f_l(S_{N,1},\dots,S_{N,l-1})\\
	={}&-l\left(-a\left(\frac{p-q}{p+q}
		-b\left(1+\frac{2(l-1)}{p+q}\left(\frac{2}{\kappa}-1\right)\right)\right)S_{N,l-1}
		-\frac{a^2(1-b^2)(l-1)}{p+q}\left(\frac{2}{\kappa}-1\right)S_{N,l-2}\right.\\
	&\hphantom{-l(}\left.-\frac{Na^2(1-b^2)}{p+q}\sum_{k=0}^{l-2}S_{N,k}S_{N,l-2-k}
		+\frac{N}{p+q}\sum_{k=0}^{l-2}S_{N,k+1}S_{N,l-1-k}
		+\frac{2bNa}{p+q}\sum_{k=0}^{l-2}S_{N,k}S_{N,l-1-k}\right)\,.
\end{split}
\end{equation*}
Hence, by \eqref{int_eq_lin}, 
\begin{equation}\label{stoch_moment_ode_sol}
	S_{N,l}(t)
	=e^{C_lt}\left(S_{N,l}(0)+\int_0^te^{-C_ls}
		\left(f_l(S_{N,1}(s),\dots,S_{N,l-1}(s))+C_lM_{l,s}\right)\,ds
		\right)+M_{l,t}\,.
\end{equation}
For the proof of Theorem \ref{semicirc_SDE_thm} and further limit theorems the following observation is crucial.

\begin{lemma}\label{SDE_lem}
	Let $T>0$. Let $p_N,q_N,a_N,b_N$  as in Theorem \ref{semicirc_SDE_thm} or 
	Theorem \ref{MP_SDE_thm} below. Assume that $\lim_{N\to\infty}S_{N,l}(0)$ exists
	for all $l\in\mathbb{N}$.
	Then for all $l\in\mathbb{N}$ the martingales $(M_{l,t})_{t\ge0}$ from (\ref{def-martingale})  converge 
	uniformly to $0$ on $[0,T]$ a.s..
\end{lemma}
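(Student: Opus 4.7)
The plan is to apply Doob's $L^2$ maximal inequality to the continuous martingale $(M_{l,t})_{t\ge 0}$, and to show that $E[\langle M_{l,\cdot}\rangle_T]\to 0$ at a rate summable in $N$, so that Markov's inequality together with Borel--Cantelli yields the desired almost-sure uniform convergence.

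First I would compute the quadratic variation explicitly. By the It\^o isometry applied to \eqref{def-martingale}, using the independence of the $N$ components of the driving Brownian motion,
\[\langle M_{l,\cdot}\rangle_t = \frac{2l^2}{N^2\kappa(p_N+q_N)}\int_0^t\sum_{i=1}^N Y_{s,i}^{2(l-1)}\bigl(a_N^2-(Y_{s,i}+a_Nb_N)^2\bigr)\,ds.\]
The identity $a_N^2-(Y_{s,i}+a_Nb_N)^2=a_N^2\bigl(1-\tilde X^2_{s/(p_N+q_N),i}\bigr)\in[0,a_N^2]$ together with $\sum_i Y_{s,i}^{2(l-1)} = N\, S_{N,2(l-1)}(s)$ gives
\[\langle M_{l,\cdot}\rangle_T \;\le\; \frac{2l^2 a_N^2 T}{N\kappa(p_N+q_N)}\,\sup_{s\in[0,T]}S_{N,2(l-1)}(s).\]
A direct calculation shows that under the scaling of Theorem \ref{semicirc_SDE_thm} the prefactor $a_N^2/(N(p_N+q_N))=q_N^2/(N^2 p_N(p_N+q_N))$ is of order $1/N^2$ whenever $C=\lim p_N/q_N>0$; the remaining boundary regime and the Marchenko--Pastur scaling of Theorem \ref{MP_SDE_thm} are handled by the same computation.

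Next I would establish the a priori uniform moment estimate
\[\sup_N\sup_{t\in[0,T]}E\bigl[S_{N,k}(t)^2\bigr] <\infty \qquad (k\in\mathbb N_0)\]
by strong induction on $k$. The identity \eqref{stoch_moment_ode_sol} writes $S_{N,k}(t)$ as the sum of an exponentially damped initial term, a drift $f_k$ quadratic in the lower-order moments $S_{N,j}$ for $j<k$, and the martingale $M_{k,t}$. Taking $L^2$-norms, applying Cauchy--Schwarz and the induction hypothesis controls the drift; the martingale part is bounded via the displayed estimate in terms of empirical moments of order $\le 2k-2$, which by Jensen's inequality reduce to the $L^1$-moments produced by a parallel $L^1$-induction using the crude deterministic bound $|S_{N,k}(t)|\le(a_N(1+|b_N|))^k$ inherited from $\tilde X\in[-1,1]$. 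Gronwall's inequality closes the recursion, the base cases $k=0,1$ being immediate from $S_{N,0}\equiv 1$ and \eqref{stoch_moment_ode_sol_1}. Combining everything, Doob's $L^2$ maximal inequality gives $E[\sup_{t\le T}M_{l,t}^2]\le 4 E[\langle M_{l,\cdot}\rangle_T] = O(1/N^2)$, and Borel--Cantelli then delivers the a.s.\ statement.

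The main technical obstacle is precisely this moment-bound induction: for $l\ge 2$ one has $2(l-1)\ge l$, so the martingale estimate at order $l$ involves empirical moments of strictly higher order than $l$ itself, and a naive induction on $l$ fails to close. The resolution is to run the induction jointly over all orders, using the coarse deterministic pointwise bound for the highest-order moments in an initial $L^1$-pre-induction and then refining to an $L^2$-estimate, so that the sharp decay of the prefactor $a_N^2/(N(p_N+q_N))$ can ultimately be exploited.
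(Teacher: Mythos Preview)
Your overall architecture---compute the quadratic variation, establish a uniform moment bound, apply a maximal inequality plus Markov, then Borel--Cantelli---matches the paper's. The defect lies in the moment estimate, and the resolution you propose for the circularity does not work. The ``crude deterministic bound'' $|S_{N,k}(t)|\le (a_N(1+|b_N|))^k$ is useless here because $a_N\to\infty$ under both scalings; feeding it into your displayed estimate produces, for $l\ge 2$, a bound on $E[\langle M_l\rangle_T]$ of order $a_N^{2l}/(N(p_N+q_N))\to\infty$, so nothing summable comes out and your $L^1$/$L^2$ scheme never closes. The paper breaks the circularity by a different mechanism: it takes the \emph{expectation} of \eqref{stoch_moment_ode_sol} for \emph{even} $l$ only, so that the martingale contribution vanishes outright, and then uses Jensen's inequality to bound every product appearing in $f_l$ by $1+S_{N,l}$ \emph{itself} (for instance $|S_{N,k}S_{N,l-1-k}|\le 1+S_{N,l}$), rather than by lower-order quantities. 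Gronwall then closes a scalar inequality for $e^{-C_l t}E[S_{N,l}(t)]$ with no reference to any other moment; odd $l$ follow afterwards from the even case by the same Jensen trick. There is no induction on $l$ in this step at all, which is why moments of order $2l-2$ never have to be controlled before moments of order $l$.

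A second, smaller error: in the Marchenko--Pastur scaling your blanket estimate $a_N^2-(Y_{s,i}+a_Nb_N)^2\le a_N^2$ yields the prefactor $a_N^2/(N(p_N+q_N))\sim q_N/N^3$, which is \emph{not} $O(N^{-2})$ since $q_N/N\to\infty$. The paper instead exploits $b_N=-1$ and $Y_{s,i}\ge 0$ to write $a_N^2-(Y_{s,i}-a_N)^2=2a_NY_{s,i}-Y_{s,i}^2\le 2a_NY_{s,i}$, which leads to the bound $\frac{a_N}{N(p_N+q_N)}\int_0^T E[S_{N,2l-1}(s)]\,ds$ with prefactor genuinely of order $1/N^2$.
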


\begin{proof}
	In a first step we show that the sequence $(E[\lvert S_{N,l}(t)\rvert])_{N\in\mathbb{N}}$
	is uniformly bounded on $[0,T]$. Here we first study the case 
	$l\in2\mathbb{N}$. By \eqref{stoch_moment_ode_sol} and our assumptions on $p,q,a,b$ it 
	holds, that there are
	non-negative bounded sequences $d_1(N),\dots,d_5(N)$ of numbers such that
	\begin{equation*}
	\begin{split}
		&E(S_{N,l}(t))\\
		\leq{}& e^{C_lt}\left(S_{N,l}(0)
			+\int_0^te^{-C_ls}\left(d_1E\left[\lvert S_{N,l-1}(s)\rvert\right]
			+d_2\left[\lvert S_{N,l-2}(s)\rvert\right]
			+d_3\sum_{k=0}^{l-2}E\left[\lvert S_{N,k}(s)S_{N,l-2-k}(s)\rvert\right]\right.
			\right.\\
			&\left.\left.\hphantom{ e^{C_lt}\left(S_{N,l}(0)+\int_0^t\right.}
			+d_4\sum_{k=0}^{l-2}E\left[\lvert S_{N,k+1}S_{N,l-1-k}(s)\rvert\right]
			+d_5\sum_{k=0}^{l-2}E\left[\lvert S_{N,k}(s)S_{N,l-1-k}(s)\rvert\right]\right)
			\,ds\right)\,.
	\end{split}
	\end{equation*}
	Moreover, by the triangle  inequality and Jensen's inequality, 
	\begin{equation}\label{even-odd}
		\lvert S_{N,l-1}(s)\rvert
		\leq \frac{1}{N}\sum_{i=1}^N\lvert Y_{s,i}\rvert^{l-1}
		\leq \left(\frac{1}{N}\sum_{i=1}^N Y_{s,i}^l\right)^{\frac{l-1}{l}}
		\leq 1+S_{N,l}(s)\,.
	\end{equation}
	By the same reasons, we also have
	\begin{equation*}
		\lvert S_{N,k}(s)S_{N,l-1-k}(s)\rvert
		\leq \left(\frac{1}{N}\sum_{i=1}^N\lvert Y_{s,i}\rvert^{l-1}\right)^{\frac{k}{l-1}}
			\left(\frac{1}{N}\sum_{i=1}^N\lvert Y_{s,i}\rvert^{l-1}
			\right)^{\frac{l-1-k}{l-1}}
		\leq 1+ S_{N,l}(s)\,,
	\end{equation*}
	$\lvert S_{N,k}(s)S_{N,l-2-k}(s)\rvert\leq S_{N,l-2}(s)\leq 1+S_{N,l}(s)$ and
	$\lvert S_{N,k+1}(s)S_{N,l-1-k}(s)\rvert\leq S_{N,l}(s)$. Thus there exist non-negative
	bounded sequences $\tilde{d}_1(N),\tilde{d}_2(N)$ of numbers such that
	\begin{equation*}
		e^{-C_lt}E[S_{N,l}(t)]
		\leq S_{N,l}(0)
			+\int_0^te^{-C_ls}\left(\tilde{d}_1+\tilde{d}_2E[S_{N,l}(s)]\right)\,ds\,.
	\end{equation*}
	By Gronwall's inequality we conclude that
	\begin{equation*}
		e^{-C_{l}t}E[S_{N,l}(t)]
		\leq \left(S_{N,l}(0)
			+\int_0^t\tilde{d}_1e^{-C_ls}\,ds\right)
			\cdot \exp\left(\tilde{d}_2t\right)
	\end{equation*}
	where the  $C_l$ from (\ref{const-cl}) remain bounded. Thus 
	$(E[S_{N,l}(t)])_{N\in\mathbb{N}}$ remains uniformly bounded for  $t\in[0,T]$ in the case of  even $l$.
        Finally, by (\ref{even-odd}) this also holds for $l$ odd.
	\\
	In a second step we now show the claim of the lemma.
        As the Brownian motions $B_i,B_j$ are independent for $i\neq j$, the 
	quadratic variation of $M_{l,t}$ is given by
	\begin{equation*}
		[M_{l}]_t
		=\frac{2}{N^2\kappa(p+q)}\sum_{i=1}^N
			\int_0^tl^2Y_{s,i}^{2l-2}\left(a^2-(Y_{s,i}+ab)^2\right)\,ds\,.
	\end{equation*}
	By the Tchebychev inequality  and the Burkholder-Davis-Gundy inequality there is a constant $c>0$ independent from
	$N$ such that
	\begin{equation*}
	\begin{split}
		P\left(\sup_{0\leq t\leq T}\lvert M_{l,t}\rvert>\epsilon\right)
		\leq{}&\frac{1}{\epsilon^2}E\left[\sup_{0\leq t\leq T}\lvert M_{l,t}\rvert^2\right]\\
		\leq{}&\frac{c}{\epsilon^2}E\left[[M_{l}]_T\right]\\
		={}&\frac{2cl^2}{N^2\kappa(p+q)}\sum_{i=1}^N
			\int_0^TE\left[Y_{s,i}^{2l-2}\left(a^2-(Y_{s,i}+ab)^2\right)\right]
			\,ds\\
		\leq{}&\frac{2cl^2a^2}{N^2\kappa(p+q)}\sum_{i=1}^N
			\int_0^TE\left[Y_{s,i}^{2l-2}\right]
			\,ds\\
		={}&\frac{2cl^2a^2}{N\kappa(p+q)}
			\int_0^TE\left[S_{N,2l-2}(s)\right]\,ds\,.
	\end{split}
	\end{equation*}
	Note that in the case $b_N\equiv 1$ as in Theorem \ref{MP_SDE_thm} we similarly 
	get the bound
	\begin{equation*}
		P\left(\sup_{0\leq t\leq T}\lvert M_{l,t}\rvert>\epsilon\right)
		\leq \frac{4cl^2a}{N\kappa(p+q)}
			\int_0^TE\left[S_{N,2l-1}(s)\right]\,ds\,.
	\end{equation*}
	If we choose $p,q$ and $a$ as in Theorem \ref{semicirc_SDE_thm} we have
	$\frac{a^2}{N(p+q)}\in\mathcal{O}(N^{-2})$. If we choose $p,q$ and $a$ as in Theorem
	\ref{MP_SDE_thm} we have $\frac{a}{N(p+q)}\in\mathcal{O}(N^{-2})$.
	By the first part of the proof we thus conclude that in either case
	$P\left(\sup_{0\leq t\leq T}\lvert M_{l,t}\rvert>\epsilon\right)\in\mathcal{O}(N^{-2})$ for each $\epsilon>0$.
	The claim now follows by the Borel-Cantelli lemma.
\end{proof}


We now turn to the specific scaling in Theorem \ref{semicirc_SDE_thm}:

\begin{proof}[Proof of Theorem \ref{semicirc_SDE_thm}]
	To keep formulas short we again suppress the dependence of $p,q,a,b$ on $N$.
	We define
        $$\mu_t:=(e^{-t}\mu)\boxplus\left(\sqrt{1-e^{-2t}}\mu_{sc,4(1+C)^{-3/2}}\right)$$ 
	with the moments $c_l(t):=\int_{\mathbb{R}}x^l\,d\mu_t(x)$.
	By the proof of Theorem \ref{semicirc_ODE_thm} we have
	$c_1(t)=e^{-t}c_1(0)$ and
	\begin{equation*}
		c_l(t)
		=e^{-lt}\left(c_l(0)+4l(1+C)^{-3}\int_0^te^{ls}\sum_{k=0}^{l-2}c_k(s)c_{l-2-k}(s)\,
			ds\right)\,,
			\quad l\geq2\,.
	\end{equation*}
	By induction we will show that the limits
	$S_l(t):=\lim_{N\to\infty}\int_{\mathbb{R}}x^l\,d\mu_{N,t/(p+q)}(x)$, $l\in\mathbb{N}$, 
	exist and satisfy the same recursion as the $c_l(t)$.\\
	Let $l=1$. By \eqref{stoch_moment_ode_sol_1}, our choice of $b_N$ and
	Lemma \ref{SDE_lem} we have
	$S_1(t):=\lim_{N\to\infty}S_{N,1}(t)=e^{-t}c_l$ locally uniformly in $t$ a.s.\\
	Let $l\geq2$. Note that $C_l$ in \eqref{stoch_moment_ode_sol} converges to $-l$. We now
	calculate the limit of $f_l(S_{N,1}(t),\dots,S_{N,l-1}(t))$. For this note that
	\begin{gather*}
		\lim_{N\to\infty}\frac{4l(l-1)ab}{\kappa(p+q)}=0\,,\;
		\lim_{N\to\infty}\frac{2l(l-1)a^2}{\kappa(p+q)}=0\,,\;
		\lim_{N\to\infty}a\left(\frac{p-q}{p+q}-b\left(1+\frac{2(l-1)}{p+q}
			\left(\frac{2}{\kappa}-1\right)\right)\right)=0
		\,,\\
		\lim_{N\to\infty}\frac{(1-b^2)a^2(l-1)}{p+q}\left(\frac{2}{\kappa}-1\right)=0\,,\;
		\lim_{N\to\infty}N/(p+q)=0\,,\;
		\lim_{N\to\infty}\frac{2bNa}{p+q}=0\,,\\
		\lim_{N\to\infty}\frac{Na^2(1-b^2)}{p+q}=4(1+C)^{-3}\,.\\
	\end{gather*}
	Hence, by our induction assumption, we have a.s. locally uniformly in $t$ that
	\begin{equation*}
		\lim_{N\to\infty}f_l(S_{N,1}(t),\dots,S_{N,l-1}(t))
		=4l(1+C)^{-3}\sum_{k=0}^{l-2}S_k(t)S_{l-2-k}(t)\,.
	\end{equation*}
	Thus by \eqref{stoch_moment_ode_sol} and Lemma \ref{SDE_lem}, the limit 
	$S_l(t)=\lim_{N\to\infty}S_{N,l}(t)$ exists and satisfies
	\begin{equation*}
		S_{l}(t)
		=e^{-lt}\left(S_l(0)+4l(1+C)^{-3}\int_0^te^{ls}\sum_{k=0}^{l-2}S_{k}(s)S_{l-2-k}(s)\,ds
			\right)\;\text{a.s.}\,,
	\end{equation*}
	so that the $S_l(t)$ satisfy the same recursion as the $c_l(t)$.\\
	This proves the claim  in the same way as 
	in the proof of Theorem	\ref{semicirc_ODE_thm}.
\end{proof}

By using the same technique we also readily get the following stochastic version of Theorem \ref{MP_ODE_thm};
please notice that here also Lemma 	\ref{SDE_lem} is available.

\begin{theorem}\label{MP_SDE_thm}
  Consider $p_N,q_N,a_N,b_N$ as in (\ref{marchenko_param}) and (\ref{marchenko_param1}).
  Let $\mu\in M^1([0,\infty[)$ be a probability measure such that its  moments $c_l$  satisfy
	$\lvert c_l\rvert\leq(\gamma l)^l$ for $l\in\mathbb {N}_0$ with some constant $\gamma>0$.
      Moreover, let $(x^N)_{N\in\mathbb {N}}=((x_{1}^N,\dots,x_{N}^N))_{N\in\mathbb {N}}$   be an associated
sequence of starting vectors
$x^N\in A_N$  as the the preceding results.

Let $\tilde{X}^N_{t}$ be the solutions of the SDEs \eqref{SDE_process_normalized_sec4} with start in $\tilde{X}^N(0)=x^N$ for $N\in\mathbb N$, $t\ge0$.
	Then for all $t>0$, all moments of the empirical measures
	\begin{equation*}
		\mu_{N,t/(p_N+q_N)}
		=\frac{1}{N}\sum_{i=1}^N \delta_{a_N(\tilde{X}_{t/(p_N+q_N),i}^N-b_N)}\end{equation*}
tend almost surely to those of the probability measures	
\begin{equation}\label{limit-measure-MP-case-sec4}
			\left(\mu_{SC,2\sqrt{2(1-e^{-t})}}
					\boxplus\left(\sqrt{e^{-t}\mu}\right)_{\text{even}}\right)^2
					\boxplus \mu_{MP,\hat{p}-1,2(1-e^{-t})}\,,\quad t>0\,.
		\end{equation}
\end{theorem}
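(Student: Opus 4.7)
The plan is to adapt the proof of Theorem \ref{semicirc_SDE_thm} almost verbatim, replacing the deterministic Wigner recurrence of Theorem \ref{semicirc_ODE_thm} by the Marchenko-Pastur recurrence \eqref{recurrence-mp-limit} established in the proof of Theorem \ref{MP_ODE_thm}. The underlying representations \eqref{stoch_moment_ode_sol_1} and \eqref{stoch_moment_ode_sol} of the empirical moments $S_{N,l}(t)$ as solutions of linear stochastic integral equations are derived in Section~4 for arbitrary $a_N, b_N$ and therefore apply directly to the MP scaling \eqref{marchenko_param1}.

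The first key ingredient is Lemma \ref{SDE_lem}, whose formulation explicitly covers the present case: its proof observes that under $b_N \equiv -1$ one has $1 - b_N^2 = 0$, which replaces one power of $a_N^2$ by $a_N$ in the Burkholder-Davis-Gundy bound and yields $\frac{a_N}{N(p_N+q_N)} \in \mathcal{O}(N^{-2})$ in the MP regime. Borel-Cantelli then forces $\sup_{t \in [0,T]} \lvert M_{l,t}\rvert \to 0$ a.s.\ for every $l \in \mathbb N$ and every $T > 0$. Granted this, the martingale contributions in \eqref{stoch_moment_ode_sol_1}, \eqref{stoch_moment_ode_sol} may be dropped in the limit.

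Second, I would compute the pointwise limits of the coefficients $C_l$ and of those appearing in $f_l$ under the MP scaling. Since $1 - b_N^2 \equiv 0$, every term carrying the factor $(1 - b_N^2)$ vanishes identically; the surviving nontrivial limits are
\[
a_N\!\left(\tfrac{p_N - q_N}{p_N + q_N} - b_N\right) = \tfrac{2 a_N p_N}{p_N + q_N} \to 2\hat p,\qquad
\tfrac{2 b_N N a_N}{p_N + q_N} \to -2,\qquad
\tfrac{N}{p_N + q_N} \to 0,\qquad C_l \to -l.
\]
An induction on $l$ combined with Lemma \ref{SDE_lem} applied to \eqref{stoch_moment_ode_sol_1} and \eqref{stoch_moment_ode_sol} then shows that the limits $S_l(t) := \lim_{N\to\infty} S_{N, l}(t/(p_N + q_N))$ exist a.s.\ locally uniformly in $t$ and satisfy exactly the recursion \eqref{recurrence-mp-limit}.

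Since this recursion with the initial moments of $\mu$ determines the limit uniquely, and since the Carleman argument carried out in the proof of Theorem \ref{MP_ODE_thm} already identifies that moment sequence with that of the measure in \eqref{limit-measure-MP-case-sec4}, the moment convergence theorem yields the claimed a.s.\ weak convergence of $\mu_{N, t/(p_N + q_N)}$. The only genuinely nontrivial obstacle is the a.s.\ exchange of the $N \to \infty$ limit with the It\^o integrals appearing in \eqref{stoch_moment_ode_sol}; this is precisely what Lemma \ref{SDE_lem} supplies via its Gronwall-based uniform bound on $E[\lvert S_{N,l}(t)\rvert]$ on $[0,T]$ together with Burkholder-Davis-Gundy plus Borel-Cantelli. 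Everything beyond that is coefficient bookkeeping that parallels the Wigner case word for word.
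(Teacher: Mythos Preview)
Your proposal is correct and is exactly the argument the paper intends: the paper itself does not spell out a proof for this theorem but simply remarks that it follows ``by using the same technique'' as Theorem \ref{semicirc_SDE_thm} together with Lemma \ref{SDE_lem}, and your write-up is precisely that technique with the MP-scaling coefficient limits inserted. Your identification of the surviving terms in $f_l$ (the $2\hat p\,S_{l-1}$ and $2\sum S_k S_{l-1-k}$ contributions) and the $\mathcal O(N^{-2})$ bound via $a_N/(N(p_N+q_N))$ matches the paper's reasoning exactly.
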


\begin{remark}
	We point out that by using the methods of the proof as above we also have stochastic
	versions
	of Theorems \ref{semicirc_ODE_thm3}, \ref{semicirc_ODE_thm3.1} and \ref{MP_ODE_thm2}.
	This means that in these theorems the moment convergence holds a.s. if replacing the 
	solution $x(t)$ of \eqref{ODE_main} by the rescaled Jacobi process $\tilde{X}_t$
	satisfying \eqref{SDE_process_normalized_sec4}.
\end{remark}

For some parameters $\kappa,p,q$, the solutions $(\tilde X_t)_{t\ge0}$ of the SDEs (\ref{SDE_process_normalized_sec4})
admit interpretations in terms of dynamic versions of MANOVA-ensembles over the fields $\mathbb F=\mathbb R,\mathbb C$ by 
Doumerc \cite{Do} as follows. Let $d=1,2$ be the real dimension of $\mathbb F$. Consider  Brownian motions $(Z_t^n)_{t\ge0}$
on the compact groups $SU(n,\mathbb F)$ with some suitable time scalings. Now take positive integers $N,p$ with $N\le p\le n$,
and denote the $N\times p$-block of a square matrix $A$ of size $n$ by $\pi_{N,p}(A)$. Moreover, let $\sigma(B)$ be the ordered spectrum
of some positive semidefinite matrix $B$. It is shown in \cite{Do} that then
$$\Bigl(\tilde X_t:= 2\cdot \sigma\Bigl( \pi_{N,p}(Z_t^N) \pi_{N,p}(Z_t^N)^*\Bigr)-1\Bigr)_{t\ge0}$$
is a diffusion on $A_N$ satisfying the SDE (\ref{SDE_process_normalized_sec4}) with the parameters $p\ge N$, $q:=n-p$, and $\kappa=d/2$.
Clearly, all of the preceding limit results in Section 4 can be applied in this case for suitable sequences  $p_N, n_N  $ of dimension
parameters depending on $N$.

We point out that this geometric interpretation of some Jacobi processes includes the interpretation
for the special case $n=p+N$, i.e., $q=N$, where the
Jacobi processes are  suitable projections of Brownian motions on the compact Grassmann manifolds with the dimension parameters $N,p$ over $\mathbb F$.
We also remark that this even works for the field of quaternions with $\kappa=d/2=2$; see \cite{HS} for the analytical background.

\section{Limit theorems in the noncompact case}

The Jacobi processes on compact alcoves in the preceding section admit analogues in a noncompact setting,
namely the so-called Heckman-Opdam Markov processes associated with root systems of type BC
introduced in Schapira \cite{Sch1, Sch2}.
Due to the close connections with the  Jacobi processes on compact alcoves above, we shall call these processes also
 Jacobi processes in  a noncompact setting.
For some parameters, these processes are related to Brownian motions on noncompact Grassmann manifolds over $\mathbb R, \mathbb C$,
and the quaternions similar to the comments in the end of the preceding section.
For the general background we refer to the monographs \cite{HO, HS} and references therein.

We here  derive analogues of the main results of the Sections 2--4 in this noncompact setting.
For this we first introduce these processes in a way which fits to the compact case. 
We fix some dimension $N\ge2$ and parameters $k_1,k_2\in \mathbb R$ and $k_3>0$ with $k_2\ge0$ and $k_1+k_2\ge 0$.
We define the (noncompact)
Heckman-Opdam Laplacians of type BC on the Weyl chambers
$$\tilde C_N:=\{w\in\mathbb R^N: \> 0\le w_1\le \ldots\le w_N\}$$ of type B by
\begin{align}\label{generator-noncompact-trig}
  L_{trig,k}f(w):=\Delta f(w)  
  +  \sum_{i=1}^N\Biggl(& k_1 \cth(w_i/2)+ 2k_2  \cth(w_i)\\
  &+k_3\sum_{j: j\ne i} \Bigl(\cth(\frac{w_i-w_j}{2})+\cth(\frac{w_i+w_j}{2})\Bigr)\Biggr)f_{x_i}(w)
\notag\end{align}
for functions $f\in C^2(\mathbb R^N)$ which are invariant under the associated Weyl group.
By \cite{Sch1, Sch2}, the  $L_{trig,k}$
are the generators of Feller diffusions $(W_t)_{t\ge0}$ on $C_N$ where the paths are reflected on the boundary.
We next use the transformation $x_i:=\cosh w_i$ ($i=1,\ldots,n$) with
$$x\in C_N:=\{x\in\mathbb R^N: \> 1\le x_1\le \ldots\le x_N\}.$$
The  diffusions $(W_t)_{t\ge0}$ on $\tilde C_N$ then are transformed into  Feller diffusions $(X_t)_{t\ge0}$ on $ C_N$ with 
reflecting boundaries and, by some elementary calculus, with the generators
\begin{equation}\label{generator-noncompact}
  L_kf(x):=\sum_{i=1}^N (x_i^2-1)f_{x_ix_i}(x)+
  \sum_{i=1}^N\Biggl( (k_1+2k_2+ 2k_3(N-1)+1)x_i+k_1
  +2k_3\sum_{j: j\ne i} \frac{x_ix_j-1}{x_i-x_j}\Biggr)f_{x_i}(x).
\end{equation}
As in the introduction, we redefine the parameters by
\begin{equation}\label{parameter-change-k-p-noncompact}
\kappa:=k_3>0, \quad q:= N-1+\frac{1+2k_1+2k_2}{2k_3}, \quad p:= N-1+\frac{1+2k_2}{2k_3}
\end{equation}
with $p, q>N-1$ and rewrite (\ref{generator-noncompact}) as
\begin{equation}\label{generator-noncompact2}
  L_kf(x):=\sum_{i=1}^N (x_i^2-1)f_{x_ix_i}(x)+
 \kappa \sum_{i=1}^N\Biggl((q-p) + (q+p)x_i
  +2\sum_{j: j\ne i} \frac{x_ix_j-1}{x_i-x_j}\Biggr)f_{x_i}(x).
\end{equation}
Moreover, we also consider the  transformed processes
$(\tilde X_{t}:=X_{t/\kappa})_{t\ge0}$ with the generators $\frac{1}{\kappa} L_k$ which then are the
unique strong solutions of the SDEs
\begin{equation}\label{SDE-alcove-normalized-noncompact}
  d\tilde X_{t,i} =\frac{\sqrt 2}{\sqrt\kappa } \sqrt{\tilde X_{t,i}^2-1}\> dB_{t,i} 
+\Bigl((q-p) +(q+p)\tilde X_{t,i} +
2\sum_{j: j\ne i}\frac{\tilde X_{t,i}\tilde X_{t,j}-1}{\tilde X_{t,i}-\tilde X_{t,j}}\Bigr)dt
\end{equation}
for $  i=1,\ldots,N$, a Brownian motion $(B_{t,1},\ldots,B_{t,N})_{t\ge0}$ on $\mathbb R^N$, and starting points  $x_0$ in the interior of $C_N$.

 For $\kappa=\infty$ and $p,q>N-1$, these SDEs degenerate to the ODEs 
\begin{equation}\label{ODE_main-noncompact}
		\frac{d}{dt}x_i(t)
		=(q-p)+(q+p)x_i(t)
			+2\sum_{j: j\neq i}\frac{x_i(t)
			x_j(t)-1}{x_i(t)-x_j(t)} \quad (i=1,\dots,N).
	\end{equation}
Please notice that the RHS of (\ref{ODE_main-noncompact}) is equal to the negative of the  RHS of (\ref{ODE_main}) in the compact case where
the solutions exist on  some different ``complementary'' domain. Theorem \ref{main-solutions-ode} here has the following form; it will be proved in the next section.

  \begin{theorem}\label{main-solutions-ode-noncompact}
          Let $N\in\mathbb{N}$ and $p,q> N-1$. Then for each
		 each $x_0\in C_N$ the ODE \eqref{ODE_main-noncompact} has a unique 
		 solution $x(t)$ for all $t\geq0$ in the following sense:
                 If $x_0$ is in the interior of $\tilde C_N$, then  $x(t)$ exists
                 also in the interior of $ C_N$ for all $t\geq0$.
                 Moreover, for $x_0\in\partial A_N$, 
		there is a unique continuous function $x:[0,\infty)\to C_N$ with $x(0)=x_0$ and
		  $x(t)$ in the interior of $\tilde C_N$ for $t>0$, where $x(t)$ satisfies \eqref{ODE_main-noncompact}.
  \end{theorem}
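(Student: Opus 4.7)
The plan is to mirror the strategy of Theorem \ref{main-solutions-ode} (proved in Section 6 for the compact case), with two issues new to the noncompact setting: ruling out finite-time escape to $+\infty$, and verifying that the single wall $\{x_1=1\}$ is strictly repelling just as the two walls were in $A_N$.

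On the interior of $C_N$ the RHS of \eqref{ODE_main-noncompact} is real-analytic, so Picard-Lindel\"of gives a unique local solution on a maximal interval $[0,T^\ast)$. The crucial algebraic observation is the antisymmetry
$$\sum_{i\ne j}\frac{x_ix_j-1}{x_i-x_j}=0,$$
which makes $S(t):=\sum_i x_i(t)$ satisfy the linear scalar ODE $\dot S=N(q-p)+(q+p)S$; its explicit solution is bounded on every $[0,T]$, and since $x_i\ge 1$ we have $x_N(t)\le S(t)-(N-1)$, so blow-up is excluded. At the lower wall, setting $x_1=1$ in the ODE gives $\dot x_1=2q-2(N-1)>0$ by the hypothesis $q>N-1$, so this wall is strictly repelling. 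For non-collision, fix $i<j$ and set $y=x_j-x_i$; isolating the terms $k=i$ from the $j$-sum and $k=j$ from the $i$-sum yields
$$\dot y=(q+p)y+2\sum_{k\ne i,j}\Bigl(\frac{x_jx_k-1}{x_j-x_k}-\frac{x_ix_k-1}{x_i-x_k}\Bigr)+\frac{4(x_ix_j-1)}{y},$$
with $x_ix_j-1\ge 0$ and strictly positive away from the corner $x_i=x_j=1$, while the middle sum is controlled as long as other spacings stay bounded below. Packaging these estimates into a Dyson-type Lyapunov function
$$V(x):=-\sum_{i=1}^N\log(x_i-1)-\sum_{i<j}\log(x_j-x_i)+\alpha\sum_{i=1}^N x_i^2$$
for a suitable $\alpha>0$, and checking $\dot V(x(t))\le C_1+C_2V(x(t))$ on $[0,T]$ using the $S$-bound from above, gives Gr\"onwall control on $V$, hence uniform lower bounds on $x_1-1$ and on $x_j-x_i$, and in particular $T^\ast=\infty$ with $x(t)\in\mathrm{int}(C_N)$ for all $t\ge 0$.

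For $x_0\in\partial C_N$ I would approximate $x_0$ by a sequence of interior points $x_0^{(n)}\to x_0$, obtain global interior solutions $x^{(n)}$, and pass to the limit. The $S$-bound and the Lyapunov estimate are uniform in $n$; crucially, for each $\varepsilon>0$ the repulsive terms push the trajectory off the boundary on a timescale independent of how close $x_0^{(n)}$ is to $\partial C_N$, yielding $n$-uniform lower bounds on $x_1^{(n)}(t)-1$ and on $x_j^{(n)}(t)-x_i^{(n)}(t)$ valid on $[\varepsilon,T]$. Arzel\`a-Ascoli then produces a locally uniform limit $x\in C([0,\infty),C_N)$ with $x(0)=x_0$ that lies in the interior and solves \eqref{ODE_main-noncompact} on $(0,\infty)$. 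The hardest step is uniqueness at the boundary, where continuity of the flow in initial conditions is not available a priori on $\partial C_N$. My plan here is to establish a monotonicity principle for the interior flow (if $x_i(0)\le y_i(0)$ componentwise then $x_i(t)\le y_i(t)$ for all $t$, by a standard Dyson-type differential-inequality comparison), sandwich any candidate boundary solution between two interior approximations, and deduce uniqueness by letting those approximations converge, using the entry behaviour for $t>0$ established above; an alternative route is to transfer the argument to the coordinates $x_i=\cosh w_i$ on $\tilde C_N$ and invoke the boundary analysis of \cite{Sch1,Sch2} for the stochastic version specialised to the freezing limit $\kappa\to\infty$.
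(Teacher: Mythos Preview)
Your approach is genuinely different from the paper's and, while plausible in outline, leaves several steps at the level of assertion.

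The paper bypasses direct analytic estimates by transforming to elementary symmetric polynomial coordinates $e(x)=(e_1^N(x),\ldots,e_N^N(x))$. The key observation is that \eqref{ODE_main-noncompact} becomes a \emph{linear} triangular ODE in these coordinates (the same computation as in \eqref{transformed_ODE} up to a global sign), with explicit solutions \eqref{solutions-symmetric-noncompact}. Non-explosion is then immediate, and since $e$ is a homeomorphism on $C_N$, uniqueness at the boundary reduces to uniqueness of the linear ODE. Entry into the interior for boundary starting points is proved by contradiction via the discriminant $\tilde D(\tilde e(t))$: this is a finite linear combination of exponentials, so if it vanishes on some $[0,t_0]$ it vanishes for all $t\in\mathbb R$. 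But the time-reversed trajectory coincides with the compact-case flow, which converges to the Jacobi-zero vector $z$ with $D(z)\ne 0$ as $t\to-\infty$ by Lemma~\ref{ODE_lem_app}, a contradiction.

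Your route via a Lyapunov function, Arzel\`a--Ascoli compactness, and a comparison principle is a standard template for interacting particle systems and could likely be made rigorous, but the hardest steps---the verification of $\dot V\le C_1+C_2V$, the uniform-in-$n$ entry estimates on $[\varepsilon,T]$, and the monotonicity principle needed for uniqueness---are only asserted, and each is nontrivial here because the interaction kernel $(x_ix_j-1)/(x_i-x_j)$ is not the pure Coulomb $1/(x_i-x_j)$. The paper's linearization sidesteps all of this: it gives global existence, uniqueness, and boundary entry in one stroke, and is short precisely because the time-reversal link to the compact case recycles the convergence result already established there. Your observation that $S(t)=\sum_i x_i(t)$ obeys a scalar linear ODE is exactly the $k=1$ case of the paper's identity; extending it to all $e_k^N$ is what makes the argument clean.
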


  For the solutions of (\ref{ODE_main-noncompact})
  we have the following local Wigner-type limit theorem which is completely analogous
to Theorem \ref{semicirc_ODE_thm3}.

  \begin{theorem}\label{semicirc_ODE_thm-noncompact}
Consider sequences $(p_N)_{N\in \mathbb N},(q_N)_{N\in \mathbb N}\subset ]0,\infty[$
    with $p_N,q_N>N-1$ for $N\ge 1$. Let $(b_N)_{N\in \mathbb N}\subset]1,\infty[$ be a sequence such that 
    $B:=\lim b_N\in[1,\infty]$ exists.

 Let  $(s_N)_{N\in \mathbb N}\subset]0,\infty[$ be a sequence of time scalings  with
    $$\lim_{N\to\infty}\frac{p_N+q_N}{\sqrt{Ns_N}}=0,$$
    and define the space scalings
$ a_N:=\sqrt{s_N/N}$.

       Let $\mu\in M^1(\mathbb {R})$ be a starting  measure such that its  moments $c_l$  satisfy
       $\lvert c_l\rvert\leq(\gamma l)^l$ for $l\in\mathbb {N}_0$ with some constant $\gamma>0$.
       Let $(x_N)_{N\in\mathbb {N}}$ be associated starting vectors with $x_N\in C_N$  as the preceding limit results.

       Let $x_N(t)$ be
       the solutions of the ODEs \eqref{ODE_main} with $x_N(0)=x_N$ for $N\in\mathbb N$.
Then for all $t>0$, all moments of the empirical measures
\begin{equation*}
		\mu_{N,t/(p_N+q_N)}
		=\frac{1}{N}\sum_{i=1}^N \delta_{a_N( x_i^N(t/s_N)-b_N)}\end{equation*}
 tend to those of  $   \mu\boxplus\mu_{sc,2\sqrt{(B^2-1)t}}$.
\end{theorem}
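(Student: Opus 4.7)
The plan is to adapt the proof of Theorem \ref{semicirc_ODE_thm3} to the noncompact setting; the required moment machinery is essentially the same up to bookkeeping of signs.

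First I would redo the computations of Section 2 for the noncompact ODE \eqref{ODE_main-noncompact}. Observing that the right-hand side of \eqref{ODE_main-noncompact} is exactly the negation of the right-hand side of \eqref{ODE_main}, the transformed ODEs for $\tilde{x}^N_i(t):=a_N(x^N_i(t)-b_N)$ satisfy the noncompact analogue of \eqref{ODE-tilde-x} with every term on the RHS sign-flipped. This in turn yields noncompact versions of \eqref{moment-ODE1} and \eqref{moment-ODE2}: the coefficient of $S_{N,l}$ becomes $+l(p+q-(l-1))$ instead of $-l(p+q-(l-1))$, and everywhere $(1-b_N^2)$ occurs in the compact recurrence one obtains $(b_N^2-1)$ in the noncompact recurrence, with the remaining coefficients undergoing the analogous sign changes.

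Next I would time-rescale by setting $\tilde{S}_{N,l}(t):=S_{N,l}(t/s_N)$ and pass to the limit $N\to\infty$ exactly as in the proof of Theorem \ref{semicirc_ODE_thm3}. Using $a_N^2=s_N/N$ we have $Na_N^2/s_N=1$, while the hypothesis $(p_N+q_N)/\sqrt{Ns_N}\to 0$ together with the fact that $a_N$ must stay bounded for $b_N\to B$ to be a meaningful scaling (which forces $s_N\gtrsim N$) ensures that $(p_N+q_N)/s_N\to 0$ as well. Hence all coefficients in the rescaled recurrence vanish asymptotically except for $\frac{Na_N^2(b_N^2-1)}{s_N}=b_N^2-1\to B^2-1$, yielding
\begin{equation*}
	\frac{d}{dt}\tilde{S}_{N,l}(t)\overset{N\to\infty}{\sim}l(B^2-1)\sum_{k=0}^{l-2}\tilde{S}_{N,k}(t)\tilde{S}_{N,l-2-k}(t).
\end{equation*}
Induction on $l$ together with the assumed convergence of initial moments then produces limits $S_l(t)$ with $S_0\equiv 1$, $S_1(t)=S_1(0)$ and
\begin{equation*}
	S_l(t)=S_l(0)+l(B^2-1)\int_0^t\sum_{k=0}^{l-2}S_k(s)S_{l-2-k}(s)\,ds\quad(l\ge 2).
\end{equation*}

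This recurrence is the one derived in Section 2 of \cite{VW1} for the moments of $\mu\boxplus\mu_{sc,2\sqrt{ct}}$ at parameter $c=B^2-1$, and a Carleman-type bound $|S_l(t)|\le(Rl)^l$ obtained by induction as in \eqref{ode_semicirc_pf_eq1} guarantees that these moments determine the limiting measure uniquely. This identifies the limit as $\mu\boxplus\mu_{sc,2\sqrt{(B^2-1)t}}$ and completes the argument. The main obstacle is the sign flip in the linear term: in the noncompact ODE the coefficient of $\tilde{x}_i$ in the drift is $+(p+q)$ rather than $-(p+q)$, so the unrescaled moments grow exponentially rather than decaying. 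One must therefore verify that the scaling of $s_N$ is strong enough to kill this term after time rescaling, which is exactly ensured by the hypothesis $(p_N+q_N)/\sqrt{Ns_N}\to 0$ combined with the natural boundedness of $a_N$. The case $B=\infty$ does not literally fit the statement (the semicircle radius becomes infinite) and would require a separate analysis with a different spatial scaling.
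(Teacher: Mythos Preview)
Your approach is correct and coincides with the paper's proof: exploit that the RHS of \eqref{ODE_main-noncompact} is the negative of that of \eqref{ODE_main}, rerun the Section~2 computations and the argument of Theorem~\ref{semicirc_ODE_thm3} with the resulting sign changes, obtain the recurrence \eqref{recurrence-wigner-limit-noncompact} with $B^2-1$ in place of $1-B^2$, and conclude via the identification in \cite{VW1}.

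One point of your write-up is garbled and should be cleaned up. You claim that ``$a_N$ must stay bounded,'' but in fact $a_N\to\infty$: since $p_N+q_N>2(N-1)$, the hypothesis $(p_N+q_N)/\sqrt{Ns_N}\to 0$ forces $\sqrt{N/s_N}\to 0$, i.e.\ $a_N=\sqrt{s_N/N}\to\infty$. This is what you actually need, because the coefficients $N/s_N=1/a_N^2$ and $2b_N N a_N/s_N=2b_N/a_N$ in the rescaled recurrence (the analogues of the last two sums in \eqref{moment-ODE-wigner-2}) must vanish. The conclusion $(p_N+q_N)/s_N\to 0$ then follows from $(p_N+q_N)/s_N=\bigl[(p_N+q_N)/\sqrt{Ns_N}\bigr]\cdot a_N^{-1}$. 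Your observation about $B=\infty$ is apt; the statement as written does allow it but the limit is then degenerate.
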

	
\begin{proof} As the RHSs of (\ref{ODE_main-noncompact}) and  (\ref{ODE_main}) are equal up to a sign, the computations in Section 2 and 
in the proof of Theorem \ref{semicirc_ODE_thm3} imply that for  $l\ge0$ and $t\ge0$, the moments $\tilde S_{N,l}(t)$
of the empirical measures
$\mu_{N,t/s_N}$ converge for $N\to\infty$  to functions $S_l(t)$  which satisfy
\begin{equation}\label{recurrence-wigner-limit-noncompact}
  S_0\equiv1,\quad
		S_1(t)=S_1(0)\,,\quad
		S_l(t)=S_l(0)+l(B^2-1)\int_0^t\sum_{k=0}^{l-2}S_k(s)S_{l-2-k}(s)
		\,ds \quad(l\ge2).
                \end{equation}
The claim now follows in the same way as in Theorem \ref{semicirc_ODE_thm3}.
\end{proof}

The stationary local limit Theorem \ref{semicirc_ODE_thm}
does not seem to have a meaningful analogue in the noncompact setting, as 
 the assumptions on the $p_N, q_N, a_N, b_N$ in Theorem \ref{semicirc_ODE_thm}  imply that $b_N\in]-1,1[$ holds for all $N$
                  such that the rescaled empirical measures
measures for $t=0$ in the assumptions of Theorem \ref{semicirc_ODE_thm} cannot converge.

On the other hand, we have the following variants of the stationary Theorem \ref{MP_ODE_thm} as well as of the non-stationary Theorem
\ref{MP_ODE_thm2}
both of which involve Marchenko-Pastur distributions. Note that due to the time-inversion also the analogue to Theorem \ref{MP_ODE_thm} is now non-stationary:

\begin{theorem}\label{MP_ODE_thm_noncompact}
  Consider sequences $(p_N)_{N\in\mathbb{N}},(q_N)_{\mathbb{N}}\subset]0,\infty]$ with
    $$\lim_{N\to\infty}p_N/N=\infty \quad\text{ and}\quad  \lim_{N\to\infty}q_N/N=\hat{q}.$$
    Define
  $a_N:=p_N/N$, $b_N:=1$ $(N\in\mathbb{N})$.
  Let $\mu\in M^1([0,\infty[)$ be a probability measure such that its  moments $c_l$  satisfy
	$\lvert c_l\rvert\leq(\gamma l)^l$ for $l\in\mathbb {N}_0$ with some constant $\gamma>0$.
  Moreover, let $(x_N)_{N\in\mathbb {N}}$   be an associated sequence of starting vectors
$x_N\in C_N$  as in the  preceding limit results.
Let $x_N(t)$ be the solutions of the ODEs \eqref{ODE_main-noncompact} with start in $x_N(0)=x_N$ for $N\in\mathbb N, t\ge0$.
	Then for all $t>0$, all moments of the empirical measures
	\begin{equation*}
		\mu_{N,t/(p_N+q_N)}
		=\frac{1}{N}\sum_{i=1}^N \delta_{a_N( x_i^N(t/(p_N+q_N))-b_N)}\end{equation*}
tend to those of the probability measures
\begin{equation}\label{limit-measure-MP-case_noncompact}
			\mu(t)
				:=\left(\mu_{SC,2\sqrt{2(e^t-1})}
					\boxplus\left(\sqrt{e^{t}\mu}\right)_{\text{even}}\right)^2
					\boxplus \mu_{MP,\hat{q}-1,2(e^t-1)}\,,\quad t>0\,.
\end{equation}
\end{theorem}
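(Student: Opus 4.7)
The plan is to mimic the proof of Theorem \ref{MP_ODE_thm}, exploiting the key observation that the right-hand side of the noncompact ODE \eqref{ODE_main-noncompact} is the negative of the right-hand side of the compact ODE \eqref{ODE_main}. This sign flip induces a ``time-reversal'' in the limiting formulae, producing factors $e^{t}$ where the compact case had $e^{-t}$, and replacing $\hat p$ (which came from $p_N/N$) by $\hat q$ (which now comes from $q_N/N$ since the roles of $p_N$ and $q_N$ in the scalings are exchanged).

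First, following Section 2 mutatis mutandis for the noncompact ODE, I would derive moment recurrences analogous to \eqref{moment-ODE1}--\eqref{moment-ODE2} but with an overall sign flip on the right-hand side. With the specific scalings $a_N = p_N/N$, $b_N = 1$, and time-rescaling $t \mapsto t/(p_N+q_N)$, solving the resulting linear ODE in $\tilde S_{N,l}(t) := S_{N,l}(t/(p_N+q_N))$ yields an integral representation analogous to \eqref{moment-ODE-mp} but with integrating factor $e^{lt}$ instead of $e^{-lt}$. The crucial limiting coefficients are $p_N/(p_N+q_N) \to 1$ (since $p_N/N \to \infty$ dominates $q_N/N \to \hat q$) and $2p_Nq_N/(N(p_N+q_N)) \to 2\hat q$; induction on $l$ shows that $\tilde S_{N,l}(t)$ converges to limits $S_l(t)$ satisfying $S_0 \equiv 1$, $S_1(t) = e^t S_1(0) + 2\hat q(e^t - 1)$, and, for $l \ge 2$,
\begin{equation*}
S_l(t) = e^{lt}\Bigl[S_l(0) + 2l\int_0^t e^{-ls}\Bigl(\hat q\, S_{l-1}(s) + \sum_{k=0}^{l-2}S_k(s)S_{l-1-k}(s)\Bigr)\,ds\Bigr].
\end{equation*}
An induction argument analogous to \eqref{ode_semicirc_pf_eq1}, now with the bound $|S_l(t)| \le (Rl)^l e^{lRt}$ for some $R = R(\gamma, t)$, verifies the Carleman condition on every compact time interval, so the $(S_l(t))_l$ are the moments of a unique $\mu(t) \in M^1(\mathbb{R})$ and the empirical moments converge to these.

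To identify $\mu(t)$ I would pass to the Cauchy transform. The PDE \eqref{stieltjes-pde}, with its overall sign flipped and with the noncompact scalings inserted, yields in the limit
\begin{equation*}
G_t = -(z + 2(\hat q - 1) + 4zG)\,G_z - G - 2G^2, \qquad G(0, z) = G_\mu(z),
\end{equation*}
and the transformation rules \eqref{R-Cauchy} then translate this into the linear first-order PDE
\begin{equation*}
R_t - (z + 2z^2)\,R_z - (1 + 4z)\,R = 2\hat q, \qquad R(0, z) = R_\mu(z).
\end{equation*}
I would solve this by the method of characteristics: the ODE $dz/dt = -z(1+2z)$ integrates explicitly via the substitution $w = z/(1+2z)$, which evolves as $w(t) = w_0 e^{-t}$, and the resulting linear equation along characteristics leads to
\begin{equation*}
R(t, z) = \frac{e^t}{(1-2z(e^t-1))^2}\,R_\mu\!\left(\frac{e^t z}{1-2z(e^t-1)}\right) + \frac{2\hat q(e^t-1)}{1-2(e^t-1)z}.
\end{equation*}

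Finally, to identify the measure, split the last summand as $\tfrac{2(e^t-1)}{1-2(e^t-1)z} + \tfrac{2(\hat q - 1)(e^t-1)}{1-2(e^t-1)z}$. By \eqref{r-transform-mp} the second piece equals $R_{\mu_{MP,\hat q-1,\,2(e^t-1)}}(z)$, while the first piece combined with the $R_\mu$-term is, via Theorem 4.8 of \cite{VW1} and the R-transform scaling \eqref{R-scaling}, the R-transform of $\bigl(\mu_{SC,2\sqrt{2(e^t-1)}} \boxplus (\sqrt{e^t\mu})_{\text{even}}\bigr)^2$. By additivity of R-transforms, the identity \eqref{limit-measure-MP-case_noncompact} follows. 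The main obstacle is the characteristic computation itself, both in executing the two quadratures cleanly and in recognizing the resulting formula as the ansatz obtained from the compact MP solution under time-reversal; this requires careful sign tracking throughout the derivation of the PDE and in the passage from the $G$-PDE to the $R$-PDE.
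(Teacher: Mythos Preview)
Your proposal is correct and follows essentially the same approach as the paper's own proof: both exploit the sign flip between \eqref{ODE_main} and \eqref{ODE_main-noncompact} to obtain the moment recursion \eqref{recurrence-mp-limit_noncompact}, derive the same $G$- and $R$-PDEs, solve by characteristics to reach \eqref{MP_R_lim_noncompact}, and identify the limit via Theorem 4.8 of \cite{VW1}. Your explicit Carleman bound $|S_l(t)|\le (Rl)^l e^{lRt}$ is a sensible adaptation to the exponential growth in the noncompact setting, and your formula for $S_1(t)$ is in fact the correct one (the paper's displayed $S_1$ in \eqref{recurrence-mp-limit_noncompact} carries a sign slip, but this does not affect the subsequent identification).
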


\begin{proof}
	The proof is completely analogous to the one of Theorem \ref{MP_ODE_thm}. We just
	give the main steps.\\
	The moments $\tilde{S}_{N,l}(t)$ of the empirical measures $\mu_{N,t/(p_N+q_N)}$
	converge for $N\to\infty$ to functions $S_l(t)$ which satisfy
	\begin{equation}\label{recurrence-mp-limit_noncompact}
\begin{gathered}
  S_0\equiv1,\quad
		S_1(t)=e^{t}\left(S_1(0)-2\hat{q}\right)+2\hat{q}\,,\\
		S_l(t)
		=e^{lt}\left(S_l(0)+2l\int_0^te^{-ls}\left(\hat{q}S_{l-1}(s)+
			\sum_{k=0}^{l-2}S_k(s)S_{l-1-k}(s)\right)
		\,ds\right)\,,\;l\geq2\,.
\end{gathered}
\end{equation}
    Denote the Cauchy-transform of the limiting measure 
    $\mu_t:=\lim_{N\to\infty}\mu_{N,t/(p_N+q_N)}$ by
	\begin{equation*}
		G(t,z):=G_{\mu_t}(z)
		=\lim_{N\to\infty}G_{\mu_{N,t/(p_n+q_N)}}(z)\,.
	\end{equation*}
As for the PDEs \eqref{stieltjes-pde}, but with an additional minus sign, this leads to the PDE
	\begin{align}
	G_t(t,z)
		=&(-z-2(\hat{q}-1)-4zG(t,z))G_z(t,z)-G(t,z)-2G(t,z)^2\,.
	\end{align}
	Using \eqref{R-Cauchy}, we obtain for the R-transforms that
	\begin{equation*}
		 0=R_t(t,z)-(z+2z^2)R_z(t,z)-2\hat{q}-(4z+1)R(t,z)\,.
	\end{equation*}
	If we put $\phi(z):=R(0,z)$, the method of characteristics here leads to 
\begin{equation}\label{MP_R_lim_noncompact}
		R(t,z)
		=e^{t}(1-2z(e^t-1))^{-2}\phi(e^{t}z(1-2z(e^t-1))^{-1})
			+\frac{2(e^t-1)}{1-2z(e^t-1)}
			+\frac{2(\hat{q}-1)(e^t-1)}{1-2z(e^t-1)}\,.
\end{equation}
Finally if we set $\widehat{\phi}(z):=e^{s}\phi(e^{s}z)$ and 
\begin{equation*}
f(t,z):=(1-tz)^{-2}\widehat{\phi}\left(\frac{z}{1-tz}\right)+\frac{t}{1-tz}\quad(z\in\mathbb C\setminus \mathbb R, t>0)\,,
\end{equation*}
the claim now follows as in the proof of Theorem \ref{MP_ODE_thm}.
\end{proof}

The following result also follows in the same way by the methods of the proof of  Theorem \ref{MP_ODE_thm2}.

\begin{theorem}\label{MP_ODE_thm2_noncompact}
  Consider sequences $(p_N)_{N\in\mathbb{N}},(q_N)_{N\in\mathbb{N}}\subset]0,\infty]$ with $p_N,q_N>N-1$ 
  for $N\geq1$ and with $\lim_{N\to\infty}q_N/N=\hat{q}\in [1,\infty[$. Let  $(s_N)_{N\in \mathbb N}\subset]0,\infty[$ be a sequence of time scalings  with
    $\lim_{N\to\infty}(p_n+q_N)/s_N=0$.  Define the space scalings 
    $a_N:=s_N/N$, $b_N:=1$ ($N\in\mathbb{N}$).
       Let $\mu\in M^1([0,\infty[)$ be a starting  measure and  $(x_N)_{N\in\mathbb {N}}$
  associated   starting vectors as before.
Let $x_N(t)$ be the solutions of the ODEs \eqref{ODE_main-noncompact} with $x_N(0)=x_N$ for $N\in\mathbb N$.
Then for all $t>0$, all moments of the empirical measures
\begin{equation*}
		\mu_{N,t/s_N}
		=\frac{1}{N}\sum_{i=1}^N \delta_{a_N( x_i^N(t/s_N)-b_N)}
\end{equation*}
 tend to those of  $\left(\mu_{sc,2\sqrt{2t}}
					\boxplus\left(\sqrt{\mu}\right)_{\text{even}}\right)^2
					\boxplus \mu_{MP,\hat{q}-1,2t}$.
\end{theorem}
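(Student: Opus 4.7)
The plan is to mirror the proof of Theorem \ref{MP_ODE_thm2} with careful attention to the sign flip between the noncompact ODE \eqref{ODE_main-noncompact} and the compact ODE \eqref{ODE_main}. Concretely, if one repeats the computation of Section 2 starting from \eqref{ODE_main-noncompact} with the transformation $\tilde{x}_i^N := a_N(x_i^N - b_N)$, the resulting recursion for the empirical moments $S_{N,l}(t)$ is the negative of \eqref{moment-ODE1}, \eqref{moment-ODE2}. Equivalently, for the time-rescaled moments $\tilde S_{N,l}(t) := S_{N,l}(t/s_N)$ one obtains a system formally identical to \eqref{moment-ODE-MP-2} after substituting $b_N = 1$ and relabeling the coefficients produced by the sign flip.

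I would then insert the asymptotics of the theorem. Since $b_N \equiv 1$, all factors of $(1-b_N^2)$ vanish. Since $a_N = s_N/N$ and $(p_N+q_N)/s_N \to 0$ (which forces $s_N/N \to \infty$ because $p_N+q_N \geq 2(N-1)$), the coefficient of $\tilde S_{N,l}$ behaves as $l(p_N+q_N-(l-1))/s_N \to 0$, the coefficient of the double sum $\sum \tilde S_{N,k+1}\tilde S_{N,l-1-k}$ behaves as $lN/s_N \to 0$, and the coefficient of $\sum \tilde S_{N,k}\tilde S_{N,l-1-k}$ equals $2l a_N b_N N/s_N = 2l$. The remaining linear term collapses, using $b_N = 1$, to
\[
l\bigl(q_N - p_N + (p_N+q_N-2(l-1))\bigr)\frac{a_N}{s_N}\tilde S_{N,l-1} = l\bigl(2q_N-2(l-1)\bigr)\frac{1}{N}\tilde S_{N,l-1} \xrightarrow{N\to\infty} 2l\hat{q}\,\tilde S_{N,l-1}(t),
\]
so that in the limit
\[
\frac{d}{dt}\tilde S_{N,l}(t) \overset{N\to\infty}{\sim} 2l\hat{q}\,\tilde S_{N,l-1}(t) + 2l\sum_{k=0}^{l-2}\tilde S_{N,k}(t)\tilde S_{N,l-1-k}(t),
\]
which is precisely \eqref{moment-ODE-MP-2}. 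Induction on $l$, using the convergence of the initial moments, yields limits $S_l(t)$ satisfying the recursion \eqref{recurrence-MP-limit-2} with $\hat{p}$ replaced by $\hat{q}$.

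Finally, by the same Carleman-type bound as in the proof of Theorem \ref{MP_ODE_thm}, the $S_l(t)$ are moments of a unique probability measure $\mu_t$, so moment convergence implies weak convergence. Identification of $\mu_t$ proceeds via the Cauchy transform as in the compact case: the recursion \eqref{recurrence-MP-limit-2} translates into the PDE and then into an R-transform PDE whose solution, following Section 4 of \cite{VW1} (Lemma 4.3 and Theorem 4.8 there), yields
\[
\mu_t = \left(\mu_{sc,2\sqrt{2t}}\boxplus \bigl(\sqrt{\mu}\bigr)_{\text{even}}\right)^2 \boxplus \mu_{MP,\hat{q}-1,2t}.
\]
The only genuinely delicate step is the sign bookkeeping in the first paragraph; once the moment ODE is correctly written down, the remaining vanishing-terms analysis is routine and the identification step is cited verbatim from the compact analogue and \cite{VW1}.
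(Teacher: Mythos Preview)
Your proposal is correct and follows essentially the same approach as the paper, which simply states that the result ``follows in the same way by the methods of the proof of Theorem \ref{MP_ODE_thm2}.'' You supply precisely that argument in more detail: negate the moment recursion \eqref{moment-ODE1}--\eqref{moment-ODE2} to pass from \eqref{ODE_main} to \eqref{ODE_main-noncompact}, substitute $b_N=1$ and $a_N=s_N/N$, check term by term that the surviving limit recursion is \eqref{recurrence-MP-limit-2} with $\hat q$ in place of $\hat p$, and then invoke the identification from \cite{VW1} exactly as in Theorem \ref{MP_ODE_thm2}.
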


We finally mention that also the stochastic limit results from Section 4,
that correspond to the deterministic limit Theorems \ref{semicirc_ODE_thm-noncompact}-\ref{MP_ODE_thm2_noncompact},
can be transferred to the noncompact setting. We here skip the details.

\section{Appendix: Solutions of the differential equations with start on the singular boundary}
	
In this section we  prove Theorems \ref{main-solutions-ode} and \ref{main-solutions-ode-noncompact}.

We first study  the ODE (\ref{ODE_main})
which has the form
	\begin{equation}\label{ODE_main_app}
	\frac{d}{dt}x_i(t)
		=(p-q)-(p+q)x_i(t)
			+2\sum_{\substack{j:    j\neq i}}^N\frac{1-x_i(t)
			x_j(t)}{x_i(t)-x_j(t)}\,,\quad i=1,\dots,N.
	\end{equation}
        In order to prove parts of  Theorem \ref{main-solutions-ode},
        it is useful to interpret this ODE as a gradient system; see e.g. Section 9.4 of \cite{HiS}
          on the background.
          However, it can be easily checked that (\ref{ODE_main_app}) is not a gradient system.
          In order to obtain a gradient system, we use the transformation 
          $x_i=:\cos \tau_i$ with $\pi\ge \tau_1\ge\ldots\ge \tau_N\ge0$ which is motivated by
          the theory of Heckman-Opdam hypergeometric functions in \cite{HO, HS} in its trigonometric form (see also the introduction),
          and which is also useful 
in \cite{HV} for nice covariance matrices in some freezing central limit theorem. 
In fact, elementary calculus shows that (\ref{ODE_main_app}) is equivalent to the ODE
        \begin{equation}\label{trig-ode}
        \begin{split}
        \frac{d}{dt} \tau_i(t)={}&
           (q-p)\cot\left(\frac{\tau_i(t)}{2}\right)+2(p+1-N) \cot(\tau_i(t))\\
          &+\sum_{j: j\ne i}\left(\cot\left(\frac{\tau_i(t)-\tau_j(t)}{2}\right)+
          \cot\left(\frac{\tau_i(t)+\tau_j(t)}{2}\right)\right)
          \end{split}
          \end{equation}
        for $i=1,\ldots,N$ which is a gradient system.
        In fact, if 
        $V(\tau):= \ln \tilde V(\tau)$ with
    \begin{equation}\label{potential}      \tilde V(\tau):=\left(\prod_{i=1}^N \sin(\tau_i/2)\right)^{2(q-p)}\cdot
\left(\prod_{i=1}^N \sin(\tau_i)\right)^{2(p+1-N)}\cdot\prod_{i,j: \> i<j} \left(\sin\left(\frac{\tau_i-\tau_j}{2}\right)\sin\left(\frac{\tau_i+\tau_j}{2}\right)\right)^2,
 \end{equation}
    then (\ref{trig-ode}) has the form $	\frac{d}{dt} \tau(t)=\text{grad} \> V( \tau(t))$ with $\tau=(\tau_1,\ldots,\tau_N)$.

    We next search for a maximum of the potential $V$, i.e., of $\tilde V$.
    For this we observe  that, with some constant $C$,
    $$\tilde V(\tau)= C\cdot \prod_{i=1}^N((1-x_i)^{q+1-N} (1+x_i)^{p+1-N}) \cdot\prod_{i,j: \> i<j} (x_i-x_j)^2.$$
    A classical result of Stieltjes (see Section 6.7  of \cite{S}) now shows that for $\pi> \tau_1>\ldots> \tau_N>0$,
    this expression has a unique maximum for $x=z$ where  the vector $z$ consists of the ordered roots of the Jacobi polynomial
    $P_N^{(q-N,p-N)}$. Therefore, Section 9.4 of \cite{HiS} yields the following part of Theorem 
    \ref{main-solutions-ode}:

	\begin{lemma}\label{ODE_lem_app}
		Let $N\in\mathbb{N}$ and $p,q> N-1$.
		For each $x_0\in\inte A_N$ the ODE \eqref{ODE_main_app} has a unique 
		solution $x(t)$
		with $x(t)\in\inte A_N$ for all $t\geq0$.
Moreover, $\lim_{t\to\infty}x(t)=z$ where $z\in\inte A_N$ is the 
		vector consisting of the ordered roots of  $P_N^{(q-N,p-N)}$.
                \end{lemma}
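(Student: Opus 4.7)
My plan combines three ingredients: (i) Picard--Lindelöf for local existence on the open interior, (ii) the gradient-flow structure in the trigonometric coordinates $\tau_i = \arccos x_i$ recorded in \eqref{trig-ode} and \eqref{potential}, and (iii) the classical Stieltjes theorem identifying the unique interior critical point of $\tilde V$.

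First I would observe that the RHS of \eqref{ODE_main_app} is smooth throughout $\inte A_N$, since each denominator $x_i - x_j$ is nonvanishing there, so Picard--Lindelöf gives a unique local solution $x(t)$ issued from any $x_0 \in \inte A_N$. The map $(\tau_1,\ldots,\tau_N)\mapsto(\cos\tau_1,\ldots,\cos\tau_N)$ is a smooth diffeomorphism from $\inte\tilde A_N$ onto $\inte A_N$, and, as noted in the excerpt, it conjugates \eqref{ODE_main_app} to \eqref{trig-ode}, which is exactly the gradient ascent flow $\dot\tau = \nabla V(\tau)$ of $V := \log\tilde V$.

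Next I would confine the trajectory to $\inte A_N$ and obtain global existence simultaneously by a Lyapunov argument. Along the flow, $\frac{d}{dt}V(\tau(t)) = \lVert\nabla V(\tau(t))\rVert^2 \ge 0$, so $V(\tau(t)) \ge V(\tau(0))=:v_0$. Because the parameter conditions $p,q>N-1$ give positive exponents $q{+}1{-}N,\,p{+}1{-}N,\,2>0$ in \eqref{potential}, every factor of $\tilde V$ vanishes on a corresponding face of $\partial\tilde A_N$, and hence $V(\tau)\to-\infty$ as $\tau$ approaches \emph{any} boundary face. Therefore the sublevel set $\{V\ge v_0\}$ is a compact subset of $\inte\tilde A_N$ in which the trajectory is trapped; this yields both global existence for $t\ge0$ and the required interior confinement $x(t)\in\inte A_N$.

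Finally, for the asymptotics I would invoke LaSalle's invariance principle: since the orbit is precompact and $V$ is constant on the $\omega$-limit set by the Lyapunov inequality, $\omega(\tau(0))$ consists entirely of critical points of $V$. Stieltjes' classical theorem (Szegő, Ch.~6.7) applied to $\tilde V$ in the form written just above the lemma states that $\tilde V$ admits a \emph{unique} critical point in $\inte A_N$, namely the vector $z$ of ordered roots of $P_N^{(q-N,p-N)}$; pulling back through $x=\cos\tau$ gives the same uniqueness for $V$ in $\inte\tilde A_N$. Since $\omega(\tau(0))$ is non-empty, connected, and contained in this singleton, $\tau(t)$ converges, hence $x(t)\to z$. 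The main obstacle is really just the second step: one has to check that $V$ is a \emph{proper} Lyapunov function that blows up on every face of $\partial\tilde A_N$ (collision faces as well as the hard walls $x_i=\pm1$), which is exactly where the hypotheses $p,q>N-1$ are used; once this trapping is in place, LaSalle combined with Stieltjes' uniqueness delivers the conclusion essentially for free.
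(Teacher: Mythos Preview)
Your argument is correct and is essentially the paper's own: gradient structure in the trigonometric coordinates, Stieltjes' uniqueness of the interior critical point, and convergence of the gradient flow (the paper packages your Lyapunov/LaSalle step into a single citation of Hirsch--Smale, \S9.4). One small wording point: the exponents $q{+}1{-}N,\,p{+}1{-}N$ you quote belong to the $x$-form of $\tilde V$ displayed just \emph{after} \eqref{potential}, not to \eqref{potential} itself (whose first exponent $2(q-p)$ can be negative); however, after combining the $\sin(\tau_i/2)$ and $\sin\tau_i$ factors near $\tau_i=0$ your boundary blow-up claim $V\to-\infty$ is of course correct.
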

	
In order to complete the proof of Theorem 
\ref{main-solutions-ode}, we still have to prove the following theorem. Its proof is an adaptation of the corresponding results for
          the Hermite- and Laguerre case in \cite{VW2}.

\begin{theorem}\label{theorem-appendix}
		Let $N\in\mathbb{N}$ and $p,q>N-1$.
		For each $x_0\in \partial A_N$ the ODE \eqref{ODE_main_app} has a unique 
		solution $x(t)$ for all $t\geq0$ in the following sense: For each $x_0\in\partial A_N$
		there is a continuous function $x:[0,\infty)\to A_N$ with $x(0)=x_0$ such that
		$x(t)\in\inte A_N$ for all $t>0$ and
		$x\colon(0,\infty)\to\inte A_N$ satisfies \eqref{ODE_main_app}.
		Moreover, 
		$\lim_{t\to\infty}x(t)=z$ with $z\in A_N$ as above.
\end{theorem}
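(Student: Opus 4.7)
The plan is to construct the solution as a locally uniform limit of the interior-start solutions from Lemma \ref{ODE_lem_app}, and to establish uniqueness via a Gronwall-type one-sided Lipschitz estimate, paralleling the adaptation used in \cite{VW2} for the Hermite and Laguerre cases.

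\emph{Construction.} Pick $x_0^{(n)}\in\inte A_N$ with $x_0^{(n)}\to x_0\in\partial A_N$, and let $x^{(n)}:[0,\infty)\to\inte A_N$ be the corresponding unique global solutions. In the trigonometric coordinates $\tau_i=\arccos x_i$ the ODE becomes the gradient flow $\dot\tau=\nabla V(\tau)$ for $V=\log\tilde V$ from \eqref{potential}, with $V\to-\infty$ on $\partial\tilde A_N$. The key a priori estimate I would establish is a uniform separation bound: for every $0<\varepsilon<T$ there is $\delta=\delta(\varepsilon,T)>0$, independent of $n$, with
\begin{equation*}
	x_j^{(n)}(t)-x_i^{(n)}(t)\ge\delta\ (i<j)\qquad\text{and}\qquad 1\pm x_i^{(n)}(t)\ge\delta\qquad(t\in[\varepsilon,T]).
\end{equation*}
The underlying mechanism is twofold: the pair-repulsion contributes $4(1-x_ix_j)/(x_j-x_i)\ge 0$ to $\dot{(x_j-x_i)}$, pushing coordinates apart, while the linear drift at the endpoints takes the values $-2(q-N+1)<0$ at $x_i=1$ and $+2(p-N+1)>0$ at $x_i=-1$, pushing coordinates away from $\{\pm 1\}$. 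A simultaneous scalar comparison for the minimum pairwise gap and for $1\pm x_i^{(n)}$ then produces $\delta$. With this separation the RHS of \eqref{ODE_main_app} is uniformly bounded on $[\varepsilon,T]$, $(x^{(n)})$ is equi-Lipschitz there, and a diagonal Arzelà-Ascoli extraction yields a locally uniform limit $x:(0,\infty)\to\inte A_N$ solving \eqref{ODE_main_app}. Continuity at $t=0$ follows by integrating the ODE on $[0,t]$, bounding the linear part and exploiting the definite sign of the singular terms to show $|x^{(n)}(t)-x_0^{(n)}|\to 0$ as $t\to 0$ uniformly in $n$, so that $x(t)\to x_0$.

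\emph{Uniqueness and the limit $t\to\infty$.} For two such solutions $x,y$ I would differentiate $\phi(t):=\sum_i(x_i(t)-y_i(t))^2$ along \eqref{ODE_main_app}. Splitting $(1-x_ix_j)/(x_i-x_j)=1/(x_i-x_j)-x_ix_j/(x_i-x_j)$ and pairing $(i,j)$-terms, the Dyson piece gives the standard non-positive contribution
$-\sum_{i<j}\bigl((x_i-y_i)-(x_j-y_j)\bigr)^{2}/[(x_i-x_j)(y_i-y_j)]\le 0$
(the denominator is positive by the ordering); the remaining Jacobi-specific piece, bounded via $|x_ix_j|\le 1$ and a pairing which cancels the singular denominator against the ordering, contributes a bounded multiple of $\phi$, and the linear drift contributes $-(p+q)\phi$. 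Gronwall with $\phi(0)=0$ thus forces $x\equiv y$. The asymptotic behaviour $\lim_{t\to\infty}x(t)=z$ then follows by applying Lemma \ref{ODE_lem_app} to $x(\cdot+\varepsilon)$ with interior initial datum $x(\varepsilon)\in\inte A_N$ for any fixed $\varepsilon>0$.

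\emph{Main obstacle.} The decisive technical step is the uniform separation estimate, because $\delta(\varepsilon,T)$ has to be independent of how close the approximating initial points $x_0^{(n)}$ sit to $\partial A_N$. Setting up a clean scalar comparison simultaneously for all pairwise gaps and boundary distances is delicate, since the singular drift on any one gap couples to the other coordinates; I expect to handle this by a minimum-over-pairs argument that uses only the positivity of each singular contribution together with the uniform boundedness of the regular part of the vector field on $A_N$. A closely related subtlety is keeping the $O(\phi)$ bound in the uniqueness step uniform up to $\partial A_N$, which should follow from the same pairing structure as above.
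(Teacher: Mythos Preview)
Your approach differs fundamentally from the paper's, and while the compactness-plus-Gronwall strategy is natural, it has a concrete gap and misses the structural simplification that drives the paper's argument.

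The paper does not use any separation estimate or Arzel\`a--Ascoli extraction. Instead it passes to elementary symmetric polynomial coordinates $e_k^N(x)$ and observes that in these coordinates the system becomes \emph{linear} and lower-triangular: $\frac{d}{dt}e_k^N(x(t))$ depends affinely on $e_1^N,\dots,e_k^N$ only, with explicit solutions that are finite linear combinations of decaying exponentials $e^{rt}$, $r\le0$. Existence, uniqueness and continuous dependence on $x_0\in A_N$ are then immediate. To show the solution leaves $\partial A_N$ instantly, the paper uses the discriminant $D(x)=\prod_i(1-x_i^2)\prod_{i\neq j}(x_j-x_i)$, which is a polynomial $\tilde D$ in the $e_k^N$: since $\tilde D(\tilde e(t))$ is itself a finite exponential sum, if it vanished on an interval it would vanish for all $t\ge0$, contradicting $\lim_{t\to\infty}\tilde e(t)=e(z)\in e(\inte A_N)$.

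Your uniqueness step contains a specific error. After pairing, the ``Jacobi-specific'' piece does \emph{not} reduce to a bounded multiple of $\phi$. Writing $a=x_i-y_i$, $b=x_j-y_j$, the paired contribution for $(i,j)$ works out to
\[
-(1-x_ix_j)\,\frac{(a-b)^2}{(x_i-x_j)(y_i-y_j)}\;-\;\frac{(a-b)(bx_i+ax_j-ab)}{y_i-y_j},
\]
and the second summand still carries the singular factor $1/(y_i-y_j)$, which does not cancel against anything. You therefore obtain only $\phi'\le C(t)\phi$ with $C(t)\sim\sum_{i<j}|y_i(t)-y_j(t)|^{-1}$, blowing up as $t\to0^+$ whenever the initial datum has a collision. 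Salvaging Gronwall then requires a quantitative rate $|y_i(t)-y_j(t)|\gtrsim t^{1/2}$ uniform over all colliding clusters and over possible contact with $\pm1$ --- precisely the separation estimate you already flag as the main obstacle and leave unproved. Thus both your existence and your uniqueness arguments hinge on that one unestablished bound; the paper's linearization via symmetric polynomials avoids it entirely.
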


	\begin{proof} We  use of the elementary symmetric polynomials $e_n^m$
	($n=0,\dots,m$) in $m$ variables which are characterized by 
	\begin{equation*}
		\prod_{j=1}^m(z-x_j)=\sum_{j=0}^m(-1)^{m-j}e_{m-j}^m(x)z^j\,,
			\quad z\in\mathbb{C},\quad x=(x_1,\dots,x_m)\,.\end{equation*}
	  Consider the map $e:A_N\to\mathbb{R}^N$, $e(x)=(e_1^N(x),\dots,e_N^N(x))$. Then
		$e:A_N\to e(A_N)$ is a homeomorphism, and
		$e:\inte A_N\to e(\inte A_N)$ is a diffeomorphism.
		We will use  the following notation: Let $x\in\mathbb{R}^N$ and 
		$S\subseteq\{1,\dots,N\}$ a nonempty set. Denote by
		$x_S\in\mathbb{R}^{\lvert S\rvert}$ the vector with coordinates $x_i$, $i\in S$,
		in the natural ordering on $S$. With this convention we have
		\begin{equation*}
			\sum_{i=1}^Ne_{k-1}^{N-1}(x_{\{1,\dots,N\}\setminus\{i\}})=(N-k+1)e_{k-1}^N(x)\,,
			\quad
			\sum_{i=1}^ne_{k-1}^{N-1}(x_{\{1,\dots,N\}\setminus\{i\}})x_i
			=ke_k^N(x),
		\end{equation*}
		and 
		\begin{equation*}
			e_{k-1}^{N-1}(x_{\{1,\dots,N\}\setminus\{i\}})
			-e_{k-1}^{N-1}(x_{\{1,\dots,N\}\setminus\{j\}})
			=-(x_i-x_j)e_{k-2}^{N-2}(x_{\{1,\dots,N\}\setminus\{i,j\}})\,.
		\end{equation*}
		Hence
		\begin{equation*}
		\begin{split}
			\sum_{i,j=1: \> i\neq j}
				\frac{e_{k-1}^{N-1}(x_{\{1,\dots,N\}\setminus\{i\}})}{x_i-x_j}
			&=\sum_{{i,j=1: \> i<j}}
				\frac{e_{k-1}^{N-1}(x_{\{1,\dots,N\}\setminus\{i\}})
				-e_{k-1}^{N-1}(x_{\{1,\dots,N\}\setminus\{j\}})}{x_i-x_j}\\
			&=-\sum_{i,j: \> i<j}e_{k-2}^{N-2}(x_{\{1,\dots,N\}\setminus\{i,j\}})
				=-\frac{(N-k+2)(N-k+1)}{2}e_{k-2}^N(x)
		\end{split}
		\end{equation*}
		and
		\begin{equation*}
		\begin{split}
			\sum_{i,j=1: \> i\neq j}
				\frac{e_{k-1}^{N-1}(x_{\{1,\dots,N\}\setminus\{i\}})x_ix_j}{x_i-x_j}
			&=\sum_{i,j=1: \> i<j}
				\frac{e_{k-1}^{N-1}(x_{\{1,\dots,N\}\setminus\{i\}})
				-e_{k-1}^{N-1}(x_{\{1,\dots,N\}\setminus\{j\}})}{x_i-x_j}x_ix_j\\
			&=-\sum_{i,j=1: \> i<j}e_{k-2}^{N-2}(x_{\{1,\dots,N\}\setminus\{i,j\}})
				x_ix_j
			=-\frac{k(k-1)}{2}e_k^N(x)\,.
		\end{split}
		\end{equation*}
		By transforming \eqref{ODE_main_app} with the homeomorphism $e$ we  get the ODEs
		\begin{align}\label{transformed_ODE}
			\frac{d}{dt}e_1^N(x(t))
			=&\sum_{i=1}^N\frac{d}{dt}x_i(t)
			=N(p-q)-(p+q)e_1^N(x(t))\,,\notag\\
			\frac{d}{dt}e_k^N(x(t))
			=&\sum_{i=1}^Ne_{k-1}^{N-1}(x_{\{1,\dots,N\}\setminus\{i\}}(t))
				\left((p-q)-(p+q)x_i(t)
				+2\sum_{{j=1: \> j\neq i}}
				\frac{1-x_i(t)x_j(t)}{x_i(t)-x_j(t)}\right)\notag\\
			={}&k(-(p+q)+k-1)e_k^N(x(t))+(N-k+1)(p-q)e_{k-1}^N(x(t))\notag\\
				&-(N-k+2)(N-k+1)e_{k-2}^N(x(t))\,,\quad k\in\{2,\dots,N\}\,.
		\end{align}
		These are linear differential equations of the type $f'(t)=\lambda f(t)+g(t)$ with the solutions
		$f(t)=e^{\lambda t}\left(f(0)+\int_0^te^{-\lambda s}g(s)\,ds\right)$.
		Thus,
                \begin{align}\label{solutions-symmetric}
		e_1^N(x(t))&=e^{-(p+q)t}\left(e_1^N(x_0)-N\frac{p-q}{p+q}\right)+N\frac{p-q}{p+q} \quad\text{and}\notag\\
			e_k^N(x(t))&=
			e^{c_kt}\left(\vphantom{\int_0^1}e_k^N(x_0)\right.\\
				&\left.+\int_0^te^{-c_ks}\left((N-k+1)(p-q)e_{k-1}^N(x(s))
				-(N-k+2)(N-k+1)e_{k-2}^N(x(s))\right)\,ds\right)\,,
		\notag\end{align}
		where $c_k=k(-(p+q)+k-1)<0$, $k\in\{2,\dots,N\}$.
		By induction we see  that each $e_k^N(x(t))$ is a linear combination of terms of
		the form $e^{rt}$, $r\leq0$. Thus the limits
		$\hat{e}_k:=\lim_{t\to\infty}e_k^N(t)$ exist. We  claim that  $\hat{e}=e(z)$ holds.
                To prove this we observe from the limit assertion in Lemma \ref{ODE_lem_app} that this holds for all
starting points  $x_0\in\inte A_N$. Furthermore, as  $\hat{e}$ depends continuously on  $x_0$ by (\ref{solutions-symmetric}), we obtain
$\hat{e}=e(z)$ also for   $x_0\in\partial A_N$.

		We now turn to the case $x_0\in\partial A_N$. Clearly, as $e$
		is injective there exists at most one solution of \eqref{ODE_main_app}.
		For the existence of a solution we claim that the inverse mapping of $e$
		transforms solutions of \eqref{transformed_ODE} back into solutions of
		\eqref{ODE_main_app} in the sense of the theorem. For this we prove that for any
		starting point $x_0\in\partial A_N$ in \eqref{ODE_main} and its image
		$e(x_0)$ the solution $\tilde{e}(t)$, $t\geq0$, of the ODEs 
		\eqref{transformed_ODE} with $\tilde{e}(0)=e(x_0)$ satisfies
		$\tilde{e}(t)\in e\left(\inte A_N\right)$ for all $t>0$. If this is
		shown it follows that the preimage of $(\tilde{e}(t))_{t\geq0}$ under $e$ solves
		\eqref{ODE_main_app}.
                
		To prove this, we recapitulate that for each starting point in 
		$e\left(\inte A_n\right)$ the solution $\tilde{e}$ of  \eqref{transformed_ODE} satisfies
		$\tilde{e}(t)\in e\left(\inte A_n\right)$ for all $t\geq0$, and
		that for all fixed $t\geq0$ the solutions $\tilde{e}(t)$ depend continuously 
		on arbitrary starting points in $\mathbb{R}^N$ by a classical result on ODEs. 
		Hence, for each starting point $\tilde{e}(0)\in e(A_N)$ we have
		$\tilde{e}(t)\in e(A_N)$ for $t\geq0$.
                
		Assume that there is a starting point $x_0\in\partial A_N$ and some $t_0>0$
		such that the solution $(\tilde{e}(t))_{t\geq0}$ of \eqref{transformed_ODE}
		with start at $e(x_0)$ satisfies
		\begin{equation}\label{thm_app_eq1}
			\tilde{e}(t)\notin e\left(\inte A_n\right)\,,\quad t\in[0,t_0]\,.
		\end{equation}
		For $x=(x_1,\dots,x_n)\in\mathbb{R}^N$ we define the
		discriminant
		\begin{equation}\label{discriminant}
			D(x):=\prod_{i=1}^N(1-x_i^2)\cdot
				\prod_{\substack{i,j=1\\i\neq j}}^N(x_j-x_i)\,.
		\end{equation}
		$D$ is a symmetric polynomial in $x_1,\dots,x_N$ and thus, by a classical result
		on elementary polynomials, a polynomial $\tilde{D}$ in $e_1^N(x),\dots,e_N^N(x)$.
		By \eqref{thm_app_eq1} we thus  deduce
		\begin{equation*}
			\tilde{e}(t)\in e(\partial A_N)\subseteq 
			Y:=\{y\in\mathbb{R}^N\colon\,\tilde{D}(y)=0\}\,,\quad t\in[0,t_0]\,.
		\end{equation*}
		We obtain that
		$\tilde{D}(\tilde{e}(t))=0$ for $t\in[0,t_0]$. As $\tilde{D}(\tilde{e}(t))$
		is a linear combination of terms of the form $e^{rt}$ with $r\leq0$ it follows
		that $\tilde{D}(\tilde{e}(t))=0$ for all $t\geq0$. As
		$Y\cap e\left(\inte A_n\right)=\emptyset$, we conclude that
		$\tilde{e}(t)\notin e\left(\inte A_n\right)$ for all $t\geq0$.
		But this is a contradiction to
		$\lim_{t\to\infty}\tilde{e}(t)=e(z)\in e\left(\inte A_n\right)$.
		Hence  $\tilde{e}(t)\in e\left(\inte A_n\right)$ for  $t>0$ as claimed. This completes the proof.
	\end{proof}

We finally turn to the proof of Theorem    \ref{main-solutions-ode-noncompact} on the ODEs (\ref{ODE_main-noncompact}).
We proceed as in the proof of    Theorem      \ref{main-solutions-ode} and notice first that the
transform $x_i=\cosh \tau_i$ ($i=1,\ldots,N$) transform the ODEs  (\ref{ODE_main-noncompact}) again into some gradient system.
As for Lemma \ref{ODE_lem_app}, we thus obtain:

\begin{lemma}\label{ODE_lem_app-nonvompact}
		Let $N\in\mathbb{N}$ and $p,q> N-1$.
		For each $x_0\in\inte C_N$ the ODE \eqref{ODE_main-noncompact} has a unique 
		solution $x(t)$
		with $x(t)\in\inte C_N$ for  $t\geq0$.
\end{lemma}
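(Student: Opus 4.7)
The plan is to adapt the proof of Lemma \ref{ODE_lem_app} to the noncompact setting by combining two ingredients from the appendix: the hyperbolic substitution that reveals a gradient structure (used to prevent the trajectory from approaching the finite boundary of $\inte C_N$) and the elementary-symmetric-polynomial linearization from the proof of Theorem \ref{theorem-appendix} (used to rule out escape to infinity in finite time).

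First, I would verify, using the hyperbolic identities
$$\coth\!\left(\tfrac{\tau_i-\tau_j}{2}\right)+\coth\!\left(\tfrac{\tau_i+\tau_j}{2}\right)=\frac{2\sinh\tau_i}{\cosh\tau_i-\cosh\tau_j},\qquad \coth(\tau/2)+\tanh(\tau/2)=2\coth\tau,$$
that the substitution $x_i=\cosh\tau_i$ with $0<\tau_1<\cdots<\tau_N$ turns \eqref{ODE_main-noncompact} into the equivalent system
$$\dot\tau_i=(q-p)\coth(\tau_i/2)+2(p+1-N)\coth\tau_i+\sum_{j\ne i}\!\left[\coth\!\left(\tfrac{\tau_i-\tau_j}{2}\right)+\coth\!\left(\tfrac{\tau_i+\tau_j}{2}\right)\right],$$
which is the gradient flow $\dot\tau=\mathrm{grad}\,V$ of $V=\ln\tilde V$ with
$$\tilde V(\tau)=\prod_{i=1}^N\sinh(\tau_i/2)^{2(q-p)}\prod_{i=1}^N(\sinh\tau_i)^{2(p+1-N)}\prod_{i<j}\bigl(\sinh(\tfrac{\tau_j-\tau_i}{2})\sinh(\tfrac{\tau_i+\tau_j}{2})\bigr)^{2}.$$

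Second, since $p,q>N-1$, the combined exponent $2(q+1-N)$ near $\tau_1=0$ is positive, so $\tilde V\to 0$ near the face $\tau_1=0$; similarly, $\tilde V$ vanishes whenever $\tau_i=\tau_{i+1}$. Hence $V\to-\infty$ on the entire finite boundary of $\tilde C_N^\circ:=\{0<\tau_1<\cdots<\tau_N\}$. As $V$ is non-decreasing along the gradient flow, $V(\tau(t))\ge V(\tau(0))$ forces $\tau(t)$ to stay bounded away from this finite boundary; by standard gradient-system theory (Section 9.4 of \cite{HiS}) this yields local existence and uniqueness in the interior.

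Third—and this is the main obstacle specific to the noncompact case—the state space is unbounded and $\tilde V\to+\infty$ in the escaping direction, so one must rule out finite-time blow-up of $\tau_i(t)$. For this I would replay the elementary-symmetric-polynomial computation from the proof of Theorem \ref{theorem-appendix}: the identical combinatorial identities applied to \eqref{ODE_main-noncompact} yield the triangular linear system
\begin{align*}
\tfrac{d}{dt}e_1^N(x(t))&=N(q-p)+(q+p)e_1^N(x(t)),\\
\tfrac{d}{dt}e_k^N(x(t))&=k\bigl((p+q)-(k-1)\bigr)e_k^N+(N-k+1)(q-p)e_{k-1}^N+(N-k+2)(N-k+1)e_{k-2}^N
\end{align*}
for $k=2,\dots,N$ (the only difference from the compact case is a global sign flip of the right-hand side of the ODE, so the linear coefficients change signs accordingly while the antisymmetric cancellation of the interaction is preserved). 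Its solutions are finite sums of exponentials, hence bounded on every interval $[0,T]$. Consequently the coefficients of the monic polynomial $\prod_i(z-x_i(t))$ are bounded on $[0,T]$, which bounds its roots $x_i(t)$, and hence $\tau_i(t)$. Combined with step two, the trajectory is confined to a compact subset of $\tilde C_N^\circ$ on $[0,T]$, so the standard continuation criterion extends the solution to all $t\ge0$; uniqueness is immediate from smoothness of the vector field on $\inte C_N$. The essential point is that the gradient structure alone, which closes the argument in the compact case, does not preclude escape to $+\infty$, and the symmetric-polynomial linearization is what supplies the missing a priori bound.
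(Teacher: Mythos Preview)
Your proof is correct and follows essentially the same approach as the paper. The paper invokes the gradient structure (just before stating the lemma) to handle the finite boundary and then, inside the proof, uses the elementary-symmetric-polynomial linearization to rule out finite-time blow-up; you carry out both steps with more detail, in particular making explicit why $V\to-\infty$ on the finite boundary forces the level-set confinement, and you correctly note that the sign flip between \eqref{ODE_main} and \eqref{ODE_main-noncompact} simply reverses the signs in the linear system for the $e_k^N$.
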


\begin{proof} We only have to check that the system is not explosive in finite time. For this we again use the elementary symmetric polynomials $e_n^m$
	as well as the homeomorphism $e:C_N\to e(C_N)\subset\mathbb{R}^N$ with $e(x)=(e_1^N(x),\dots,e_N^N(x))$ as in the proof 
 of Theorem \ref{theorem-appendix}.
 As the right hand sides of the ODEs (\ref{ODE_main}) and (\ref{ODE_main-noncompact}) are equal up to a sign change,
 we conclude from the computations  in the proof 
 of Theorem \ref{theorem-appendix} (see in particular (\ref{solutions-symmetric})) that
                \begin{align}\label{solutions-symmetric-noncompact}
		e_1^N(x(t))&=e^{(p+q)t}\left(e_1^N(x_0)-N\frac{p-q}{p+q}\right)+N\frac{p-q}{p+q},\notag\\
			e_k^N(x(t))&=
			e^{c_kt}\left(\vphantom{\int_0^1}e_k^N(x_0)\right.\\
				&\left.+\int_0^te^{-c_ks}\left((N-k+1)(p-q)e_{k-1}^N(x(s))
				-(N-k+2)(N-k+1)e_{k-2}^N(x(s))\right)\,ds\right)\,,
		\notag\end{align}
                with $c_k=k((p+q)+1-k)<0$ for $k=2,\dots,N$. In summary, $e(x(t))$
                satisfies some linear ODE and exists thus for all $t\ge0$. The claim now follows by a transfer back to $\inte C_N$.
\end{proof}

To complete the proof of Theorem 
\ref{main-solutions-ode-noncompact}, we prove the following analogue of Theorem \ref{theorem-appendix}.

\begin{theorem}\label{theorem-appendix-noncompact}
		Let $N\in\mathbb{N}$ and $p,q>N-1$.
		For each starting value $x_0\in \partial A_N$ the ODE  (\ref{ODE_main-noncompact}) has a unique 
		solution $x(t)$ for  $t\geq0$ in the sense as described in Theorem \ref{theorem-appendix}.
\end{theorem}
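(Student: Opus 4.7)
The plan is to follow the template of Theorem \ref{theorem-appendix} (the compact case), making the necessary adjustments for the noncompact setting. The key tool is the homeomorphism $e\colon C_N\to e(C_N)\subset\mathbb{R}^N$ given by the elementary symmetric polynomials $e(x)=(e_1^N(x),\dots,e_N^N(x))$, which is a diffeomorphism on interiors and transforms \eqref{ODE_main-noncompact} into the linear system whose solutions are given by \eqref{solutions-symmetric-noncompact}. This linear system admits global solutions on all of $\mathbb{R}$ for arbitrary initial data in $\mathbb{R}^N$. For $x_0\in\partial C_N$, let $\tilde e(t)$ denote the unique solution of this linear system with $\tilde e(0)=e(x_0)$.

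First I would show $\tilde e(t)\in e(C_N)$ for all $t\geq 0$, so that $x(t):=e^{-1}(\tilde e(t))\in C_N$ is well defined. Approximating $x_0$ by interior points $x_0^{(\varepsilon)}\in\inte C_N$, the corresponding solutions $\tilde e^{(\varepsilon)}(t)$ remain in $e(\inte C_N)$ by Lemma \ref{ODE_lem_app-nonvompact} and converge locally uniformly to $\tilde e(t)$ by continuous dependence of the linear ODE on initial data. Since $e$ is proper on $C_N$ (the coordinate $e_N^N(x)=\prod_i x_i$ forces unbounded images for unbounded sequences in $C_N$), $e(C_N)$ is closed in $\mathbb{R}^N$, hence $\tilde e(t)\in e(C_N)$.

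The crux is to show $x(t)\in\inte C_N$ for $t>0$. Introduce the noncompact discriminant $D^{nc}(x):=\prod_{i=1}^N(x_i^2-1)\prod_{i<j}(x_j-x_i)^2$, a symmetric polynomial strictly positive on $\inte C_N$ and vanishing exactly on $\partial C_N\cap C_N$, and write $D^{nc}=\tilde D^{nc}\circ e$ for some polynomial $\tilde D^{nc}$ on $\mathbb{R}^N$. Assume for contradiction that $\tilde e(t)\in e(\partial C_N)$ for all $t\in[0,t_0]$ for some $t_0>0$; then $\psi(t):=\tilde D^{nc}(\tilde e(t))$ vanishes on $[0,t_0]$. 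By \eqref{solutions-symmetric-noncompact} each component $e_k^N(\tilde e(t))$ is a quasi-polynomial in $t$ (a linear combination of exponentials $e^{c_k t}$ with $c_k=k((p+q)+1-k)>0$ together with a constant term), so $\psi$ is itself a quasi-polynomial and must vanish identically on $[0,\infty)$.

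The main obstacle is deriving the contradiction in the noncompact regime, as we lack an interior $\omega$-limit point (which in the compact case furnished $\tilde e(t)\to e(z)\in e(\inte A_N)$ with $\tilde D(e(z))>0$). My plan is to analyze the asymptotics as $t\to\infty$: the linear system is lower-triangular with distinct positive eigenvalues $c_1<\dots<c_N$ and admits a unique equilibrium $y^*\in\mathbb{R}^N$. Crucially $y^*\notin e(C_N)$, because the compact and noncompact right-hand sides differ only by a sign, so $y^*=e(z)$ where $z$ consists of the zeros of $P_N^{(q-N,p-N)}$ in $(-1,1)$, whence $e_1^N(z)<N\leq e_1^N(x_0)$ for any $x_0\in C_N$. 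Hence the eigenexpansion $\tilde e(t)=y^*+\sum_k\gamma_k v_k e^{c_k t}$ has a nontrivial exponential part, and $\tilde e(t)\sim\gamma_{k^*}v_{k^*}e^{c_{k^*}t}$ for $k^*$ the largest index with $\gamma_{k^*}\neq 0$. The technical heart is to check that plugging this asymptotic into the top weighted-homogeneous part of $\tilde D^{nc}$ (coming from $\prod_i x_i^2$ times the Vandermonde squared) yields a strictly positive leading exponential coefficient for $\psi(t)$, contradicting $\psi\equiv 0$. Uniqueness then follows from the injectivity of $e$ and standard ODE uniqueness on $\inte C_N$.
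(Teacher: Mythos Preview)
Your overall framework---transforming via the elementary symmetric polynomials $e$, passing to the linear system \eqref{solutions-symmetric-noncompact}, introducing the discriminant $\tilde D^{nc}$, and using that $\psi(t)=\tilde D^{nc}(\tilde e(t))$ is an exponential polynomial which vanishes identically once it vanishes on an interval---coincides with the paper. The divergence is at the contradiction step, and there the paper's route is both simpler and avoids the gap in yours.

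The paper does not analyze $t\to+\infty$. It exploits the observation you yourself note but then discard: the right-hand sides of \eqref{ODE_main} and \eqref{ODE_main-noncompact} differ only by a sign, so the linear systems \eqref{solutions-symmetric} and \eqref{solutions-symmetric-noncompact} are related by $t\mapsto -t$. Since the compact linear system has all eigenvalues $c_k=k(-(p+q)+k-1)<0$, \emph{every} trajectory in $\mathbb{R}^N$---regardless of whether the initial point lies in $e(A_N)$---converges to the unique equilibrium $e(z)$, with $z$ the ordered zeros of $P_N^{(q-N,p-N)}$. Hence $\lim_{t\to-\infty}\tilde e(t)=e(z)$, and since $z\in\inte A_N$ one has $D(z)\neq 0$. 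This immediately contradicts $\psi\equiv 0$ on $\mathbb{R}$; no asymptotic analysis at $+\infty$ is needed.

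Your forward-time plan has a genuine gap at what you call the ``technical heart.'' The assertion that the top weighted-homogeneous part of $\tilde D^{nc}$ evaluated along the dominant eigenmode yields a strictly positive leading exponential coefficient is neither verified nor routine. The components $\tilde e_k(t)$ grow at \emph{different} rates $c_k$, so the dominant monomial of $\tilde D^{nc}(\tilde e(t))$ is not determined by a single weighted-degree count but by the full pattern of which $\gamma_j$ vanish---and that pattern depends on $x_0$. Already for $N=2$ one sees the difficulty: writing $\tilde D^{nc}(e_1,e_2)=(e_2^2+2e_2-e_1^2+1)(e_1^2-4e_2)$, if $\gamma_2\neq 0$ the leading term of $\psi$ is $\gamma_1^2\gamma_2^2 e^{(2c_1+2c_2)t}>0$, but if $\gamma_2=0$ it is $\gamma_1^4\bigl(((p-q)/(p+q-2))^2-1\bigr)e^{4c_1 t}$, which is \emph{negative}. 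One can still extract a contradiction in that subcase (using $\psi\ge 0$ and $\gamma_1>0$), but the argument is no longer the one you sketched, and for general $N$ the case analysis proliferates. The time-reversal trick bypasses all of this in one line.
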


\begin{proof} We use the notatons of the proof of Lemma \ref{ODE_lem_app-nonvompact} and
 consider some starting point $x_0\in\partial C_N$. 
		For the existence of a solution we claim that the inverse mapping of $e$
		transforms the functions in (\ref{solutions-symmetric-noncompact}) back into solutions of
		(\ref{ODE_main-noncompact})
                in the sense of the theorem, i.e.,
                that $\tilde{e}(t):=(e_1^N(x(t)),\ldots,e_N^N(x(t))\in e\left(\inte A_N\right)$ holds for all $t>0$.
                To prove this, we have to check that	$\tilde{e}(t)\notin e(\partial A_N)$ for $t>0$.

		Assume that for some $x_0\in\partial A_N$ and  $t_0>0$
		we have 
		$\tilde{e}(t)\notin e\left(\inte A_n\right)$ for $ t\in[0,t_0]$.
                We now use the	discriminant $D$ from (\ref{discriminant}) as well as $\tilde D$ there. We
                conclude from the corresponding  methods in
 in the proof of Theorem \ref{theorem-appendix} that $\tilde{D}(\tilde{e}(t))=0$ for $t\in[0,t_0]$ implies that
$\tilde{D}(\tilde{e}(t))=0$ for all $t\in\mathbb R$.
We now recapitulate that the solutions (\ref{solutions-symmetric-noncompact}) and (\ref{solutions-symmetric})
of the corresponding ODEs are equal up to the transform $t\mapsto -t$ for equal starting points  $\tilde{e}(0)$, and that
these solutions obviously depend analytically from  $\tilde{e}(0)$. We thus conclude from 
 the limit assertion in Lemma \ref{ODE_lem_app} that  $\lim_{t\to-\infty}	\tilde{e}(t)=e(z)$ holds where $D(z)\ne0$ holds.
 As this is a contradiction to $\tilde{D}(\tilde{e}(t))=0$ for  $t\in\mathbb R$, the theorem follows from Lemma \ref{ODE_lem_app-nonvompact}.
	\end{proof}

\end{document}